\documentclass[12pt]{article}
\usepackage{amsmath, amsfonts, amsthm}
\usepackage{xcolor}
\usepackage{tikz}
\usepackage{tkz-euclide}   
\usetikzlibrary{shapes.misc}
\usepackage{enumitem}
\tikzset{cross/.style={cross out, draw=black, minimum size=2*(#1-\pgflinewidth), inner sep=0pt, outer sep=0pt},
cross/.default={1pt}}
\numberwithin{equation}{section}

\setcounter{MaxMatrixCols}{10}

\usepackage{geometry}
\geometry{hmargin=1.5cm, vmargin=1.5cm}

\newtheorem{theorem}{Theorem}[section]

\newtheorem{lemma}[theorem]{Lemma}

\newtheorem{proposition}[theorem]{Proposition}
\renewenvironment{proof}[1][Proof]{\noindent\textbf{#1.} }{\ \rule{0.5em}{0.5em}}
\theoremstyle{definition}
\newtheorem{definition}{Definition}[section]
\newtheorem{remark}[definition]{Remark}
\newtheorem{example}[definition]{Example}


\def\defn#1{{\bf\itshape #1}}

\definecolor{cadmiumgreen}{rgb}{0.0, 0.42, 0.24}
  \definecolor{burgundy}{rgb}{0.5, 0.0, 0.13}
\usepackage{hyperref}
\hypersetup{
    colorlinks=true,
    linkcolor=burgundy,
    filecolor=magenta,      
    urlcolor=burgundy,
    citecolor=cadmiumgreen
}

\usepackage{graphicx,wrapfig}
\usepackage[font={small,sf, color=black!80}]{caption}

\newcommand{\E}{E_{\mbox{\tiny red}}^{n+N}}
\begin{document}

\title{Braids of the $N$-body problem II: carousel solutions \\by cabling central configurations}
\author{M. Fontaine and C. Garc\'ia-Azpeitia}
\maketitle
\begin{center}
\begin{minipage}[]{14cm}
	\small \textbf{Abstract.} We prove the existence of relative periodic solutions of the planar $N=\sum_{j=1}^n k_j$-body problem starting with $n$ bodies moving close to a non-degenerate central configuration and replacing each of them with clusters of $k_j$ bodies that move close to a small central configuration. We name these solutions carousel solutions. The proof relies on blow-up techniques for variational methods used in our previous work \cite{Braids}.
	
\vspace{0.2cm}

\vspace{0.2cm}
\noindent \textbf{Keywords.} $N$-body problem, periodic solutions, perturbation theory.
\end{minipage} 
\end{center}
\section{Introduction}

The existence of braids in the $N$-body problem has been intensively studied
since the pioneering work of Poincar{\' e}. In the case of strong forces,
the classical approach exploits the fact that the Euler action functional
blows up at any orbit belonging to the boundary of a braid class, which
implies the existence of minimizers for tied braid classes by using the
direct method of the calculus of variations, see \cite{Go2}, \cite{Mon1} and
references therein. Later on, C. Moore in \cite{Mo93} found braids for
gravitational and weak forces by making numerical continuations of the
aforementioned braids for strong forces.

For gravitational forces, the existence of braids in the $3$-body problem ($%
N=2+1$) was established in the classical setting of the Sun-Earth-Moon
system. These solutions are obtained by replacing one body with two in a
circular motion of the $2$-body problem. There is a large literature related
to this problem \cite{4}, \cite{8}, \cite{2}, \cite{3}, \cite{Con}. An
extension of these solutions to the $4$-body problem ($N=3+1$), where one
body in the Lagrange triangular configuration is replaced by two, was
obtained in \cite{Ch}. This result was then extended in \cite{Meyer} to the
general case $N=n+1$, where one body in a nondegenerate central
configuration is replaced by two, making use of symplectic scalings and the
Implicit Function Theorem.

In our previous work \cite{Braids}, we established a new approach based on
blow-up techniques to construct solutions of the $N=n+1$-body problem in $E=%
\mathbb{R}^{2d}$ for central forces (weak, gravitational and strong), where
one body in a nondegenerate central configuration of $n$ bodies is replaced
by two. Along with such a solution, two of the bodies rotate uniformly around their
center of mass. The other $n-1$ bodies and the center of mass of the pair
remain, at each time, close to a central configuration. When $d=1$, these
solutions are \defn{braids} obtained by replacing a strand in a braid by
a new braid, a process that was called \defn{cabling} by C. Moore in \cite{Mo93}.

In the present article, we generalize the construction in \cite{Braids} to
replace several bodies in a non-degenerate central configuration by clusters
of bodies arranged themselves in small central configurations. We name these
new solutions \defn{carousel solutions} (Figure \ref{fig:carousel}). Due to
the higher level of complexity that involves dealing with multiple clusters,
we only treat the case when the motion takes place in the plane ($d=1$). One
may consider symmetry constraints as in \cite{Braids} to possibly extend these
results to higher even-dimensional spaces.

\begin{figure}[th]
\centering
\begin{minipage}{.76\textwidth}
\centering
\includegraphics[scale=0.25]{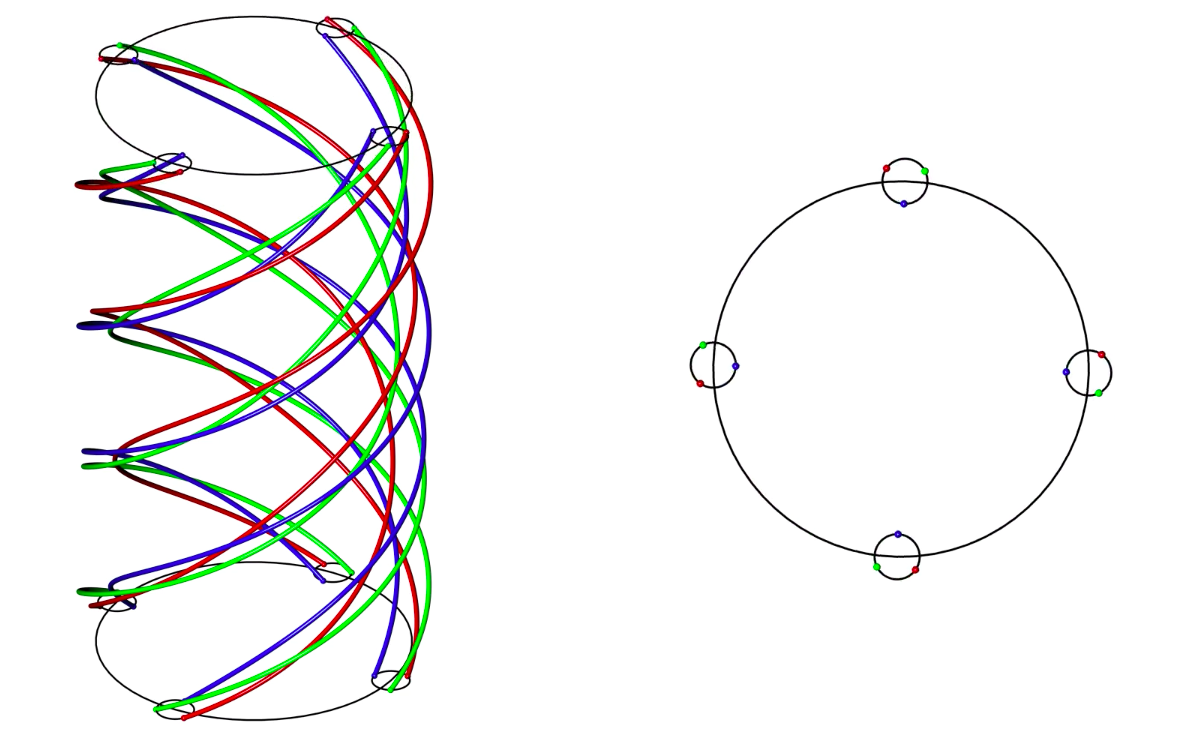}
\caption{Carousel solutions of the $12$-body problem. The four bodies in a polygonal configuration are replaced by clusters of three bodies of equal masses arranged at the vertices of an equilateral triangle (Lagrange triangular configuration).}
\label{fig:carousel}
\end{minipage}
\end{figure}

\subsection*{Model: the $N$-body and $N$-vortex filament problems}

We first introduce a multi-index notation to describe the positions of the
bodies in their respective cluster. The positions are given by vectors $%
q_{j,k}$ in $E=\mathbb{R}^{2}$ for $k=1,\dots,k_{j}$ and $j=1,\dots,n$. To
each $q_{j,k}$ we attach a positive mass $m_{j,k}>0$. The index $j$
represents the cluster that contains $k_{j}$ bodies. Setting $%
N=\sum_{j=1}^{n}k_{j}$, the equations of motion of the $N$-body problem are 
\begin{equation}
m_{j,k}\ddot{q}_{j,k}=-\sum_{\substack{ (j^{\prime},k^{\prime}) \\ \left(
j^{\prime},k^{\prime}\right) \neq\left( j,k\right) }}m_{j,k}m_{j^{\prime
},k^{\prime}}\frac{q_{j,k}-q_{j^{\prime},k^{\prime}}}{\Vert
q_{j,k}-q_{j^{\prime},k^{\prime}}\Vert^{\alpha+1}}\qquad
k=1,\dots,k_{j}\qquad j=1,\dots,n.   \label{NBP}
\end{equation}
Without loss of generality, we suppose that $k_{j}>1$ for $j=1,\dots,n_{0}$
and $k_{j}=1$ for $j=n_{0}+1,\dots,n$. That is, the $j$-cluster contains
only one body when $j=n_{0}+1,\dots,n$.

The relevant cases from the physical point of view are the gravitational
potential ($\alpha=2$) and the logarithmic potential ($\alpha=1$). In the
latter case, equations \eqref{NBP} govern the approximate interaction of $N$
steady vortex filaments in fluids (Euler equation) \cite{KPV03}, \cite%
{BaMi12}, \cite{KMD95}, \cite{GaCr15}, Bose-Einstein condensates
(Gross--Pitaevskii equation) and superconductors (Ginzburg-Landau equation) 
\cite{DelPK08}, \cite{ContrerasJerrard16}. Although  it is worth mentioning that there are some subtle differences
in the equations for steady vortex filaments with respect to equations (\ref%
{NBP}. Namely, the role of the masses in the
models of filament are played by the circulations, which can be negative,
quantized or weighted by different factors representing the vortex core of
filaments.

\subsection*{Main result: existence of carousel solutions}

We construct carousel solutions starting from multiple central
configurations. For $j=1,\dots,n_{0}$ the $j$-cluster is close to a %
\defn{central configuration} of $k_{j}$-bodies $a_{j}=(a_{j,1},\dots
,a_{j,k_{j}})$ such that 
\begin{equation}
m_{j,k}a_{j,k}=\sum_{\substack{ k^{\prime}=1 \\ k^{\prime}\neq k}}%
^{k_{j}}m_{j,k}m_{j,k^{\prime}}\frac{a_{j,k}-a_{j,k^{\prime}}}{\left\Vert
a_{j,k}-a_{j,k^{\prime}}\right\Vert ^{\alpha+1}}\qquad k=1,\dots,k_{j}~. 
\label{ccj}
\end{equation}
We assume that each central configuration $a_{j}$ has zero center of mass.
For $j=n_{0}+1,\dots,n$ we assume that the $j$-clusters are made of a single
body.
\begin{figure}[th]
\centering
\includegraphics[scale=0.7]{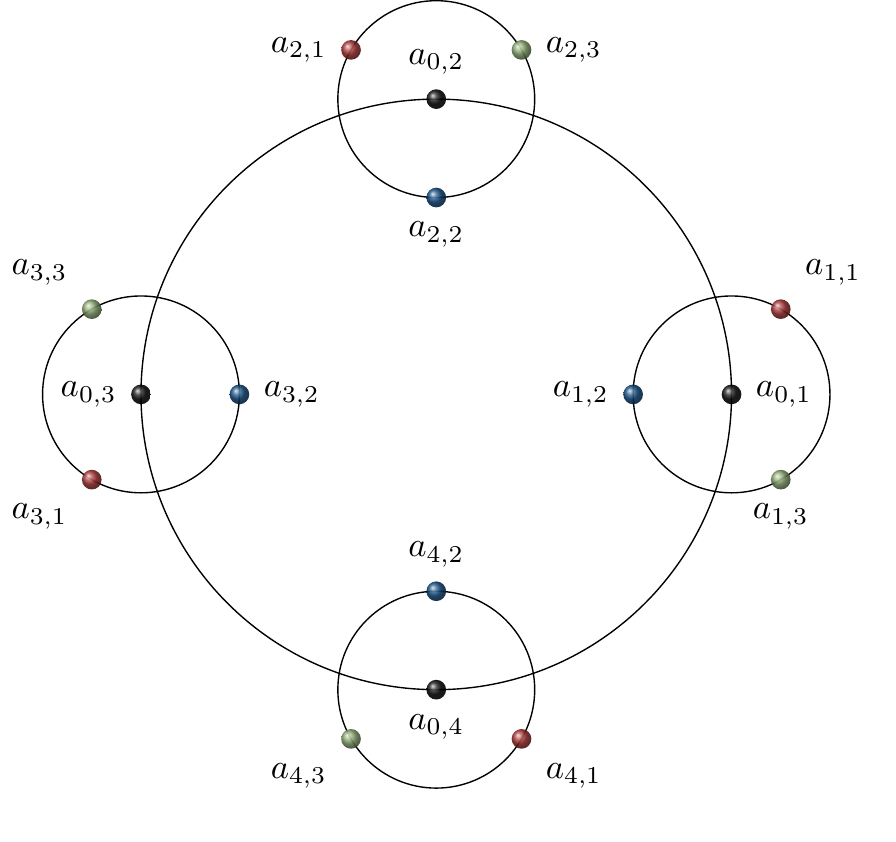}
\end{figure}
Our main result states that we can replace $n_{0}$ bodies in a central
configuration of $n$-bodies by clusters of $k_{j}$-bodies close to the
central configurations $a_{j}$. For this purpose, we start from a central
configuration $a_{0}=(a_{0,1},\dots,a_{0,n})$ with masses $%
M_{j}=\sum_{k=1}^{k_{j}}m_{j,k}$ given by the total masses of the $j$%
-clusters for $j=1,...,n_0$. This means that 
\begin{equation}
M_{j}a_{0,j}=\sum_{\substack{ j^{\prime}=1 \\ j^{\prime}\neq j}}%
^{n}M_{j}M_{j^{\prime}}\frac{a_{0,j}-a_{0,j^{\prime}}}{\left\Vert
a_{0,j}-a_{0,j^{\prime}}\right\Vert ^{\alpha+1}}\qquad j=1,\dots,n_0. 
\label{cc0}
\end{equation}
We also assume, without loss of generality, that the central configuration $%
a_{0}$ has zero center of mass.

In Theorem \ref{main result}, we construct the \defn{carousel solutions} of %
\eqref{NBP} starting from $n_{0}$ central configurations $%
a_{1},\dots,a_{n_{0}}$ satisfying \eqref{ccj} and $a_{0}$ satisfying %
\eqref{cc0}. We suppose that $a_{j}$ are $2\pi p_{j}$-nondegenerate for $p_{1},\dots,p_{n_{0}}\in\mathbb{Z}%
\backslash\{0\}$ fixed integers (Definition \ref{Def1}),
and also that $a_{0}$ is nondegenerate (Definition \ref%
{Def3}). We then prove that, for every sufficiently small $%
\varepsilon$, there are at least $n_{0}+1$ solutions of the $N$-body problem %
\eqref{NBP} with components of the form%
\begin{align}
q_{j,k}(t) & =\exp(tJ)u_{0,j}(\nu t)+r_{j}\exp(t\omega_{j}J)u_{j,k}(\nu
t),\quad j=1,\dots,n_{0},\quad k=1,\dots,k_{j}  \label{carousel solutions} \\
q_{j,1}(t) & =\exp(tJ)u_{0,j}(\nu t),\quad\;j=n_{0}+1,\dots,n.  \notag
\end{align}
The matrix $J$ is a complex structure on $E$. The paths $(u_{0,1}(s),%
\dots,u_{0,n}(s))$ and $(u_{j,1}(s),\dots,u_{j,k_{j}}(s))$ remain $%
\varepsilon$-close in a space of $2\pi$-periodic paths to the central
configurations $a_{0}$ and $a_{j}$ for $j=1,...,n_{0}$ respectively. The
amplitudes and the frequencies of rotation of the clusters 
\begin{equation*}
r_{j}=\left( 1+p_{j}\nu\right) ^{-2/(\alpha+1)}\qquad\mbox{and}\qquad
\omega_{j}=1+p_{j}\nu 
\end{equation*}
are controlled uniformly by the parameter $\varepsilon$, where 
\begin{equation*}
\nu=\varepsilon^{-\left( \alpha+1\right) /2}-1 
\end{equation*}
represents the frequency of the perturbation from the arrangement of the
central configurations in rigid motion. Furthermore, we show that the solutions are
determined by different orientating phases $\vartheta_{j}\in S^{1}$ such
that $u_{j,k}(s)=\exp(\vartheta_{j}J)a_{j,k}+\mathcal{O}(\varepsilon)$ for $%
j=1,...,n_{0}$. Thus the result can be rephrased as 
\begin{align}
q_{j,k}(t) & =\exp(tJ)a_{0,j}+r_{j}\exp((t\omega_{j}+\vartheta_{j})J)a_{j,k}+%
\mathcal{O}(\varepsilon)\ \quad j=1,\dots,n_{0},\quad k=1,\dots,k_{j} \\
q_{j,1}(t) & =\exp(tJ)a_{0,j}+\mathcal{O}(\varepsilon)\quad\;j=n_{0}+1,%
\dots,n,  \notag
\end{align}
where $\mathcal{O}(\varepsilon)$ are quasi-periodic functions of order $%
\varepsilon$. Thus the parameter $\varepsilon $ provides a uniform measure of the shrink of
each cluster because the amplitudes $r_{j}=\mathcal{O}(\varepsilon )$.

The solutions that we obtain are quasi-periodic if $\nu\notin\mathbb{Q}$. If
the frequency $\nu=p/q$ is rational, the frequencies $\omega_{j}=(q+p_{j}p)/q
$ are rational as well. Our theorem implies for this case that, for any fixed
integer $q>0$, there is some integer $p_{0}>0$ such that, for each integer $%
p>p_{0}$, the components $q_{j,k}(t)$ are $2\pi q$-periodic. These are %
\defn{braid solutions}, where the centers of mass of the $n$ clusters (close
to the central configuration $a_{0}$) wind around the origin $q$ times,
while the configuration of $k_{j}$ bodies in each cluster winds around its
center of mass $q+p_{j}p$ times (Figure \ref{fig:carousel}). The sign of $%
p_{j}$ determines whether the $j$-cluster has a \emph{prograde} or a
retrograde rotation with respect to the whole system. When the rotation is
prograde ($p_{j}>0$), the cluster rotates in the same direction as the main
relative equilibrium. When the rotation is \emph{retrograde} ($p_{j}<0$),
the cluster rotates in the opposite direction.

Our proof relies on the assumption that the central configurations $a_{j}$
for $j=1,\dots ,n_{0}$ are $2\pi p_{j}$-nondegenerate. In simple terms, the $%
2\pi p_{j}$-nondegeneracy condition means that the group orbit of $a_{j}$ is
a nondegenerate critical manifold for the action functional of the $k_{j}$-body problem
defined in the space of $2\pi p_{j}$-periodic paths (Definition \ref{Def1}).
In Section 4 we prove that this condition holds for an infinite number of $%
k_{j}$-polygonal configurations with equal masses when $\alpha \in
(1,1+\delta )$ for a positive small $\delta $. In the case $\alpha \geq 1$
(but $\alpha \neq 2$) we show that this condition holds for the Lagrange
triangular configuration with different masses provided they satisfy the
relation 
\begin{equation}
\beta :=27\frac{m_{1}m_{2}+m_{1}m_{3}+m_{2}m_{3}}{\left(
m_{1}+m_{2}+m_{3}\right) ^{2}}>9\left( \frac{3-\alpha }{1+\alpha }\right)
^{2}.  \label{ineq}
\end{equation}%
We can therefore always replace any body in a nondegenerate central
configuration by regular $k_{j}$-polygons. We conjecture that the $2\pi p_{j}
$-nondegeneracy condition is generic for central configurations in the case $%
\alpha \geq 1$ ($\alpha \neq 2$).

When $\alpha=2$, all the central configurations are $2\pi p_{j}$-degenerate
due to the existence of the elliptic homographic solutions which are
generated by a central configuration. This implies that our result cannot be
directly extended to the case $\alpha=2$. In Theorem \ref{main result2} we
extend our result to the gravitational case under two additional
assumptions: (i) we can divide only a central body in a central
configuration which is symmetric under $2\pi/m$-rotations with $m\geq2$ and
(ii) the central configuration $a_{1}$ of the 1-cluster needs to be $2\pi/m$%
-nondegenerate (Definition \ref{Def2}). There are plenty of central
configurations satisfying the first assumption such as the Maxwell
configuration \cite{Ro00}, the nested polygonal configurations with a center
appearing in \cite{GaIz11} \cite{Mont15} \cite{Webs}. Regarding the second assumption, we prove in
Section 4 that the $2\pi/m$-nondegeneracy condition of a central
configuration holds for the $k_{1}$-polygonal configurations with equal
masses for $k_{1}=4,...,1000$. We show that this condition also holds true
in the case of the Lagrange triangular configuration when the masses satisfy
the inequality (\ref{ineq}) with $\alpha=2$. Therefore, we can always
replace the central body in a $\ $symmetric central configuration by a
regular $k_{1}$-polygon. We conjecture that the $2\pi/m$-nondegeneracy
condition is generic for central configurations.

\subsection*{Method: perturbation of nondegenerate critical manifolds}

Our method starts by writing down the Euler-Lagrange equations with respect
to the action functional $\mathcal{A}$ of the $N$-body problem. We implement
several changes of coordinates in configuration space that involve
Jacobi-like coordinates, rotating frames and scalings of the amplitudes of
the clusters. We extend the action functional $\mathcal{A}$ to new
coordinates 
\begin{equation*}
u=(u_{0},u_{1},...,u_{n})\in E^{n}\times E^{k_{1}}\times ...\times E^{k_{n}},
\end{equation*}%
where the action functional splits into two terms $\mathcal{A}(u)\mathcal{=A}%
_{0}(u)+\mathcal{H}(u)$, and the Euler-Lagrange equations of $\mathcal{A}_{0}(u)$
are uncoupled in the components $u_{j}$ and are given simply by the $k_{j}$%
-body problem in $u_{j}$ for $j=1,\dots ,n_{0}$ and the $n$-body problem for 
$u_{0}$. The action of the $N$-body problem corresponds to the restriction of 
$\mathcal{A}(u)$ to the subspaces $E_{j}$ defined in \eqref{EJ E0} as the subspace of zero center of mass of the $k_{j}$-body problem.
In particular, we have the convention that $u_{j}=0\in E_{j}$ for the
clusters $j=n_{0}+1,\dots ,n$. The \defn{coupling term} $\mathcal{H=O}%
(\varepsilon )$ encodes the interactions of the
different clusters in the new coordinates, which depends on $\varepsilon $ through the parameters $%
r_{j},\omega _{j},\nu $ introduced before and is small of order $\varepsilon 
$. 

The Euler-Lagrange equations of $\mathcal{A}$ are set as a gradient in a
subspace of $2\pi $-periodic paths denoted by $X$. We fix a configuration 
\begin{equation*}
u_{a}=(a_{0},a_{1},\dots ,a_{n})
\end{equation*}%
where $a_{0}$ is a central configuration \eqref{cc0} of the $n$-body
problem, $a_{j}$ is a central configuration \eqref{ccj} of the $k_{j}$-body
problem for $j=1,\dots ,n_{0}$ and, according to our convention, $a_{j}=0$
for $j=n_{0}+1,...,n$. It turns out that the configuration $u_{a}$ is a
critical point of $\mathcal{A}_{0}$. Furthermore, the functional $\mathcal{A}%
_{0}(u)$ is invariant under the action of the torus group $G=U(1)^{n+1}$ on $%
X$ defined by 
\begin{equation*}
(g_{0},g_{1},\dots ,g_{n})\cdot (u_{0},u_{1}\dots
,u_{n})=(g_{0}u_{0},g_{1}u_{1},\dots ,g_{n}u_{n}).
\end{equation*}%
where $(g_{0},g_{1},\dots ,g_{n})\in U(1)^{n+1}$. The action of $g_{0}$
rotates the $n$-body problem consisting of the $n$ centers of mass of the
clusters about the origin. The action of each $g_{j}$ rotates the
configuration of the $j$-cluster about its center of mass for $j=1,...,n_{0}$
and acts trivially on $u_{j}=0$ for $j=n_{0}+1,...,n$.

It follows that the group orbit $G(u_{a})$ is a critical manifold of $%
\mathcal{A}_{0}$. The perturbation $\mathcal{H=O}(\varepsilon )$ breaks the $%
G$-symmetry, in the sense that it is invariant under the action of the
diagonal subgroup $H=\widetilde{U(1)}$ of $G$. Our theorem is obtained by
proving the persistence of $H$-orbits of critical points of the perturbed
action $\mathcal{A}=\mathcal{A}_{0}+\mathcal{H}$ in a tubular neighbourhood
of the critical manifold $G(u_{a})$ in $X$. The core of the proof (Section $3$) relies on three Lyapunov-Schmidt
reductions: a reduction to finite dimension, a reduction that regularizes
the functional, and a reduction that uses Palais-slice coordinates near the
orbit $U(1)^{n}(a_{1},...,a_{n})$. We conclude that
finding critical points of $\mathcal{A}$ in a neighbourhood of $G(u_{a})$ is
equivalent to doing so for some regular function 
\begin{equation*}
\Psi _{\varepsilon }^{\prime }:U(1)^{n_{0}}\rightarrow \mathbb{R}
\end{equation*}
defined on the compact manifold $U(1)^{n_{0}}$ which represents the orbit of 
$(a_{1},...,a_{n})$ under the action of the group $U(1)^{n}$. A major
difference regarding our previous work \cite{Braids} is that to obtain $%
\Psi _{\varepsilon }^{\prime }$ we need to solve first the component $u_{0}$%
. The delicate part of the procedure is finding uniform estimates in $%
\varepsilon $ because the functional $\mathcal{A}_{0}$ explodes when $\varepsilon \rightarrow 0$  at different
scales. The theorem is easily obtained as
a consequence of the fact that the Lyusternik-Schnirelmann category of the
compact manifold $U(1)^{n_{0}}$ is $n_{0}+1$, which gives a lower bound for
the number of critical points of $\Psi _{\varepsilon }^{\prime }$. To each
of these critical points corresponds a different way of orienting the
central configurations $a_{j}$ for $j=1,...,n_{0}$ with respect to the
central configuration $a_{0}$. Generically, from Morse theory, the functional has in fact $2^{n_0}$ critical points.

It is worth mentioning that many authors have analyzed the persistence of
solutions near a nondegenerate critical manifold under perturbation. In our
case, the critical manifold of $\mathcal{A}_{0}$ is the group orbit $G(u_{a})
$ and the perturbation $\mathcal{H}$ is $H$-invariant. The perturbation of
nondegenerate critical manifolds consisting of group orbits has been studied
previously in \cite{Dan}, \cite{Van}, \cite{Lau}, \cite{Chos}, \cite{Am2}, 
\cite{Fo}, and references therein. In the context of the $n$-body problem,
the papers \cite{Mo} and \cite{Am1} analyze the breaking of symmetries of
the critical manifold of periodic solutions of the Kepler problem when a
non-radial external force is introduced. Our work uses ideas already
appearing in all those works, although a remarkable difference is that, in
our case, the functional $\mathcal{A}_{0}$ explodes as $\varepsilon
\rightarrow 0$ at different scales. We solve this problem by means of a
procedure that is similar to our previous work \cite{Braids} and was
motivated by the blow-up methods appearing in \cite{Ba16} and \cite{Ba17}.

Blow-up techniques have a long history. For instance, they were used by Floer
and Weinstein in \cite{Flo} to find single-soliton standing waves of the
nonlinear Schr{\" o}dinger equation in dimension one. The critical
manifold in that case is non-compact due to the action of the
group $\mathbb{R}$. Later on, these ideas were used to obtain multi-bump
solitons for Schr{\" o}dinger equations in higher dimension - the interested reader
can consult the large bibliography in \cite{Am3}. We think that it is possible
to use similar methods to extend our results to the gravitational case by
taking into consideration the non-compact critical manifold of all the elliptic
homographic solutions generated by the central configurations.

\section{Problem setting for carousels}

Let $E=\mathbb{R}^{2}$ with inner product $\langle \cdot ,\cdot \rangle $.
We consider $N=\sum_{j=1}^{n}k_{j}$ bodies moving in $E$ under the influence
of a central force field. We use a multi-index notation which simplifies
greatly the treatment of the problem. The positions of the bodies in $E$ are
denoted by $q_{j,k}$ where $k=1,\dots ,k_{j}$ and $j=1,\dots ,n.$ To each of
the positions we attach positive masses $m_{j,k}>0$. The index $j$
represents the cluster of bodies that contains $k_{j}$ bodies.

We define the kinetic energy and the potential function by 
\begin{eqnarray*}
K &=&\frac{1}{2}\sum_{j=1}^{n}\sum_{k=1}^{k_{j}}m_{j,k}\Vert \dot{q}%
_{j,k}\Vert ^{2} \\
U &=&\frac{1}{2}\sum_{\substack{ (j^{\prime },k^{\prime }),\left( j,k\right) 
\\ \left( j^{\prime },k^{\prime }\right) \neq \left( j,k\right) }}%
m_{j,k}m_{j^{\prime },k^{\prime }}\phi _{\alpha }(\Vert q_{j,k}-q_{j^{\prime
},k^{\prime }}\Vert )
\end{eqnarray*}%
where $\Vert \dot{q}_{j,k}\Vert ^{2}=\langle \dot{q}_{j,k},\dot{q}%
_{j,k}\rangle $ and $\phi _{\alpha }$ is a function such that $\phi _{\alpha
}^{\prime }(r)=-r^{-\alpha }$. The factor $1/2$ in the potential $U$ appears
due to the double sum of the same term. The Newtonian potential corresponds
to $\phi _{2}(r)=1/r$ and the vortex filament potential corresponds to $\phi
_{1}(r)=-\ln (r)$. Newton's laws of motion are: 
\begin{equation*}
m_{j,k}\ddot{q}_{j,k}=\nabla _{q_{j,k}}U=-\sum_{\substack{ (j^{\prime
},k^{\prime })  \\ \left( j^{\prime },k^{\prime }\right) \neq \left(
j,k\right) }}m_{j,k}m_{j^{\prime },k^{\prime }}\frac{q_{j,k}-q_{j^{\prime
},k^{\prime }}}{\Vert q_{j,k}-q_{j^{\prime },k^{\prime }}\Vert ^{\alpha +1}}%
,\qquad k=1,\dots ,k_{j},\qquad j=1,\dots ,n. 
\end{equation*}%
Let $\mathcal{L}=K+U$ be the Lagrangian of the system. The \textbf{\itshape %
action functional} 
\begin{equation*}
\mathcal{A}(q)=\int_{0}^{T}\mathcal{L}(q(t),\dot{q}(t))dt 
\end{equation*}%
is taken over the Sobolev space $H^{1}([0,T],E^{N})$ of paths $%
q:[0,T]\rightarrow E^{N}$ such that $q$ and its first derivative $\dot{q}$
are square integrable in the sense of distributions.

\subsection{Jacobi-like coordinates}

Similarly to \cite{Braids} we make a change of coordinates so that the
action functional splits into two terms $\mathcal{A}_{0}+\mathcal{H}$. The
Euler-Lagrange equations of $\mathcal{A}_{0}$ give rise to a set of
uncoupled $k_{j}$-body problems and an $n$-body problem. This procedure is a
generalization of \cite{Braids} which allows us to produce \textbf{\itshape %
carousel solutions} of the $N$-body problem. For this purpose we define new
coordinates 
\begin{equation}
q_{j,k}=Q_{0,j}+Q_{j,k},\quad k=1,\dots ,k_{j},\quad j=1,\dots ,n~
\label{Q coordinates}
\end{equation}%
which are overdetermined in the coordinates $Q_{0,j}$ and $%
Q_{j,k}$. To address this matter
we choose components $Q_{j,k}$ satisfying the $n$ constraints 
\begin{equation}
\sum_{k=1}^{k_{j}}m_{j,k}Q_{j,k}=0,\quad j=1,\dots ,n.  \label{Constraint}
\end{equation}

Let $Q=\left( Q_{0},Q_{1},\dots ,Q_{n}\right), $ where $Q_{0}=(Q_{0,1},\dots
,Q_{0,n}) \in E^{n}$ represent the positions of the
centers of mass of the $n$ clusters, and each $Q_{j}=(Q_{j,1},\dots
,Q_{j,k_{j}}) \in E^{k_{j}}$ represents the positions of the $k_{j}$ bodies
in the $j$-cluster. Denoting by $M_{j}=\sum_{k=1}^{k_{j}}m_{j,k}$ the total
mass of the $j$-cluster, we use \eqref{Q coordinates} and \eqref{Constraint}
to write the center of mass of the whole system as 
\begin{equation}
\sum_{j=1}^{n}\sum_{k=1}^{k_{j}}m_{j,k}q_{j,k}=\sum_{j=1}^{n}M_{j}Q_{0,j}%
\text{.}  \label{center of mass}
\end{equation}%
From now on, we impose the centers of mass of the $n$ clusters and the
center of mass of the $n$-body problem to be zero by considering the
coordinates such that 
\begin{equation*}
Q=\left( Q_{0},Q_{1},\dots ,Q_{n}\right) \in \E:=E_{0}\times E_{1}\times
\dots \times E_{n}, 
\end{equation*}%
where the subspaces $E_{j}$ are given by 
\begin{equation}  \label{EJ E0}
E_{0}:=\left\{ Q_{0}\in E^{n}:\sum_{j=1}^{n}M_{j}Q_{0,j}=0\right\} ,\quad
E_{j}:=\left\{ Q_{j}\in E^{k_{j}}:\sum_{k=1}^{k_{j}}m_{j,k}Q_{j,k}=0\right\},
\end{equation}%
for $j=1,\dots ,n~$. Since there is only one body in a $j$-cluster for $%
j=n_0+1,\dots ,n$, the constraint \eqref{Constraint} with $k_{j}=1$ implies
that $Q_{j}\in E_{j}=\{0\}$. Hence the position of a single body is
determined by $q_{j,1}=Q_{0,j}$ with $M_{j}=m_{j,1}$.

\begin{figure}[ht!]
\centering
\begin{minipage}{.85\textwidth}
\centering
\includegraphics[scale=0.65]{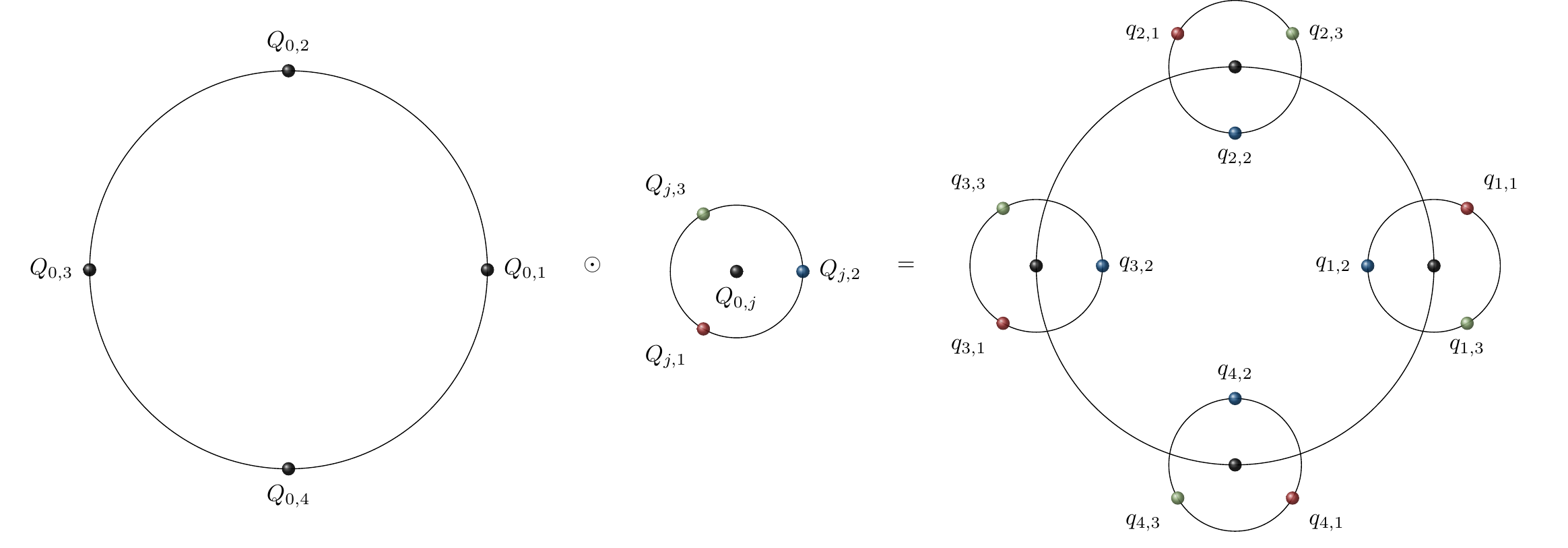}
\caption{The representation of the Jacobi-like coordinates with $q_{j,k}=Q_{0,j}+Q_{j,k}$ in the case $j=1,2,3,4$ and $k_j=1,2,3$ for each $j$. The symbol $\odot$ denotes the cabling operation of \cite{Mo}.}
\end{minipage}
\end{figure}

The Jacobi-like coordinates allow us to write the action functional as $%
\mathcal{A}(Q)=\mathcal{A}_{0}(Q)+\mathcal{H}(Q)$.\emph{\ }It is important
to remark that we consider this functional extended to $H^{1}([0,T],E^{N+n})$%
, and that the solutions to the $N$-body problem are the solutions of the
action $\mathcal{A}(Q)$\ with the holonomic constraint that $Q\in \E$\emph{. 
}The first term $\mathcal{A}_{0}$ is called the \defn{unperturbed functional}%
. The remaining part $\mathcal{H}$ is a \defn{coupling term} invariant by
linear isometries which becomes small when the norms $\Vert Q_{j,k}\Vert $
simultaneously become small for every $k,j$.

\begin{proposition}
\label{Prop: cabling coord} In Jacobi-like coordinates, the action
functional becomes%
\begin{equation*}
\mathcal{A}(Q)=\mathcal{A}_{0}(Q)+\mathcal{H}(Q)=\int_{0}^{T}\sum_{j=0}^{n}%
\mathcal{L}_{j}(Q_{j}(t),\dot{Q}_{j}(t))dt+\int_{0}^{T}h(Q(t))dt, 
\end{equation*}%
with 
\begin{equation}
\mathcal{L}_{0}(Q_{0},\dot{Q}_{0})=K_{0}(\dot{Q}_{0})+U_{0}(Q_{0})=\frac{1}{2%
}\sum_{j=1}^{n}M_{j}\Vert \dot{Q}_{0,j}\Vert ^{2}+\sum_{\substack{ %
j,j^{\prime }=1  \\ j<j^{\prime }}}^{n}M_{j}M_{j^{\prime }}\phi _{\alpha
}\left( \Vert Q_{0,j}-Q_{0,j^{\prime }}\Vert \right) ,
\label{Lagrangian L0(Q)}
\end{equation}%
\begin{equation}
\mathcal{L}_{j}(Q_{j},\dot{Q}_{j})=K_{j}(\dot{Q}_{j})+U_{j}(Q_{j})=\frac{1}{2%
}\sum_{k=1}^{k_{j}}m_{j,k}\Vert \dot{Q}_{j,k}\Vert ^{2}+\sum_{\substack{ %
k,k^{\prime }=1  \\ k<k^{\prime }}}^{k_{j}}m_{j,k}m_{j,k^{\prime }}\phi
_{\alpha }\left( \Vert Q_{j,k}-Q_{j,k^{\prime }}\Vert \right) ,
\label{Lagrangian Lj(Q)}
\end{equation}%
for $j=1,\dots ,n$ and%
\begin{equation}
h(Q)=\sum_{\substack{ j,j^{\prime }=1  \\ j<j^{\prime }}}^{n}\sum_{\substack{
k\in K_{j}  \\ k^{\prime }\in K_{j^{\prime }}}}m_{j,k}m_{j^{\prime
},k^{\prime }}\left( \phi _{\alpha }\left( \Vert \left(
Q_{0,j}-Q_{0,j^{\prime }}\right) +\left( Q_{j,k}-Q_{j^{\prime },k^{\prime
}}\right) \Vert \right) -\phi _{\alpha }\left( \Vert Q_{0,j}-Q_{0,j^{\prime
}}\Vert \right) \right) ,  \label{h(Q)}
\end{equation}%
where $K_{j}=\left\{ 1,\dots ,k_{j}\right\} $ is the set of indices for the $%
j$-cluster.
\end{proposition}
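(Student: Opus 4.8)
The plan is to substitute the defining relation $q_{j,k}=Q_{0,j}+Q_{j,k}$ directly into $\mathcal{L}=K+U$ and to collect terms, handling the kinetic and potential energies separately. First I would treat the kinetic energy. Differentiating the change of coordinates gives $\dot{q}_{j,k}=\dot{Q}_{0,j}+\dot{Q}_{j,k}$, hence
\begin{equation*}
\Vert\dot{q}_{j,k}\Vert^{2}=\Vert\dot{Q}_{0,j}\Vert^{2}+2\langle\dot{Q}_{0,j},\dot{Q}_{j,k}\rangle+\Vert\dot{Q}_{j,k}\Vert^{2}.
\end{equation*}
Summing against $\tfrac12 m_{j,k}$ produces three groups of terms. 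The first collapses to $\tfrac12\sum_{j}M_{j}\Vert\dot{Q}_{0,j}\Vert^{2}=K_{0}(\dot{Q}_{0})$ after factoring $\sum_{k}m_{j,k}=M_{j}$. The crucial observation is that the cross term vanishes: differentiating the constraint \eqref{Constraint} yields $\sum_{k=1}^{k_{j}}m_{j,k}\dot{Q}_{j,k}=0$, so for each fixed $j$ one has $\sum_{k}m_{j,k}\langle\dot{Q}_{0,j},\dot{Q}_{j,k}\rangle=\langle\dot{Q}_{0,j},\sum_{k}m_{j,k}\dot{Q}_{j,k}\rangle=0$. The remaining term is exactly $\sum_{j}K_{j}(\dot{Q}_{j})$, giving $K=K_{0}+\sum_{j=1}^{n}K_{j}$.

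Next I would treat the potential. I split the unordered double sum in $U$ according to whether the two indices belong to the same cluster ($j=j^{\prime}$) or to different clusters ($j\neq j^{\prime}$). For $j=j^{\prime}$ the centers of mass cancel, $q_{j,k}-q_{j,k^{\prime}}=Q_{j,k}-Q_{j,k^{\prime}}$, and summing over $k<k^{\prime}$ recovers $\sum_{j=1}^{n}U_{j}(Q_{j})$ by symmetry of $\phi_{\alpha}$. For $j\neq j^{\prime}$ one has $q_{j,k}-q_{j^{\prime},k^{\prime}}=(Q_{0,j}-Q_{0,j^{\prime}})+(Q_{j,k}-Q_{j^{\prime},k^{\prime}})$, so the inter-cluster contribution equals $\sum_{j<j^{\prime}}\sum_{k\in K_{j},\,k^{\prime}\in K_{j^{\prime}}}m_{j,k}m_{j^{\prime},k^{\prime}}\phi_{\alpha}(\Vert(Q_{0,j}-Q_{0,j^{\prime}})+(Q_{j,k}-Q_{j^{\prime},k^{\prime}})\Vert)$.

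The decisive step is to add and subtract $\phi_{\alpha}(\Vert Q_{0,j}-Q_{0,j^{\prime}}\Vert)$ inside each inter-cluster summand. The subtracted piece, summed against $m_{j,k}m_{j^{\prime},k^{\prime}}$, factors as $\sum_{k\in K_{j}}m_{j,k}\sum_{k^{\prime}\in K_{j^{\prime}}}m_{j^{\prime},k^{\prime}}=M_{j}M_{j^{\prime}}$, which reconstitutes precisely $U_{0}(Q_{0})=\sum_{j<j^{\prime}}M_{j}M_{j^{\prime}}\phi_{\alpha}(\Vert Q_{0,j}-Q_{0,j^{\prime}}\Vert)$; the added piece is by definition the coupling term $h(Q)$ of \eqref{h(Q)}. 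Hence $U=\sum_{j=1}^{n}U_{j}+U_{0}+h$. Combining the kinetic and potential identities gives $\mathcal{L}=\mathcal{L}_{0}+\sum_{j=1}^{n}\mathcal{L}_{j}+h$, and integrating over $[0,T]$ yields the claimed splitting $\mathcal{A}=\mathcal{A}_{0}+\mathcal{H}$.

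I expect no serious obstacle here, as the computation is elementary. The only two points requiring care are the vanishing of the kinetic cross term, which is precisely where the constraint \eqref{Constraint} enters and justifies the decoupling of the $Q_{0}$ and $Q_{j}$ motions, and the bookkeeping of the mass factorization $\sum_{k,k^{\prime}}m_{j,k}m_{j^{\prime},k^{\prime}}=M_{j}M_{j^{\prime}}$, which is what makes the $n$-body Lagrangian $\mathcal{L}_{0}$ in the effective masses $M_{j}$ emerge cleanly from the inter-cluster sum while leaving $h$ as a genuine remainder that vanishes when all $Q_{j,k}\to 0$.
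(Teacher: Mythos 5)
Your proof is correct and follows essentially the same route as the paper's: expanding the kinetic energy and using the differentiated constraint \eqref{Constraint} to kill the cross terms, then splitting the potential into intra- and inter-cluster sums and using the mass factorization $\sum_{k\in K_{j}}m_{j,k}\sum_{k^{\prime}\in K_{j^{\prime}}}m_{j^{\prime},k^{\prime}}=M_{j}M_{j^{\prime}}$ to reconstitute $U_{0}$ and isolate $h$. Your add-and-subtract phrasing is just the reverse bookkeeping of the paper's step of replacing $M_{j}$ by $\sum_{k\in K_{j}}m_{j,k}$ in the definition of $h$; the arguments are identical in substance.
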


\begin{proof}
It suffices to show that $\mathcal{L}(Q,\dot{Q})=\sum_{j=0}^{n}\mathcal{L}%
_{j}(Q_{j},\dot{Q}_{j})+h(Q)$. We first compute the kinetic energy $K$ in
these new coordinates. We get 
\begin{equation*}
K=\frac{1}{2}\sum_{j=1}^{n}\sum_{k=1}^{k_{j}}m_{j,k}\Vert \dot{q}_{j,k}\Vert
^{2}=\frac{1}{2}\sum_{j=1}^{n}\sum_{k=1}^{k_{j}}m_{j,k}\left( \Vert \dot{Q}%
_{0,j}\Vert ^{2}+\Vert \dot{Q}_{j,k}\Vert ^{2}+2\langle \dot{Q}_{0,j},\dot{Q}%
_{j,k}\rangle \right) . 
\end{equation*}%
Using the constraint \eqref{Constraint}, it follows that $%
\sum_{k=1}^{k_{j}}m_{j,k}\langle \dot{Q}_{0,j},\dot{Q}_{j,k}\rangle =0$.
Since the total mass of the $j$-cluster is $M_{j}$, we get%
\begin{equation*}
K=\frac{1}{2}\sum_{j=1}^{n}M_{j}\Vert \dot{Q}_{0,j}\Vert ^{2}+\frac{1}{2}%
\sum_{j=1}^{n}\sum_{k=1}^{k_{j}}m_{j,k}\Vert \dot{Q}_{j,k}\Vert ^{2}. 
\end{equation*}

The potential in the new coordinates becomes 
\begin{align*}
U& =\frac{1}{2}\sum_{j=1}^{n}\sum_{\substack{ k,k^{\prime }\in K_{j}  \\ %
k\neq k^{\prime }}}m_{j,k}m_{j,k^{\prime }}\phi _{\alpha }\left( \Vert
Q_{j,k}-Q_{j,k^{\prime }}\Vert \right) \\
& +\frac{1}{2}\sum_{\substack{ j,j^{\prime }=1  \\ j\neq j^{\prime }}}%
^{n}\sum _{\substack{ k\in K_{j}  \\ k^{\prime }\in K_{j^{\prime }}}}%
m_{j,k}m_{j^{\prime },k^{\prime }}\phi _{\alpha }\left( \Vert \left(
Q_{0,j}-Q_{0,j^{\prime }}\right) +\left( Q_{j,k}-Q_{j^{\prime },k^{\prime
}}\right) \Vert \right) .
\end{align*}%
The Lagrangian is of the form $\mathcal{L}(Q,\dot{Q})=\sum_{j=0}^{n}\mathcal{%
L}_{j}(Q_{j},\dot{Q}_{j})+h(Q)$, where%
\begin{eqnarray*}
h(Q) &=&\frac{1}{2}\sum_{\substack{ j,j^{\prime }=1  \\ j\neq j^{\prime }}}%
^{n}\sum_{\substack{ k\in K_{j}  \\ k^{\prime }\in K_{j^{\prime }}}}%
m_{j,k}m_{j^{\prime },k^{\prime }}\phi _{\alpha }\left( \Vert \left(
Q_{0,j}-Q_{0,j^{\prime }}\right) +\left( Q_{j,k}-Q_{j^{\prime },k^{\prime
}}\right) \Vert \right) \\
&&-\frac{1}{2}\sum_{\substack{ j,j^{\prime }=1  \\ j\neq j^{\prime }}}%
^{n}M_{j}M_{j^{\prime }}\phi _{\alpha }\left( \Vert Q_{0,j}-Q_{0,j^{\prime
}}\Vert \right) .
\end{eqnarray*}%
Replacing $M_{j}$ by $\sum_{k\in K_{j}}m_{j,k}$, we obtain \eqref{h(Q)}.
\end{proof}

\subsection{Rotating coordinates}
We
define rotating coordinates by 
\begin{align*}
Q_{j}(t)& =\exp (t\omega _{j}\mathcal{J}_{k_{j}})v_{j}(t),\quad j=1,\dots ,n
\\
Q_{0}(t)& =\exp (t\mathcal{J}_{n})v_{0}(t),
\end{align*}%
where $\omega _{j}$ is  a frequency of rotation for each cluster of bodies. The endomorphism $J$ denotes the standard complex structure on $E$ and $%
\mathcal{J}_{n}$ denotes the endomorphism $J\oplus \dots \oplus J$ of $E^{n}$%
. The components of $v_{0}=(v_{0,1},\dots ,v_{0,n})$ correspond to the $n$
positions of the centers of mass of the clusters in a rotating frame of
frequency one. The components of $v_{j}=\left( v_{j,1},\dots
,v_{j,k_{j}}\right) $ correspond to the positions of the $k_{j} $ bodies in
the $j$-cluster in a rotating frame of frequency $\omega _{j} $. By %
\eqref{Constraint} and \eqref{center of
mass} they satisfy the constraints 
\begin{equation}
\sum_{j=1}^{n}M_{j}v_{0,j}=0\quad \mbox{and}\quad%
\sum_{k=1}^{k_{j}}m_{j,k}v_{j,k}=0\text{,}  \label{constraint v}
\end{equation}%
which corresponds to $v=(v_{0},v_{1},\dots ,v_{n}) \in \E$. 

The
unperturbed functional 
\begin{equation*}
\mathcal{A}_{0}(Q)=\int_{0}^{T}\sum_{j=0}^{n}\mathcal{L}_{j}(Q_{j}(t),\dot{Q}%
_{j}(t))dt, 
\end{equation*}
is invariant under the transformations that rotate simultaneously each of
the coordinates $Q_{j,k}$. The term $\mathcal{A}_{0}(v)$ is obtained by
replacing the coordinates $Q_{j,k}$ in \eqref{Lagrangian L0(Q)} and %
\eqref{Lagrangian Lj(Q)} by the coordinates $v_{j,k}$. If we denote by $I $
the identity matrix on $E$, we define the endomorphisms $\mathcal{M}_{j}\in %
\mbox{End}(E^{k_{j}})$ and $\mathcal{M}_{0}\in \mbox{End}(E^{n})$ by 
\begin{equation*}
\mathcal{M}_{j}=m_{j,1}I\oplus \dots \oplus m_{j,k_{j}}I\quad \mbox{and}%
\quad \mathcal{M}_{0}=M_{1}I\oplus \dots \oplus M_{n}I. 
\end{equation*}%
The variations for $\mathcal{A}_{0}$ with respect to $\delta v_j$ are 
\begin{align}
\frac{\delta \mathcal{A}_{0}}{\delta v_{j}}& =\mathcal{M}_{j}\left( \mathcal{%
I}_{k_{j}}\partial _{t}+\omega _{j}\mathcal{J}_{k_{j}}\right)
^{2}v_{j}+\nabla _{v_{j}}U_{j}(v),\quad j=1,\dots ,n,  \label{EL: v tilde}
\\
\frac{\delta \mathcal{A}_{0}}{\delta v_{0}}& =\mathcal{M}_{0}\left( \mathcal{%
I}_{n}\partial _{t}+\mathcal{J}_{n}\right) ^{2}v_{0}+\nabla
_{v_{0}}U_{0}(v).  \label{EL: v}
\end{align}%
The potentials 
\begin{equation*}
U_{0}(v)=\sum_{\substack{ j,j^{\prime }=1  \\ j<j^{\prime }}}%
^{n}M_{j}M_{j^{\prime }}\phi _{\alpha }\left( \Vert v_{0,j}-v_{0,j^{\prime
}}\Vert \right) \quad \mbox{and}\quad U_{j}(v)=\sum_{\substack{ k,k^{\prime
}=1  \\ k<k^{\prime }}}^{k_{j}}m_{j,k}m_{j,k^{\prime }}\phi _{\alpha }\left(
\Vert v_{j,k}-v_{j,k^{\prime }}\Vert \right) . 
\end{equation*}
where obtained in Proposition \ref{Prop: cabling
coord}. The variations \eqref{EL:
v tilde} correspond to $k_{j}$-body problems in rotating frame,  and \eqref{EL:
v} corresponds to an $n$-body problem in rotating frame.

\begin{definition}
The \textbf{\itshape amended potential} $V_{j}:E^{k_j}\rightarrow \mathbb{R}$
for the $k_{j}$-body problem is defined by 
\begin{equation}
V_{j}(v_{j})=\frac{1}{2}\left\Vert \mathcal{M}_{j}^{1/2}v_{j}\right\Vert
^{2}+U_{j}(v_{j}).  \label{defn: amended potential}
\end{equation}%
A configuration $a_{j}\in E_{j}$ is a \textbf{\itshape central configurations%
} of the $k_{j}$-body problem (with frequency one and zero center of mass)
if it is is a critical point of $V_{j}$.
\end{definition}

A central configuration $a_{j}\in E_{j}$ of the $k_{j}$-body problem satisfies the equation $\mathcal{M}%
_{j}a_{j}=\nabla _{v_{j}}U_{j}(a_{j})$. Similarly, a central configuration $%
a_{0}\in E_{0}$ of the $%
n $-body problem satisfies $\mathcal{M}_{0}a_{0}=\nabla _{v_{0}}U_{0}(a_{0})$.
These equations are equivalent to \eqref{ccj} and \eqref{cc0}.

\subsection{Time and space scaling}

Given a central configuration $a_j=(a_{j,1}, \dots, a_{j,k_j})$ of the $k_j$%
-body problem, the scaling $v_{j}=r_{j}a_{j}$ is a central configuration if the frequency $\omega _{j}$ satisfies $%
\omega _{j}^{2}=r_{j}^{-(\alpha +1)}$. Indeed 
\begin{equation*}
\mathcal{M}_{j}\left( \mathcal{I}_{k_{j}}\partial _{t}+\omega _{j}\mathcal{J}%
_{k_{j}}\right) ^{2}v_{j}=-\omega _{j}^{2}r_{j}\mathcal{M}%
_{j}a_{j}=-\omega_j^2r_{j}^{\alpha +1}\nabla _{v_{j}}U_{j}(v_{j})\text{.} 
\end{equation*}%
For $\nu\in \mathbb{R}$, we define new coordinates $u_0=(u_{0,1}, \dots,
u_{0,n})$ and $u_j=(u_{j,1}, \dots, u_{j,k_j})$ by setting 
\begin{eqnarray*}
v_{j}(t) &=&r_{j}u_{j}(\nu t),\quad j=1,\dots,n \\
v_{0}(t) &=&u_{0}(\nu t).
\end{eqnarray*}

We shall rewrite the action functional $\mathcal{A}=\mathcal{A}_0+\mathcal{H}
$ with respect to the $u$-coordinates and find some restrictions on the set
of frequencies $\nu$ and $\omega_j$ so that the functional $\mathcal{A}$ is $%
2\pi$-periodic with respect to the new time-parameter $s=\nu t$. We first
start looking at the unperturbed functional $\mathcal{A}_0$. The $u$-coordinates are related to the $q$-coordinates as follows%
\begin{equation}
q_{j,k}(t)=\exp (tJ)u_{0,j}(\nu t)+r_{j}\exp (\omega _{j}tJ)u_{j,k}(\nu t),
\label{solutions}
\end{equation}%
for $k=1,\dots ,k_{j}$ and $j=1,\dots ,n$, with the constraint $u\in \E$.

\begin{proposition}
The unperturbed functional $\mathcal{A}_{0}(u)$ is given by 
\begin{equation}
\mathcal{A}_{0}(u)=\int_{0}^{\nu T}\left( \mathcal{L}_{0}(u_{0},\dot{u}%
_{0})+\sum_{j=1}^{n}r_{j}^{1-\alpha }\mathcal{L}_{j}(u_{j},\dot{u}%
_{j})\right) ds
\end{equation}%
where%
\begin{align}
& \mathcal{L}_{j}(u_{j},\dot{u}_{j})=\frac{1}{2}\left\Vert \mathcal{M}%
_{j}^{1/2}\left( \frac{\nu }{\omega _{j}}\partial _{s}+\mathcal{J}%
_{k_{j}}\right) u_{j}(s)\right\Vert ^{2}+U_{j}(u_{j})  \label{Lj, L0} \\
& \mathcal{L}_{0}(u_{0},\dot{u}_{0})=\frac{1}{2}\Vert \mathcal{M}%
_{0}^{1/2}\left( \nu \partial _{s}+\mathcal{J}_{n}\right) u_{0}(s)\Vert
^{2}+U_{0}(u_{0}).  \notag
\end{align}
\end{proposition}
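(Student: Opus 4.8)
The plan is to push the two preceding changes of variables through $\mathcal{A}_{0}$ and then carry out the time–space scaling, keeping careful track of the scalar prefactors that each step produces. First I would record $\mathcal{A}_{0}$ in the rotating coordinates $v$. Since $\exp(t\omega _{j}\mathcal{J}_{k_{j}})$ is orthogonal and commutes with $\mathcal{M}_{j}$ (both are block diagonal, the former acting as a rotation on each $E$-block and the latter as a scalar), differentiating $Q_{j}=\exp(t\omega _{j}\mathcal{J}_{k_{j}})v_{j}$ gives $\dot{Q}_{j}=\exp(t\omega _{j}\mathcal{J}_{k_{j}})(\mathcal{I}_{k_{j}}\partial _{t}+\omega _{j}\mathcal{J}_{k_{j}})v_{j}$, and invariance of the weighted quadratic form under the rotation yields $K_{j}=\tfrac{1}{2}\Vert \mathcal{M}_{j}^{1/2}(\mathcal{I}_{k_{j}}\partial _{t}+\omega _{j}\mathcal{J}_{k_{j}})v_{j}\Vert ^{2}$, and analogously for $Q_{0}$ with frequency one. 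Because the potentials $U_{j},U_{0}$ depend only on the mutual distances $\Vert Q_{j,k}-Q_{j,k^{\prime }}\Vert $ and rotations are isometries, they are unchanged: $U_{j}(Q_{j})=U_{j}(v_{j})$ and $U_{0}(Q_{0})=U_{0}(v_{0})$. This reproduces the rotating-frame Lagrangian whose variations are \eqref{EL: v tilde}--\eqref{EL: v}.

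Next I would substitute the scaling $v_{j}(t)=r_{j}u_{j}(\nu t)$ and $v_{0}(t)=u_{0}(\nu t)$ and change the integration variable to $s=\nu t$, so that $dt=\nu ^{-1}ds$ and $[0,T]$ becomes $[0,\nu T]$. For the kinetic terms the chain rule gives $(\mathcal{I}_{n}\partial _{t}+\mathcal{J}_{n})v_{0}=(\nu \partial _{s}+\mathcal{J}_{n})u_{0}$ and $(\mathcal{I}_{k_{j}}\partial _{t}+\omega _{j}\mathcal{J}_{k_{j}})v_{j}=r_{j}\omega _{j}(\tfrac{\nu }{\omega _{j}}\partial _{s}+\mathcal{J}_{k_{j}})u_{j}$; taking norms, the $j$-th kinetic term acquires the prefactor $r_{j}^{2}\omega _{j}^{2}$, which by the relation $\omega _{j}^{2}=r_{j}^{-(\alpha +1)}$ established in the time–space scaling section equals $r_{j}^{1-\alpha }$. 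For the potentials I would use the homogeneity of $\phi _{\alpha }$: from $\phi _{\alpha }^{\prime }(r)=-r^{-\alpha }$ one gets $\phi _{\alpha }(\lambda r)=\lambda ^{1-\alpha }\phi _{\alpha }(r)$ for $\alpha \neq 1$, whence $U_{j}(v_{j})=U_{j}(r_{j}u_{j})=r_{j}^{1-\alpha }U_{j}(u_{j})$, while $U_{0}(v_{0})=U_{0}(u_{0})$ since $v_{0}$ carries no amplitude. Combining kinetic and potential pieces assembles $r_{j}^{1-\alpha }\mathcal{L}_{j}(u_{j},\dot{u}_{j})$ and $\mathcal{L}_{0}(u_{0},\dot{u}_{0})$ in the form \eqref{Lj, L0}.

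Two bookkeeping points require care, and they are the only genuine obstacles; everything else (the commutation of $\mathcal{M}_{j}$ with the rotations and the invariance of the kinetic form) is routine. The change of variables produces an overall constant factor $\nu ^{-1}$ from $dt=\nu ^{-1}ds$; as it is independent of $u$, it merely rescales the action and leaves the Euler–Lagrange equations and critical points untouched, so it may be absorbed to obtain the stated normalization $\int_{0}^{\nu T}(\cdots)\,ds$. The second point is the logarithmic case $\alpha =1$, where $\phi _{1}(r)=-\ln r$ is not homogeneous but satisfies $\phi _{1}(\lambda r)=\phi _{1}(r)-\ln \lambda $; here $r_{j}^{1-\alpha }=1$ and $U_{j}(v_{j})=U_{j}(u_{j})-\ln r_{j}\sum _{k<k^{\prime }}m_{j,k}m_{j,k^{\prime }}$ differs from $U_{j}(u_{j})$ only by an additive constant, which again contributes merely a constant to the action and hence does not affect the variational problem. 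Thus \eqref{Lj, L0} holds in the sense relevant for locating critical points, which is all that is needed downstream.
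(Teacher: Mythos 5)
Your proposal is correct and follows essentially the same route as the paper's proof: homogeneity of $\phi_{\alpha}$ for the potentials, the chain rule plus the relation $\omega_{j}^{2}=r_{j}^{-(\alpha+1)}$ to extract the prefactor $r_{j}^{2}\omega_{j}^{2}=r_{j}^{1-\alpha}$ from the kinetic term, the separate logarithmic case $\alpha=1$ where only an additive constant appears, and absorbing the overall factor from $dt=\nu^{-1}ds$ (the paper's ``rescaling $\mathcal{A}$ and adding a constant''). If anything, you are slightly more explicit than the paper, spelling out the rotating-frame step and the exact constant $-\ln (r_{j})\sum_{k<k'}m_{j,k}m_{j,k'}$ where the paper simply writes $U_{j}(v_{j})=U_{j}(u_{j})+\ln (r_{j})$.
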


\begin{proof}
When $\alpha >1$ the potential $\phi _{\alpha }$ is homogeneous of degree $%
1-\alpha $. Then $U_{j}(v_{j})=r_{j}^{1-\alpha }U_{j}(u_{j})$ and the
kinetic energy is%
\begin{eqnarray*}
\left\Vert \mathcal{M}_{j}^{1/2}\left( \partial _{t}+\omega _{j}\mathcal{J}%
_{k_{j}}\right) v_{j}(t)\right\Vert ^{2} &=&\left\Vert r_{j}\mathcal{M}%
_{j}^{1/2}\left( \nu \partial _{s}+\omega _{j}\mathcal{J}_{k_{j}}\right)
u_{j}(s)\right\Vert ^{2} \\
&=&r_{j}^{1-\alpha }\left\Vert \mathcal{M}_{j}^{1/2}\left( \frac{\nu }{%
\omega _{j}}\partial _{s}+\mathcal{J}_{k_{j}}\right) u_{j}(s)\right\Vert
^{2}.
\end{eqnarray*}%
For $j=0$ we have $U_{0}(v_{0})=U_{0}(u_{0})$ and the kinetic energy is 
\begin{equation*}
\left\Vert \mathcal{M}_{0}^{1/2}\left( \partial _{t}+\mathcal{J}_{n}\right)
v_{0}(t)\right\Vert ^{2}=\left\Vert \mathcal{M}_{0}^{1/2}\left( \nu \partial
_{s}+\mathcal{J}_{n}\right) u_{0}(s)\right\Vert ^{2}. 
\end{equation*}%
The case $\alpha =1$ is similar, but now for $j=1,\dots ,n$ we have $%
U_{j}(v_{j})=U_{j}(u_{j})+\ln (r_{j})$ and 
\begin{equation*}
\left\Vert \mathcal{M}_{j}^{1/2}\left( \partial _{t}+\omega _{j}\mathcal{J}%
_{k_{j}}\right) v_{j}(t)\right\Vert ^{2}=\left\Vert \mathcal{M}%
_{j}^{1/2}\left( \frac{\nu }{\omega _{j}}\partial _{s}+\mathcal{J}%
_{k_{j}}\right) u_{j}(s)\right\Vert ^{2}. 
\end{equation*}%
The result follows by rescaling $\mathcal{A}$ and adding a constant to it.
\end{proof}

\begin{proposition}
The nonlinear term $h(u(s))$ of $\mathcal{H}(u)=\int_{0}^{2\pi }h(u(s))ds$ is $2\pi $-periodic
with respect to the time variable $s$ if and only if 
\begin{equation}
\omega _{j}=1+p_{j}\nu ,\qquad p_{j}\in \mathbb{Z},  \label{om}
\end{equation}%
for each $j=1,\dots ,n$. In this case, 
\begin{eqnarray}
h(u(s)) &=&\sum_{j<j^{\prime }}\sum_{\substack{ k\in K_{j} \\ k^{\prime
}\in K_{j^{\prime }}}}m_{j,k}m_{j^{\prime },k^{\prime }}\phi _{\alpha
}\left( \Vert u_{0,j}(s)-u_{0,j^{\prime }}(s)+r_{j}\exp \left( p_{j}s%
\mathcal{J}\right) u_{j,k}(s)-r_{j^{\prime }}\exp \left( p_{j^{\prime }}s%
\mathcal{J}\right) u_{j^{\prime },k^{\prime }}(s)\Vert \right)   \notag
\label{h(u(s))} \\
&&-\sum_{j<j^{\prime }}\sum_{\substack{ k\in K_{j} \\ k^{\prime }\in
K_{j^{\prime }}}}m_{j,k}m_{j^{\prime },k^{\prime }}\phi _{\alpha }\left(
\Vert u_{0,j}(s)-u_{0,j^{\prime }}(s)\Vert \right) .
\end{eqnarray}
\end{proposition}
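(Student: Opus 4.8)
The plan is to push the two successive coordinate changes through the explicit expression \eqref{h(Q)} for $h(Q)$ and then isolate the source of the $s$-dependence. Composing the rotating coordinates $Q_{0,j}(t)=\exp(tJ)v_{0,j}(t)$ and $Q_{j,k}(t)=\exp(t\omega_{j}J)v_{j,k}(t)$ with the scalings $v_{0,j}(t)=u_{0,j}(\nu t)$, $v_{j,k}(t)=r_{j}u_{j,k}(\nu t)$ gives, componentwise,
\begin{equation*}
Q_{0,j}(t)=\exp(tJ)u_{0,j}(\nu t),\qquad Q_{j,k}(t)=r_{j}\exp(t\omega_{j}J)u_{j,k}(\nu t),
\end{equation*}
which is exactly \eqref{solutions}. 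I would substitute these into the two groups of terms appearing in \eqref{h(Q)}.

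The decisive structural fact is that $\exp(tJ)$ is a linear isometry of $E$ and $\phi_{\alpha}$ depends on its vector argument only through the Euclidean norm. For the subtracted terms this yields at once $\|Q_{0,j}-Q_{0,j'}\|=\|u_{0,j}(\nu t)-u_{0,j'}(\nu t)\|$, so the ambient rotation drops out. For the coupled terms I would factor the common $\exp(tJ)$ out of the bracket $(Q_{0,j}-Q_{0,j'})+(Q_{j,k}-Q_{j',k'})$; writing $\exp(t\omega_{j}J)=\exp(tJ)\exp(t(\omega_{j}-1)J)$, the center-of-mass differences again reduce to $u_{0,j}-u_{0,j'}$ while each cluster term keeps the relative rotation $\exp(t(\omega_{j}-1)J)$. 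Passing to the new time $s=\nu t$ turns this factor into $\exp\!\big(\tfrac{\omega_{j}-1}{\nu}\,sJ\big)$ and renders every $u$-path a function of $s$ alone.

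At this stage $h(u(s))$ is a finite sum of products of the $2\pi$-periodic paths $u_{0,j},u_{j,k}$ with the rotations $\exp\!\big(\tfrac{\omega_{j}-1}{\nu}sJ\big)$, so the only possible obstruction to $2\pi$-periodicity is these rotation factors. Since $\theta\mapsto\exp(\theta J)$ has minimal period $2\pi$, the factor $\exp\!\big(\tfrac{\omega_{j}-1}{\nu}sJ\big)$ is $2\pi$-periodic in $s$ exactly when $(\omega_{j}-1)/\nu\in\mathbb{Z}$, i.e. $\omega_{j}=1+p_{j}\nu$. Substituting this makes the exponent equal to $p_{j}s$ and reproduces precisely the formula \eqref{h(u(s))}; the sufficiency direction is then immediate.

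The step I expect to be most delicate is the necessity (``only if'') direction, where one must deduce $(\omega_{j}-1)/\nu\in\mathbb{Z}$ from the $2\pi$-periodicity of $h$. I would establish it by testing against constant admissible configurations: holding all $u_{0,l}\equiv a_{0,l}$ at distinct points and choosing a genuine cluster $j\le n_{0}$ with a nonzero internal coordinate, the only $s$-dependence of $h(u(s))$ enters through the rotated vector $\exp\!\big(\tfrac{\omega_{j}-1}{\nu}sJ\big)u_{j,k}$, and since $r_{j}>0$ this dependence is genuinely non-constant; periodicity then forces $\exp\!\big(2\pi\tfrac{\omega_{j}-1}{\nu}J\big)=I$, whence $(\omega_{j}-1)/\nu\in\mathbb{Z}$. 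The care here lies in isolating a single rotation frequency --- most cleanly by pairing the genuine cluster $j$ with a single-body cluster $j'>n_{0}$, for which $u_{j'}\equiv 0$, or, when all clusters are genuine, by a short Fourier argument reading off the carrier frequencies of $h(u(s))$.
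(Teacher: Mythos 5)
Your proposal is correct and follows essentially the same route as the paper: substitute the rotating and scaled coordinates into $h(Q)$, use that $\phi_\alpha$ depends only on the norm together with the isometry $\exp(tJ)$ to reduce the rotation factors to $\exp\bigl(\tfrac{\omega_j-1}{\nu}s\mathcal{J}\bigr)$, and conclude that $2\pi$-periodicity in $s$ holds precisely when $(\omega_j-1)/\nu\in\mathbb{Z}$. The only difference is that you spell out the necessity direction (testing with constant configurations, setting the other clusters' internal coordinates to zero), a step the paper simply asserts with ``it follows that''; your added care there is sound.
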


\begin{proof}
In \eqref{h(Q)} the terms $\phi _{\alpha }\left( \Vert \left(
Q_{0,j}(t)-Q_{0,j^{\prime }}(t)\right) +\left( Q_{j,k}(t)-Q_{j^{\prime
},k^{\prime }}(t)\right) \Vert \right) $ become 
\begin{equation*}
\phi _{\alpha }\left( \Vert u_{0,j}(s)-u_{0,j^{\prime }}(s)+r_{j}\exp \left( 
\frac{\omega _{j}-1}{\nu }s\mathcal{J}\right) u_{j,k}(s)-r_{j^{\prime }}\exp
\left( \frac{\omega _{j^{\prime }}-1}{\nu }s\mathcal{J}\right) u_{j^{\prime
},k^{\prime }}(s)\Vert \right), 
\end{equation*}%
and the terms $\phi _{\alpha }\left( \Vert Q_{0,j}(t)-Q_{0,j^{\prime
}}(t)\Vert \right) $ become $\phi _{\alpha }\left( \Vert
u_{0,j}(s)-u_{0,j^{\prime }}(s)\Vert \right) .$ It follows that the
integrand $h(u(s))$ is $2\pi $-periodic with respect to $s$ if and only
if, for all $j=1,\dots ,n$, the frequency $(\omega _{j}-1)/\nu $ is an
integer. We can thus fix $n$ integers $p_{j}$ such that $\omega
_{j}=1+p_{j}\nu $. Replacing $(\omega _{j}-1)/\nu $ by $p_{j}$ in the above
expression yields \eqref{h(u(s))}.
\end{proof}

Thus, we obtain the action functional
\begin{equation}
\mathcal{A}(u)=\mathcal{A}_{0}(u)+\mathcal{H}(u)=\int_{0}^{2\pi }\left( 
\mathcal{L}_{0}(u_{0},\dot{u}_{0})+\sum_{j=1}^{n}r_{j}^{1-\alpha }\mathcal{L}%
_{j}(u_{j},\dot{u}_{j})\right) ds+\int_{0}^{2\pi }h(u(s))ds ,
\label{Euler functional coord u}
\end{equation}%
with $h(u(s))$ as in \eqref{h(u(s))} and $\mathcal{L}_{0}(u_{0},\dot{u}%
_{0})$ and $\mathcal{L}_{j}(u_{j},\dot{u}_{j})$ as in \eqref{Lj, L0} . At
this point, the frequency $\nu \in \mathbb{R}$ is still a free parameter. In
analogy with \cite{Braids}, we choose the frequency $\nu $ as a function of $%
\varepsilon$ such that the relation $r_{j}=\varepsilon $ holds in the case
that $p_{j}=1$. Thus, we fix arbitrary integers $p_{1},\dots ,p_{n}\in 
\mathbb{Z}$ and impose the following conditions

\begin{itemize}
\item[\textbf{(A)}] $\omega _{j}=1+p_{j}\nu $ and $r_{j}=\left( 1+p_{j}\nu
\right) ^{-2/(\alpha +1)}$ for each $j=1,\dots ,n$.

\item[\textbf{(B)}] $\nu =\varepsilon ^{-\left( \alpha +1\right) /2}-1$ for
some $\varepsilon >0$.\label{nu1}
\end{itemize}

These conditions allow us to express the parameters $\omega _{j},r_{j},\nu $
as functions of $\varepsilon $ with
\begin{eqnarray}
r_{j} &=&\left( 1+p_{j}\nu \right) ^{-2/(\alpha +1)}=p_{j}^{-2/(\alpha
+1)}\varepsilon +\mathcal{O}\left( \varepsilon ^{\left( \alpha +3\right)
/2}\right) =p_{j}^{-2/(\alpha +1)}\varepsilon +\mathcal{O}\mathbb{(}%
\varepsilon ^{2}),  \label{expansion} \\
\omega _{j}/\nu  &=&p_{j}+1/\nu =p_{j}+\mathcal{O(}\varepsilon ^{\left(
\alpha +1\right) /2})=p_{j}+\mathcal{O}\mathbb{(}\varepsilon ).  \notag
\end{eqnarray}%
We look at solutions of the $N$-body problem as critical points of $\mathcal{A}(u)$ on some
collision-less open set $\Omega $ for small $\varepsilon $. 
\subsection{Gradient formulation and symmetries}

The space $H^{1}(S^{1},E^{n+N})$ is identified with its dual $%
H^{1}(S^{1},E^{n+N})^{\ast }$ by the Riesz representation Theorem. This
allows us to define the gradient operator by the relation $\nabla \mathcal{A}%
=(-\partial _{s}^{2}+1)^{-1}\delta \mathcal{A}$. We use the gradient
formulation $\nabla \mathcal{A}=\nabla \mathcal{A}_{0}+\nabla \mathcal{H}$,
where $\nabla \mathcal{H}=\mathcal{O}(\varepsilon )$ is a
compact operator and $\nabla \mathcal{A}_{0}$ is given by
\begin{align}
\nabla _{u_{j}}\mathcal{A}_{0}(u)& =(-\partial
_{s}^{2}+1)^{-1}r_{j}^{1-\alpha }\left( -\mathcal{M}_{j}\left( \frac{\nu }{%
\omega _{j}}\mathcal{I}_{k_{j}}\partial _{s}+\mathcal{J}_{k_{j}}\right)
^{2}u_{j}+\nabla _{u_{j}}U_{j}(u_{j})\right) ,  \label{grad uj} \\
\nabla _{u_{0}}\mathcal{A}_{0}(u)& =(-\partial _{s}^{2}+1)^{-1}\left( -%
\mathcal{M}_{0}\left( \nu \mathcal{I}_{n}\partial _{s}+\mathcal{J}%
_{n}\right) ^{2}u_{0}+\nabla _{u_{0}}U_{0}(u_{0})\right) .  \label{grad u0}
\end{align}%
We know that these equations admit the constant solution $%
u_{a}=(a_{0},a_{1},\dots ,a_{n})\in X$ where $a_{0}\in E_{0}$ is a central
configuration of the $n$-body problem, each $a_{j}\in E_{j}$ is a central
configuration of the $k_{j}$-body problem for $j=1,...,n_{0}$ and $a_{j}\in
E_{j}=\{0\}$ for $j=n_{0}+1,...,n$.

Due to the presence of symmetries, there is in fact a group orbit of
solutions generated by $u_{a}$. The functional $\mathcal{A}_{0}(u)$ is
invariant under the action of the torus $G=U(1)^{n+1}$  defined by 
\begin{equation*}
(g_{0},g_{1},\dots ,g_{n})\cdot (u_{0},u_{1}\dots
,u_{n})=(g_{0}u_{0},g_{1}u_{1},\dots ,g_{n}u_{n}).
\end{equation*}%
where $(g_{0},g_{1},\dots ,g_{n})\in U(1)^{n+1}$. The action of $g_{0}$
rotates the $n$-body problem consisting of the $n$ centers of mass of the
clusters about the origin. The action of each $g_{j}$ rotates the $k_{j}$%
-bodies in each cluster about their center of mass for $j=1,...,n_{0}$ and
acts trivially otherwise. The coupling term $\mathcal{H}$ breaks this
symmetry in the sense that the perturbed functional $\mathcal{A}$ is
invariant under the diagonal subgroup $H=\widetilde{U(1)}$. By $G$%
-equivariance of the equations \eqref{grad uj} and \eqref{grad u0}, the
group orbit $G(u_{a})$ is an orbit of solutions. 
\subsection{Euler-Lagrange equations with holonomic constraints}

Let $S^{1}=\mathbb{R}/2\pi \mathbb{Z}$ be the standard parametrization of
the circle and denote by 
\begin{equation*}
X=H^{1}(S^{1},\E)\subset H^{1}(S^{1},E^{n+N})
\end{equation*}%
the real Hilbert space of $2\pi $-periodic paths in $\E$. 
The solutions of the $N$-body problem are the critical points of the
augmented action $\mathcal{A}$ restricted to $X$. That is, the system of equations of the $N$-body problems is the gradient of $\mathcal{A}$ taking respect the subspace $X$ and is given by 
\begin{equation}
P_{X}\nabla \mathcal{A}(u)=0.  \label{proj}
\end{equation}%
Solving this system is equivalent to finding the critical points of $\mathcal{A}$
with the
holonomic constraints 
\begin{equation*}
g_{0}^{s}(u_{0})=\sum_{j=1}^{n}M_{j}u_{0,j}\cdot e_{s}=0,\qquad
g_{j}^{s}(u_{j})=\sum_{k=1}^{k_{j}}m_{j,k}u_{j,k}\cdot e_{s}=0,\qquad s=1,2.
\end{equation*}%
Thus, the explicit projection $P_{X}:H^{1}(S^{1},E^{n+N})\rightarrow X$ is given in
components $u_{j}$ by 
\begin{equation}\label{px}
P_{X}(u_{j})=u_{j}-\sum_{s=1}^{2}\frac{(u_{j},\nabla
_{u_{j}}g_{j}^{s}(u_{j}))}{\Vert \nabla _{u_{j}}g_{j}^{s}(u_{j})\Vert ^{2}}%
\nabla _{u_{j}}g_{j}^{s}(u_{j}),\qquad j=0,\dots ,n,
\end{equation}
and the explicit system of equations, equivalent to \eqref{proj}, is
\begin{equation*}
\nabla _{u_{j}}\mathcal{A}(u)=\sum_{s=1}^{2}\frac{(\nabla _{u_{j}}\mathcal{A}%
(u),\nabla _{u_{j}}g_{j}^{s}(u_{j}))}{\Vert \nabla
_{u_{j}}g_{j}^{s}(u_{j})\Vert ^{2}}\nabla _{u_{j}}g_{j}^{s}(u_{j}),\qquad
j=0,\dots ,n.
\end{equation*}%
with the left hand side given in \eqref{grad u0} and \eqref{grad uj}. 
We study these equations  in a collision-less tubular neighbourhood $\Omega \subset X$ of the orbit $%
G(u_{a})$.

\begin{remark}
One may consider also the augmented action with the
holonomic constraints 
\begin{equation*}
\mathcal{A}^{\ast }(u^{\ast })=\mathcal{A}_{0}(u)+\mathcal{H}%
(u)+\sum_{j=0}^{n}\sum_{s=1}^{2}\lambda _{j}^{s}\cdot g_{j}^{s}\text{,}
\end{equation*}%
where $u^{\ast }=(u,\lambda _{0},...,\lambda _{n})\in E^{N+n}\times \mathbb{R%
}^{2(n+1)}$. Solving the action for the augmented system $\nabla\mathcal{A}^{\ast}(u^{\ast })=0$ is equivalent to
solving $\nabla_{u_j}\mathcal{A}^{\ast}(u^{\ast })=P_{X}%
\nabla_{u_j}\mathcal{A}(u)=0$ with $u\in E_{red}^{n+N}$
(because $\nabla_{\lambda_j^s}\mathcal{A}^{\ast}(u^{\ast })=g_{j}^{s}(u)=0$). For each solution $u_{a}$ such that $\nabla_u\mathcal{A}_0(u_{a})=0$ there is a unique $u_{a}^{\ast }$ such
that $\nabla_{u^{\ast}}\mathcal{A}_0^{\ast}(u_{a}^{\ast
})=0$, and a similar procedure can be implemented for the orbit of $%
u_{a}^{\ast }$ with the Lagrange multipliers $\lambda_j $ given as variables. The
procedures are equivalent because $(u,\lambda )$ is in the kernel of $\nabla _{u^{\ast }}^{2}\mathcal{A}^{\ast }(u_{a}^{\ast })$ if and only $%
u$ is in the kernel of $P_{X}\nabla _{u}^{2}\mathcal{A}(u_{a})|_{X}$.
\end{remark}

\begin{remark}
\label{remark:ambiant} We may consider also coordinates for $\E$ to write directly the action in these coordinates. That is, we may fix coordinates $w_{j}=(w_{j,1},\dots
,w_{j,k_{j}-1})$ on $E_{j}$ such that $u_{j}\in E_{j}$ viewed as an element
of $E^{k_{j}}$ can be written of the form $u_{j}=\Lambda _{j}w_{j}$ for some 
$k_{j}\times (k_{j}-1)$ matrix $\Lambda _{j}$. Note that 
\begin{equation*}
\left\langle \nabla _{u_{j}}\mathcal{A}_{0}(u_{j}),\delta u_{j}\right\rangle
=\left\langle \Lambda _{j}^{T}\nabla _{u_{j}}\mathcal{A}_{0}(\Lambda
_{j}w_{j}),\delta w_{j}\right\rangle 
\end{equation*}
where the first inner product is taken on the ambient space $H^{1}([0,2\pi
],E^{k_{j}})$ and the second on the reduced space $H^{1}([0,2\pi ],E_{j})$. The reduced
Euler-Lagrange equations on $X$ are then 
\begin{equation}
\Lambda _{j}^{T}\nabla _{u_{j}}\mathcal{A}_{0}(\Lambda _{j}w_{j})=0,\qquad
j=0,\dots ,n-1.  \label{EL eqns reduced}
\end{equation}

This is the method adopted in our previous paper \cite{Braids} for $j=1$ and $k_{j}=2$. In that case, the Euler-Lagrange equations of the 2-body problem
in rotating frame are 
\begin{equation}
-\mathcal{M}_{1}\left( \frac{\nu }{\omega _{1}}\mathcal{I}_{k_{1}}\partial
_{s}+\mathcal{J}_{k_{1}}\right) ^{2}u_{1}+\nabla _{u_{1}}U_{1}(u_{1})=0
\label{2BP}
\end{equation}%
where $\mathcal{M}_{1}$ is the diagonal matrix whose entries are the masses
of the two bodies $m_{1,1}$ and $m_{1,2}$. The positions are denoted $%
u_{1}=(u_{1,1},u_{1,2})$. We parametrize the reduced space $E_{1}$ by the
relative position $w_{1}=u_{1,1}-u_{1,2}$, so that $u_{1}=\Lambda _{1}w_{1}$
with $\Lambda _{1}=%
\begin{bmatrix}
\lambda _{1,1}I & \lambda _{1,2}I%
\end{bmatrix}%
$, with 
\begin{equation*}
\lambda _{1,1}=\frac{m_{1,2}}{m_{1,1}+m_{1,2}},\qquad \lambda _{2,1}=-\frac{%
m_{1,1}}{m_{1,1}+m_{1,2}}.
\end{equation*}%
Conjugating equation \eqref{2BP} by $\Lambda _{1}^{T}$ on the left and $%
\Lambda _{1}$ on the right yields the Kepler problem in rotating frame
\begin{equation*}
-M_{0}\left( \frac{\nu }{\omega _{1}}\partial _{s}+J\right)
^{2}w_{1}-m_{1,1}m_{1,2}\frac{w_{1}}{\Vert w_{1}\Vert ^{\alpha +1}}=0,
\end{equation*}%
where $M_{0}=\frac{m_{1,1}m_{1,2}}{%
m_{1,1}+m_{1,2}}$ is the reduced mass.
\end{remark}

\section{Lyapunov-Schmidt reduction}

In this section we reduce the problem to finite dimension by writing the paths in Fourier series and applying a Lyapunov-Schmidt reduction. We have that
\begin{equation*}
X=\left\{ u\in L^{2}(S^{1},\E)\mid \sum_{\ell \in \mathbb{Z}}(\ell
^{2}+1)\Vert \hat{u}_{\ell }\Vert ^{2}<\infty \right\},
\end{equation*}%
where $(\hat{u}_{\ell })$ is the sequence of Fourier coefficients in $(\E)^{%
\mathbb{C}}=\E\oplus i\E$ satisfying $\hat{u}_{\ell }=\overline{\hat{u}}%
_{-\ell }$. That is, $u\in X$ has Fourier series $u=\sum_{\ell \in \mathbb{Z}%
}\hat{u}_{\ell }e_{\ell }$ where $e_{\ell }:S^{1}\rightarrow \mathbb{C}$ is
given by $e_{\ell }(s)=e^{i\ell s}$. We can then write $X=X_{0}\oplus W$ and
any element $u\in X$ decomposes uniquely as $u=\xi +\eta $ with 
\begin{equation*}
\xi =\hat{u}_{0},\qquad \eta =\sum_{\ell \neq 0}\hat{u}_{\ell }e_{\ell }.
\end{equation*}%

The system of equations $P_{X}\nabla \mathcal{A}(\xi +\eta )=0$ splits into 
\begin{align}
P_{X_{0}}\nabla \mathcal{A}(\xi +\eta )& =0\in X_{0}~,  \nonumber \\
P_{W}\nabla \mathcal{A}(\xi +\eta )& =0\in W,  \label{LS second eq}
\end{align}%
where $P_{X_{0}}:H^1(S^1, E^{n+N})\to X_0$ is the canonical projection from $P:X\to X_{0}$ given by $P u=\xi$, composed with $P_X$ in \eqref{px}.
The projection $P_{W}: H^1(S^1, E^{n+N})\to W$ is defined as the canonical projection from $(I-P):X\to W$ given by $(I-P) u=\eta$, composed with $P_X$.

The Lyapunov-Schmidt
reduction requires solving the equation $P_W\nabla \mathcal{A}%
(\xi +\eta )=0$. For this purpose, we define an operator $F_{\varepsilon
}:\Omega \subset X\rightarrow W$ by 
\begin{equation*}
F_{\varepsilon }(\xi ,\eta ):=\mathcal{D}_{\varepsilon }P_W\nabla \mathcal{A}(\xi + \eta) 
\end{equation*}%
where $\mathcal{D}_{\varepsilon }\in \mbox{End}(E^{n+N})$ is the block
diagonal matrix

\begin{equation}
\mathcal{D}_{\varepsilon }=\nu ^{-2}\mathcal{I}_{n}\oplus r_{1}^{\alpha -1}%
\mathcal{I}_{k_{1}}\oplus \dots \oplus r_{n}^{\alpha -1}\mathcal{I}_{k_{n}},
\end{equation}%
with $r_{j}^{\alpha -1}=\mathcal{O}(\varepsilon ^{\alpha -1})$ and $\nu
^{-2}=\mathcal{O}(\varepsilon ^{\alpha +1})$. Since $\mathcal{D}_{\varepsilon }$ is block diagonal, it commutes with $P_W$ and we get
$$F_{\varepsilon }(\xi ,\eta )=P_W \mathcal{D}_{\varepsilon}\nabla \mathcal{A}(\xi+\eta).$$

Solving \eqref{LS second
eq} is equivalent to solving $F_{\varepsilon }(\xi ,\eta )=0$ for $%
\varepsilon \neq 0$ because $\mathcal{D}_{\varepsilon }$ is an isomorphism.
The operator $F_{\varepsilon }(\xi ,\eta )$ is continuous at 
$\varepsilon =0$ because $\lim_{\varepsilon \rightarrow 0}\left( \nu /\omega
_{j}\right) ^{2}=\left( 1/p_{j}\right) ^{2}$. The limit
\begin{equation}\label{f0}
F_{0}(\xi ,\eta )=\lim_{\varepsilon \rightarrow 0} P_W\mathcal{D}_{\varepsilon }\nabla %
\mathcal{A}_0(\xi +\eta )
\end{equation}%
is well defined since $\mathcal{D}_{\varepsilon}\nabla\mathcal{H}=\mathcal{O}(\varepsilon)$. Furthermore, $F_{0}(gu_{a},0)=0$ for all $g\in G$ by
equivariance of the unperturbed gradient. Solving $F_{\varepsilon }(\xi
,\eta )=0$ requires the derivative $\partial _{\eta }F_{0}[(gu_{a},0)]$ to
be invertible on $W$. Although this is true
when $\alpha \neq 2$, the operator is not invertible on the whole space $W$
when $\alpha =2$. We shall then treat these cases separately.

\begin{remark}
Alternatively, we could have used the matrix 
\begin{equation*}
\mathcal{D}_{\varepsilon }^{\prime }=\varepsilon ^{\alpha +1}\mathcal{I}%
_{n}\oplus \varepsilon ^{\alpha -1}\mathcal{I}_{k_{1}}\oplus \dots \oplus
\varepsilon ^{\alpha -1}\mathcal{I}_{k_{n}} 
\end{equation*}
according to \cite{Braids}. Both regularizations allow us to perform the
same reduction, the only difference is that the scaling matrix $\mathcal{D}%
_{\varepsilon }$ depends on $p_{j}$'s.
\end{remark}

\subsection*{The case $\protect\alpha \neq 2$}

The linearization of \eqref{f0} at $u_{a}$ is given by

$$\partial _{\eta }F_{0}[(u_{a},0)]=\lim_{\varepsilon \rightarrow 0} P_W\mathcal{D}_{\varepsilon }\nabla_u^2 %
\left.\mathcal{A}_0[u_a]\right|_{W} .$$
The Hessian operator of $\mathcal{A}_0$ at the critical point is block diagonal of the form
\begin{equation*}
\nabla _{u}^{2}\mathcal{A}_{0}[u_{a}]=\nabla _{u_{0}}^{2}\mathcal{A}%
_{0}[a_{0}]\oplus \nabla _{u_{1}}^{2}\mathcal{A}_{0}[a_{1}]\oplus \dots
\oplus \nabla _{u_{n}}^{2}\mathcal{A}_{0}[a_{n}].
\end{equation*}%
The blocks are derived using \eqref{grad uj}, \eqref{grad u0} and given by
\begin{align*}
\nabla _{u_{j}}^{2}\mathcal{A}_{0}[a_{j}]& =(-\partial
_{s}^{2}+1)^{-1}r_{j}^{\alpha -1}\left( -\left( \nu /\omega _{j}\right) ^{2}%
\mathcal{M}_{j}\partial _{s}^{2}-2\left( \nu /\omega _{j}\right) \mathcal{M}%
_{j}\mathcal{J}_{k_{j}}\partial _{s}+\nabla _{u_{j}}^{2}V_{j}[a_{j}]\right) ,
\\
\nabla _{u_{0}}^{2}\mathcal{A}_{0}[a_{0}]& =(-\partial _{s}^{2}+1)^{-1}\nu
^{2}\left( -\mathcal{M}_{0}\partial _{s}^{2}-2\nu ^{-1}\mathcal{M}_{0}%
\mathcal{J}_{n}\partial _{s}+\nu ^{-2}\nabla _{u_{0}}^{2}V_{0}[a_{0}]\right)
,
\end{align*}%
where $V_{j}$ are the amended potentials in 
\eqref{defn: amended
potential}.

\begin{definition}
We define the \defn{regularized action} for the $j$-cluster by $%
A_{j}(u_{j})=\int_{0}^{2\pi }L_{j}(u_{j},\dot{u}_{j})ds$ where
\begin{equation}
L_{j}(u_{j},\dot{u}_{j})=\lim_{\varepsilon \rightarrow 0}\mathcal{L}%
_{j}(u_{j},\dot{u}_{j})=\frac{1}{2}\left\Vert \mathcal{M}_{j}^{1/2}\left( 
\frac{1}{p_{j}}\partial _{s}+\mathcal{J}_{k_{j}}\right) u_{j}(s)\right\Vert
^{2}+U_{j}(u_{j}).  \label{Lj}
\end{equation}
\end{definition}

Since $\lim_{\varepsilon \rightarrow 0}\left( \omega _{j}/\nu \right)
=1/p_{j}$, we obtain 
\begin{equation*}
\lim_{\varepsilon \rightarrow 0}r_{j}^{\alpha -1}\nabla _{u_{j}}\mathcal{A}%
_{0}(u_{j})=\nabla _{u_{j}}A_{j}(u_{j}),
\end{equation*}%
and 
\begin{equation*}
\partial _{\eta }F_{0}[(u_{a},0)]=P_{W}\left( \left.-(-\partial _{s}^{2}+1)^{-1}%
\mathcal{M}_{0}\partial _{s}^{2}\oplus \nabla _{u_{1}}^{2}A_{1}[a_{1}]\oplus
\dots \oplus \nabla _{u_{n}}^{2}A_{n}[a_{n}]\right|_{W}\right),
\end{equation*}%
where  
\begin{equation*}
\nabla _{u_{j}}^{2}A_{j}[a_{j}]=(-\partial _{s}^{2}+1)^{-1}\left(
-(1/p_{j})^{2}\mathcal{M}_{j}\partial _{s}^{2}-2\left( 1/p_{j}\right) 
\mathcal{M}_{j}\mathcal{J}_{k_{j}}\partial _{s}+\nabla
_{u_{j}}^{2}V_{j}[a_{j}]\right) .
\end{equation*}
Since $\eta =\sum_{\ell \neq 0}\hat{u}_{\ell }e_{\ell }\in W$, we can write 
\begin{equation}
\partial _{\eta }F_{0}[(u_{a},0)]\eta =\sum_{\ell \neq 0}\hat{T}_{\ell }\hat{u%
}_{\ell }e_{\ell },  \label{unreduced F}
\end{equation}%
where the matrix $\hat{T}_{\ell }\in \mbox{End}(\E)$ is block diagonal
of the form 
\begin{equation}
\hat{T}_{\ell }=\hat{T}_{\ell ,u_{0}}\oplus \hat{T}_{\ell ,u_{1}}\oplus
\dots \oplus \hat{T}_{\ell ,u_{n}}.  \label{G_0 blocks}
\end{equation}%
These blocks are given explicitly by%
\begin{align}
\hat{T}_{\ell ,u_{0}}& =\frac{\ell ^{2}}{\ell ^{2}+1}P_{E_0}\left.\mathcal{M}_{0}\right|_{E_0} \notag
\label{T block} \\
\hat{T}_{\ell ,u_{j}}& =\frac{1}{1+\ell ^{2}}P_{E_j}\left.\left( \left( \frac{\ell }{p_{j}%
}\right) ^{2}\mathcal{M}_{j}-2i\left( \frac{\ell }{p_{j}}\right) \mathcal{M}%
_{j}\mathcal{J}_{k_{j}}+\nabla _{u_{j}}^{2}V_{j}[a_{j}]\right)\right|_{E_j} 
\end{align}%
where $\hat{T}_{\ell ,u_{0}}\in \mbox{End}(E_0^{\mathbb{C}})$ and $\hat{T%
}_{\ell ,u_{j}}\in \mbox{End}(E_j^{\mathbb{C}})$.

\begin{definition}
\label{Def1} The central configuration $a_{j}\in E_{j}$ is 
\defn{ $2\pi
p_j$-nondegenerate} if the group orbit $U(1)(a_{j})$ is a nondegenerate
critical manifold of the functional $A_{j}(u_{j})$ defined by \eqref{Lj} in
the space $H^{1}(S^{1},E_{j})$.
\end{definition}

The orbit $U(1)(a_{j})$ is called a nondegenerate critical manifold of the
functional $A_{j}(u_{j})$ if the kernel of the Hessian at $a_j$ in $H^{1}(S^{1},E_{j})$ is $\mbox{span}(\mathcal{J}%
_{k_{j}}a_{j})$. Since $u_{j}\in H^{1}(S^{1},E_{j})$ is orthogonal to $%
\mbox{span}(\mathcal{J}_{k_{j}}a_{j})$ if and only if $u=\sum_{\ell \in 
\mathbb{Z}}\hat{u}_{\ell }e_{\ell }$ with $\hat{u}_{0}$ orthogonal to $%
\mbox{span}(\mathcal{J}_{k_{j}}a_{j})$ in $E_{j}$, this condition is
equivalent to the assumption that the blocks $\hat{T}_{\ell
,u_{j}}$ are invertible in $E_{j}^{\mathbb{C}}$ for $\ell \neq 0$
and $\hat{T}_{0,u_{j}}$ is invertible in the complement to 
$\mbox{span}(\mathcal{J}_{k_{j}}a_{j})$ in $E_{j}$.

\begin{lemma}
\label{lem: bounded inverse} Assume that $\alpha \neq 2$ and $a_{j}$ is $%
2\pi p_{j}$-nondegenerate for $j=1,\dots ,n_{0}$. Then the operator $%
\partial _{\eta }F_{0}[(gu_{a},0)]$ is invertible on $W$ for all $g\in G$,
i.e. there is a constant $c>0$ such that 
\begin{equation*}
\Vert \partial _{\eta }F_{0}[(gu_{a},0)]^{-1}\eta \Vert \leq c\Vert \eta
\Vert \quad \mbox{for each}\quad \eta \in W,~g\in G.
\end{equation*}
\end{lemma}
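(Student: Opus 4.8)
The plan is to read off invertibility from the explicit Fourier blocks. By \eqref{unreduced F} the operator acts diagonally, $\partial_{\eta}F_{0}[(u_{a},0)]\eta=\sum_{\ell\neq0}\hat{T}_{\ell}\hat{u}_{\ell}e_{\ell}$, and since the norm on $W$ is $\|\eta\|^{2}=\sum_{\ell\neq0}(\ell^{2}+1)\|\hat{u}_{\ell}\|^{2}$, the claimed estimate with constant $c$ is equivalent to the uniform lower bound
\[
\inf_{\ell\neq0}\sigma_{\min}(\hat{T}_{\ell})\ \geq\ c^{-1}>0,
\]
where $\sigma_{\min}$ is the smallest singular value. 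As $\hat{T}_{\ell}$ is block-diagonal, $\sigma_{\min}(\hat{T}_{\ell})=\min_{0\le j\le n}\sigma_{\min}(\hat{T}_{\ell,u_{j}})$, so I would bound each of the finitely many blocks below, uniformly in $\ell\neq0$.

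The $u_{0}$ block is immediate: by \eqref{T block}, $\hat{T}_{\ell,u_{0}}=\tfrac{\ell^{2}}{\ell^{2}+1}P_{E_{0}}\mathcal{M}_{0}|_{E_{0}}$, the compression of the positive definite matrix $\mathcal{M}_{0}$ to $E_{0}$ is invertible, and $\tfrac{\ell^{2}}{\ell^{2}+1}\geq\tfrac{1}{2}$ for $\ell\neq0$; the blocks with $j>n_{0}$ act on $E_{j}=\{0\}$ and are vacuous. The content lies in the blocks $\hat{T}_{\ell,u_{j}}$, $1\le j\le n_{0}$. Each is self-adjoint on $E_{j}^{\mathbb{C}}$ (the term $-2i(\ell/p_{j})\mathcal{M}_{j}\mathcal{J}_{k_{j}}$ is self-adjoint, since $i\mathcal{M}_{j}\mathcal{J}_{k_{j}}$ is), so invertibility amounts to the absence of a zero eigenvalue. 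This is precisely the $2\pi p_{j}$-nondegeneracy hypothesis: as reformulated after Definition \ref{Def1}, it asserts that $\hat{T}_{\ell,u_{j}}$ is invertible in $E_{j}^{\mathbb{C}}$ for every $\ell\neq0$. Hence each block has $\sigma_{\min}(\hat{T}_{\ell,u_{j}})>0$.

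The crux is upgrading this pointwise invertibility to a bound uniform in $\ell$. For this I would use the asymptotics of \eqref{T block}: as $|\ell|\to\infty$ the two lower-order terms are suppressed by the prefactor $\tfrac{1}{1+\ell^{2}}$, so
\[
\hat{T}_{\ell,u_{j}}\ \longrightarrow\ \tfrac{1}{p_{j}^{2}}\,P_{E_{j}}\mathcal{M}_{j}|_{E_{j}}\qquad\text{as }|\ell|\to\infty,
\]
and the limit is positive definite on $E_{j}$. By continuity of the smallest singular value, there is an $L$ with $\sigma_{\min}(\hat{T}_{\ell,u_{j}})\geq\tfrac{1}{2p_{j}^{2}}\sigma_{\min}(P_{E_{j}}\mathcal{M}_{j}|_{E_{j}})$ for all $|\ell|\geq L$, while for the finitely many $0<|\ell|<L$ the nondegeneracy gives strictly positive smallest singular values with positive minimum. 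Taking the minimum over the two regimes, and then over the finitely many $j$, yields a constant $c$ for which the estimate holds at $g=\mathrm{id}$.

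Finally I would spread the bound over the orbit by equivariance. Since $\nabla\mathcal{A}_{0}$ is $G$-equivariant and both $\mathcal{D}_{\varepsilon}$ (block-scalar) and $P_{W}$ (each $g$ rotates in space, commuting with the time-Fourier splitting and with the center-of-mass constraints) commute with the $G$-action, differentiation at $u_{a}$ gives the conjugation
\[
\partial_{\eta}F_{0}[(gu_{a},0)]=g\,\partial_{\eta}F_{0}[(u_{a},0)]\,g^{-1},\qquad g\in G.
\]
As $g$ acts by isometries on $W$, the inverses are conjugate with equal operator norm, so the same $c$ works for all $g\in G$. I expect the genuine difficulty to be exactly the $|\ell|\to\infty$ uniformity: nondegeneracy only delivers invertibility mode by mode, and it is the identification of the limit as the positive definite operator $\tfrac{1}{p_{j}^{2}}P_{E_{j}}\mathcal{M}_{j}|_{E_{j}}$ that prevents the smallest singular values from degenerating along the infinitely many Fourier modes. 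The hypothesis $\alpha\neq2$ enters only through the viability of the nondegeneracy assumption, since for $\alpha=2$ the homographic solutions force an extra kernel and the condition fails.
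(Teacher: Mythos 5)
Your proof is correct and follows essentially the same route as the paper's: blockwise invertibility of the $\hat{T}_{\ell,u_j}$ from the $2\pi p_j$-nondegeneracy hypothesis, uniformity in $\ell$ from the limits $\hat{T}_{\ell,u_j}\to p_j^{-2}P_{E_j}\mathcal{M}_j|_{E_j}$ and $\hat{T}_{\ell,u_0}\to P_{E_0}\mathcal{M}_0|_{E_0}$, and extension to the whole orbit $G(u_a)$ by conjugation under the isometric $G$-action. Your observation that the blocks are Hermitian (so that singular values coincide with eigenvalue moduli) is a small refinement making the passage from the paper's eigenvalue bound $|\lambda|\geq c^{-1}$ to the operator-norm estimate fully explicit, but it does not change the argument.
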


\begin{proof}
For $\ell \neq 0$, the block $\hat{T}_{\ell }$ in \eqref{G_0 blocks}
is always invertible and so are the blocks $\hat{T%
}_{\ell ,u_{j}}$ when $\ell \neq 0$ by assumption that $a_{j}$ is $%
2\pi p_{j}$-nondegenerate for $j=1,\dots ,n_{0}$. This implies that the
operator $\partial _{\eta }F_{0}[(u_{a},0)]$ is invertible on $W$ with
\begin{equation*}
\partial _{\eta }F_{0}[(u_{a},0)]^{-1}\eta =\sum_{\ell \neq 0}
\hat{T}_{\ell }^{-1}\hat{u}_{\ell }e_{\ell },\quad \eta \in W.
\end{equation*}%
Since $\hat{T}_{\ell ,u_{j}}\rightarrow \left( 1/p_{j}\right) ^{2}P_{E_j}\left.\mathcal{M}%
_{j}\right|_{E_j} $ for $j=1,\dots ,n$ and $\hat{T}_{\ell ,u_{0}}\rightarrow P_{E_0}\left.\mathcal{M}%
_{0}\right|_{E_0} $ when $\ell \rightarrow \infty $, it follows that 
\begin{equation}
\Vert \partial _{\eta }F_{0}[(u_{a},0)]^{-1}\eta \Vert \leq c\Vert \eta
\Vert .  \label{est}
\end{equation}%
where $c>0$ is a constant such that any eigenvalue $\lambda $ of $\hat{T}%
_{\ell }$ in \eqref{G_0 blocks} satisfies $\left\vert \lambda \right\vert
\geq c^{-1}$. Note that the Hessian $\nabla ^{2}\mathcal{A}_{0}[gu_{a}]$ is
conjugated to $\nabla ^{2}\mathcal{A}_{0}[u_{a}]$ as $\nabla \mathcal{A}%
_{0}$ is $G$-equivariant. This also holds for the constraint gradient because the $G$-action preserves the constraints. Hence $\partial _{\eta }F_{0}[(gu_{a},0)]$ and $%
\partial _{\eta }F_{0}[(u_{a},0)]$ are conjugated and the estimate %
\eqref{est} holds when replacing $u_{a}$ by $gu_{a}$ because $G$ acts by
isometries.
\end{proof}

\begin{remark}
\label{Rem} To explain further the meaning of the $2\pi p_{j}$-nondegeneracy
condition, we can consider the Hamiltonian system with Hamiltonian 
\begin{equation*}
H_{j}(u_{j},\pi _{j})=K_{j}-U_{j}
\end{equation*}%
for the $k_{j}$-body problem, where $\pi _{j}=\partial _{\dot{u}_{j}}L$ is
obtained from the Lagrangian $L_{j}=K_{j}+U_{j}$ defined in the space $E_{j}$
by means of the Legendre transformation. The relative equilibrium $a_{j}$ is
linearly stable if the eigenvalues of the linearized Hamiltonian vector
field $\mathfrak{J}^{-1}\nabla ^{2}H_{j}[(a_{j},0)]$ are all purely imaginary,
except by the double zero-eigenvalue corresponding to the generator of the
group orbit $\left( \mathcal{J}_{j}a_{j},0\right) $. On the other hands, our 
$2\pi p_{j}$-nondegenerate condition for $a_{j}$ can be verified similarly
according to the equivalent condition that the matrix $\mathfrak{J}%
^{-1}\nabla ^{2}H_{j}[(a_{j},0)]$ has no eigenvalues of the form $2\pi i\ell $
with $\ell \in \mathbb{Z}$, except by a double zero-eigenvalue corresponding
to the generator of the $U(1)$-orbit of $(a_{j},0)$.
\end{remark}

Unfortunately, the $2\pi p_j$-nondegenerate condition has not been verified
before in the literature for central configurations. In order to complement
our result, we verify this condition for an infinite number of polygonal
configurations in Section 4. We conjecture that, for $\alpha \neq 2$, the
condition of being $2\pi $-nondegenerate holds for a generic set of central
configurations in a set of parameters of masses.

\subsection*{Gravitational case $\protect\alpha=2$}

When $\alpha =2$ all the central configurations $a_{j}$ are $2\pi p_{j}$%
-nondegenerate due to the existence of elliptic homographic solutions, and
the matrices $\hat{T}_{\ell ,u_{j}}$ are never invertible for $\ell =\pm
p_{j}$. To study the case $\alpha =2$, we distinguish different type of
symmetric configurations under $2\pi /m$-rotations at the origin. Examples
of symmetric configurations that we can braid are the Maxwell configuration
and nested polygonal configurations. In these cases, we can only divide the
central body. Thus, we require the additional assumptions listed below.

\begin{itemize}
\item[\textbf{(C0)}] We consider the $N$-body problem with $N=n+k_{1}-1$,
i.e.%
\begin{equation*}
k_{1}>1,\qquad k_{2}=...=k_{n}=1. 
\end{equation*}%
Then $E_{j}=\{0\}$ for $j=2,\dots ,n$ and $\E=E_{0}\times
E_{1}$. A path $u\in X$ is then written as $u=(u_{0},u_{1})$. Denote by $S_{n}$ the permutation group of $n$ letters. We need a
group of symmetries $\Gamma $ that allows dealing with the resonances. We
consider $\Gamma <\mathbb{Z}_{m}\times S_{n}$ to be the discrete subgroup
generated by the element $(\theta ,\sigma )$ such that 
\begin{equation*}
\theta =2\pi /m\in \mathbb{Z}_{m},\qquad \sigma ^{m}=(1)\in S_{n},\qquad
\sigma (1)=1, 
\end{equation*}%
and that acts on the components of $u=(u_{0},u_{1})$ as follows: 
\begin{align*}
(\theta ,\sigma )u_{1}(s)& =u_{1}(s+\theta ), \\
(\theta ,\sigma )u_{0}(s)& =(\exp (-\theta J)u_{0,\sigma (1)}(s+\theta
),\dots ,\exp (-\theta J)u_{0,\sigma (n)}(s+\theta )).
\end{align*}

\item[\textbf{(C1)}] The next assumption is that the masses for the bodies $%
q_{j,1}$ for $j=2,\dots ,n$ satisfy 
\begin{equation*}
m_{j,1}=m_{\sigma (j),1}~,\qquad j=2,\dots ,n. 
\end{equation*}%
For the sake of simplicity, we also assume that $p_{1}=1$, which along with
conditions {\normalfont\textbf{(A)-(B)}} imply that 
\begin{equation*}
r_{1}=\varepsilon ,\qquad \omega _{1}=\varepsilon ^{-\left( \alpha +1\right)
/2},\qquad \nu =\varepsilon ^{-\left( \alpha +1\right) /2}-1. 
\end{equation*}
\end{itemize}

\begin{lemma}
\label{lem: alpha=2 invariance} For $\alpha =2$, under the assumptions $%
\normalfont{\textbf{(C0)-(C1)}}$, the functional $\mathcal{A}$ is $\Gamma $%
-invariant.
\end{lemma}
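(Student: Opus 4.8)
The plan is to verify invariance term by term in the splitting $\mathcal{A}=\mathcal{A}_0+\mathcal{H}$. Write $u'=(\theta,\sigma)u$ for the transformed path, so $u_{0,j}'(s)=\exp(-\theta J)u_{0,\sigma(j)}(s+\theta)$, $u_{1,k}'(s)=u_{1,k}(s+\theta)$, and $u_j'=0$ for $j\ge 2$. Under \textbf{(C0)} one has $\mathcal{E}=E_0\times E_1$ and $\mathcal{L}_j\equiv 0$ for $j\ge 2$ (empty potential, vanishing velocity since $u_j=0\in E_j=\{0\}$), so $\mathcal{A}_0(u)=\int_0^{2\pi}(\mathcal{L}_0+r_1^{1-\alpha}\mathcal{L}_1)\,ds$. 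As a preliminary I would note that $(\theta,\sigma)$ preserves $X$: the zero-center-of-mass constraints survive because $\exp(-\theta J)$ is linear and, by \textbf{(C1)} together with $\sigma(1)=1$, the total masses satisfy $M_j=M_{\sigma(j)}$ (for $j\ge 2$ this is $m_{j,1}=m_{\sigma(j),1}$, and $M_1=M_{\sigma(1)}$ is trivial).

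For the $\mathcal{L}_1$ term the action is a pure time translation $u_1(s)\mapsto u_1(s+\theta)$, and since $\mathcal{L}_1$ depends on $u_1,\dot u_1$ only pointwise (with the constant coefficient $r_1^{1-\alpha}$ factored out), integrating over the full period $[0,2\pi]$ leaves it unchanged by $2\pi$-periodicity. For the $\mathcal{L}_0$ term I would exploit that $\exp(-\theta J)$ is a constant rotation, hence an isometry of $E$ commuting with both $\partial_s$ and $J$. Componentwise this gives $(\nu\partial_s+J)u_{0,j}'(s)=\exp(-\theta J)\big[(\nu\partial_s+J)u_{0,\sigma(j)}\big](s+\theta)$, so the kinetic density $\tfrac12\sum_j M_j\|(\nu\partial_s+J)u_{0,j}'\|^2$ equals $\tfrac12\sum_j M_j\|[(\nu\partial_s+J)u_{0,\sigma(j)}](s+\theta)\|^2$; relabelling $i=\sigma(j)$ and using $M_{\sigma^{-1}(i)}=M_i$ returns the original density at time $s+\theta$. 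The potential $U_0$ is handled identically: $\exp(-\theta J)$ cancels inside each $\|u_{0,j}'-u_{0,j'}'\|$, the map $\{j,j'\}\mapsto\{\sigma(j),\sigma(j')\}$ permutes unordered pairs, and $M_jM_{j'}=M_{\sigma(j)}M_{\sigma(j')}$ preserves the weights. Integrating and using periodicity yields $\mathcal{A}_0(u')=\mathcal{A}_0(u)$.

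The substantive step is the coupling term $\mathcal{H}(u)=\int_0^{2\pi}h(u(s))\,ds$. I would first observe that under \textbf{(C0)} every summand of \eqref{h(u(s))} indexed by a pair $j,j'\ge 2$ of single-body clusters cancels between its two $\phi_\alpha$-terms, so only pairs with $j=1$ survive, giving $h(u)=\sum_{j'=2}^n\sum_{k\in K_1}m_{1,k}m_{j',1}\big(\phi_\alpha(\|u_{0,1}-u_{0,j'}+\varepsilon e^{sJ}u_{1,k}\|)-\phi_\alpha(\|u_{0,1}-u_{0,j'}\|)\big)$, where $p_1=1$ was used so that $\exp(p_1 sJ)=e^{sJ}$. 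The key algebraic identity is $e^{sJ}=\exp(-\theta J)\,e^{(s+\theta)J}$, valid precisely because $p_1=1$. Combined with $u_{0,j}'(s)=\exp(-\theta J)u_{0,\sigma(j)}(s+\theta)$ and $\sigma(1)=1$, the full vector inside the first $\phi_\alpha$ becomes $\exp(-\theta J)\big[u_{0,1}(s+\theta)-u_{0,\sigma(j')}(s+\theta)+\varepsilon e^{(s+\theta)J}u_{1,k}(s+\theta)\big]$; since $\exp(-\theta J)$ is an isometry it drops out of the norm, and the second $\phi_\alpha$-term transforms the same way. Relabelling $i'=\sigma(j')$ (a bijection of $\{2,\dots,n\}$ since $\sigma$ fixes $1$) and invoking $m_{j',1}=m_{\sigma(j'),1}$ from \textbf{(C1)} gives $h(u'(s))=h(u(s+\theta))$, whence $\mathcal{H}(u')=\mathcal{H}(u)$ after integrating over the period.

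The main obstacle is exactly this rotation-matching in $\mathcal{H}$: the center-of-mass action contributes the fixed rotation $\exp(-\theta J)$, whereas the cluster perturbation carries the time-dependent rotation $\exp(p_1 sJ)$, and the two reconcile only when $\exp(-p_1\theta J)=\exp(-\theta J)$. This is why the hypothesis $p_1=1$ in \textbf{(C1)} is essential (more generally $p_1\equiv 1\pmod m$ would suffice, since $\theta=2\pi/m$); without it the factor $\exp(-\theta J)$ produced by the $u_0$-action would fail to absorb the rotation attached to $u_{1,k}$, and $h$ would not be $\Gamma$-invariant. Everything else is bookkeeping resting on the isometry property of $\exp(-\theta J)$, the commutation of $J$ with rotations, the $\sigma$-invariance of the masses, and $2\pi$-periodicity.
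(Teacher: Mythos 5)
Your proof is correct and follows essentially the same route as the paper's: the $\mathcal{A}_0$ part reduces to rotation/permutation invariance of the uncoupled Lagrangians, and for $\mathcal{H}$ you reduce $h$ via \textbf{(C0)--(C1)}, use the identity $e^{sJ}=\exp(-\theta J)e^{(s+\theta)J}$ (i.e.\ $p_1=1$), the isometry of rotations, $\sigma(1)=1$, re-indexing of the sum, and a change of variables with $2\pi$-periodicity. Your added checks (that the $\Gamma$-action preserves the constraint space $X$ via $M_j=M_{\sigma(j)}$, and the remark that $p_1\equiv 1 \pmod m$ would suffice) are correct refinements of details the paper leaves implicit, not a different method.
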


\begin{proof}
Since the variables $u_{j}(s)$ for $j=0,\dots ,n$ are uncoupled in $\mathcal{%
A}_{0}$, it is an immediate consequence of the assumptions that the
functional $\mathcal{A}_{0}$ is $\Gamma $-invariant. It remains to show that
the coupling term $\mathcal{H}$ is $\Gamma $-invariant. By assumptions 
\textbf{(C0)-(C1)}, the integrand $h$ is obtained from \eqref{h(u(s))} after
setting $u_{j,k}=0$ whenever $j\geq 2$, and $p_{1}=1$. We get 
\begin{equation*}
h(u(s))=\sum_{j^{\prime }=2}^{n}\sum_{k\in K_{1}}m_{1,k}m_{j^{\prime
},1}\left( \phi _{\alpha }\left( \Vert u_{0,1}(s)-u_{0,j^{\prime
}}(s)+r_{1}\exp \left( s\mathcal{J}\right) u_{1,k}(s)\Vert \right) -\phi
_{\alpha }\left( \Vert u_{0,1}(s)-u_{0,j^{\prime }}(s)\Vert \right) \right)
. 
\end{equation*}%
Set $s^{\prime }=s+\theta $. Since $\sigma (1)=1$ and the norms are
invariant by rotations, we obtain 
\begin{align*}
h((\theta ,\sigma )u(s))& =\sum_{j^{\prime }=2}^{n}\sum_{k\in
K_{1}}m_{1,k}m_{\sigma (j^{\prime }),1}\left( \phi _{\alpha }\left( \Vert
u_{0,1}(s^{\prime })-u_{0,\sigma (j^{\prime })}(s^{\prime })+r_{1}\exp
\left( s^{\prime }\mathcal{J}\right) u_{1,k}(s^{\prime })\Vert \right)
\right. \\
& \left. -\phi _{\alpha }\left( \Vert u_{0,1}(s^{\prime })-u_{0,\sigma
(j^{\prime })}(s^{\prime })\Vert \right) \right) =h(u(s^{\prime }))
\end{align*}%
by re-indexing the sum at the end. Finally, 
\begin{equation*}
\mathcal{H(}(\theta ,\sigma )u)=\int_{0}^{2\pi }h((\theta ,\sigma
)u(s))ds=\int_{\theta }^{2\pi +\theta }h(u(s^{\prime }))ds^{\prime }=%
\mathcal{H}(u). 
\end{equation*}
\end{proof}

Thus, by the Palais Principle of Symmetric Criticality \cite{Palais}, we can
restrict the study of critical points to the fixed point space $X^{\Gamma }$%
. Notice that a path $u=(u_{0},u_{1})$ belongs to $X^{\Gamma }$ if and only
if $u(s)=(\theta ,\sigma )u(s)$. Thus 
\begin{equation*}
X^{\Gamma }=H^{1}(S^{1},E_{0})^{\Gamma }\oplus H^{1}(S^{1},E_{1})^{\Gamma }, 
\end{equation*}
where $H^{1}(S^{1},E_{1})^{\Gamma }$ is the Sobolev space of $2\pi /m$%
-periodic functions in $E_{1}$ and $H^{1}(S^{1},E_{0})^{\Gamma }$ is the
subspace of functions $u_{0}$ satisfying the symmetry $u_{0,\sigma (j)}=\exp
(-\theta \mathcal{J})u_{0,\sigma (j)}(s+\theta )$.

\begin{itemize}
\item[\textbf{(C2)}] The last assumption (to ensure that $u_{a}\in X^{\Gamma
}$) is that the central configurations $a_{0}\in E_{0}$ satisfies the
property 
\begin{equation}
a_{0,j}=\exp (-\theta \mathcal{J})a_{0,\sigma (j)}.
\end{equation}%
Since $\sigma ^{m}=1$ and $\theta =2\pi /m$, this condition implies that the
central configuration $a_{0}$ is symmetric by $2\pi /m$-rotations in the
plane, and since $\sigma (1)=1$ that 
\begin{equation*}
a_{0,1}=\exp (-\theta \mathcal{J})a_{0,1}=0. 
\end{equation*}
\end{itemize}

This condition holds true in many symmetric configurations: Maxwell
configuration, nested polygons with a center \cite{Mont15} and spiderwebs
with a center \cite{Webs}.

\begin{definition}
\label{Def2}The central configuration $a_{1}\in E_{1}$ is 
\defn{ $2\pi
/m$-nondegenerate} if the orbit $U(1)(a_{1})$ is a nondegenerate critical
manifold of the functional $A_{1}(u_{1})$ in the fixed point space $%
H^{1}(S^{1},E_{1})^{\Gamma }$.
\end{definition}

Thus $a_{1}$ is a $2\pi /m$-nondegenerate if the kernel of the Hessian of $A_{1}$ at $a_{1}$ in $H^{1}(S^{1},E_{1})^{\Gamma }$ {is }$\mbox{span}%
(\mathcal{J}_{k_{1}}a_{1})$. This weaker condition is equivalent to the
hypothesis that the matrices $\hat{T}_{\ell ,u_{1}}$ are
invertible in $E_{1}^{\mathbb{C}}$ for $\ell \in m\mathbb{Z}/\{0\}$ and $%
\hat{T}_{0,u_{1}}$ is invertible in the orthogonal
complement to $\mbox{span}(\mathcal{J}_{k_{1}}a_{1})$ in $E_{1}${.} We
conjecture that for $\alpha =2$, the condition of being $2\pi /m$%
-nondegenerate holds for a generic set of central configurations in the set of
parameters. This condition is verified for the $k$-polygonal configuration
in Section 4 for $k=4,...,1000$.

\begin{lemma}
Assume $\alpha =2$. Under the conditions {\normalfont\textbf{(C0)-(C2)}}, if 
$a_{1}$ is $2\pi /m$-nondegenerate for $m\geq 2$, the statement of Lemma \ref%
{lem: bounded inverse} holds true after replacing $W$ by the fixed point set 
$W^{\Gamma }$.
\end{lemma}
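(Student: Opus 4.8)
The plan is to reduce the $\alpha=2$ case to the argument already established in Lemma \ref{lem: bounded inverse} for $\alpha\neq2$, with the single modification that every operator is restricted to the $\Gamma$-fixed subspace $W^\Gamma$. The obstruction in the gravitational case is precisely that the blocks $\hat T_{\ell,u_1}$ fail to be invertible for $\ell=\pm p_1=\pm1$; the role of the symmetry group $\Gamma$ is to remove exactly these resonant Fourier modes from the domain. First I would record that, under the assumptions \textbf{(C0)-(C1)}, we have $\E=E_0\times E_1$ with $E_j=\{0\}$ for $j\geq2$, so $\hat T_\ell=\hat T_{\ell,u_0}\oplus\hat T_{\ell,u_1}$ and the whole analysis lives on these two blocks. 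Because Lemma \ref{lem: alpha=2 invariance} gives $\Gamma$-invariance of $\mathcal A$, the operator $\partial_\eta F_0[(u_a,0)]$ commutes with the $\Gamma$-action and therefore preserves $W^\Gamma$; thus it suffices to bound its inverse on $W^\Gamma$.

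Next I would identify which Fourier modes survive in $W^\Gamma$. For the $u_1$-component, $H^1(S^1,E_1)^\Gamma$ consists of $2\pi/m$-periodic paths, so $\eta$ contributes only Fourier modes $\ell\in m\mathbb Z$. Since $m\geq2$ and $p_1=1$, the resonant indices $\ell=\pm p_1=\pm1$ are \emph{not} multiples of $m$, hence are absent from $W^\Gamma$. The $2\pi/m$-nondegeneracy hypothesis (Definition \ref{Def2}) is exactly the statement that $\hat T_{\ell,u_1}$ is invertible on $E_1^{\mathbb C}$ for all $\ell\in m\mathbb Z\setminus\{0\}$, which are the only $u_1$-modes present in $W^\Gamma$. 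For the $u_0$-component the block $\hat T_{\ell,u_0}=\tfrac{\ell^2}{\ell^2+1}P_{E_0}\mathcal M_0|_{E_0}$ is positive-definite for every $\ell\neq0$ independently of $\alpha$, so those blocks remain invertible on whatever subspace of modes $H^1(S^1,E_0)^\Gamma$ selects. Consequently every block $\hat T_\ell$ that actually acts on $W^\Gamma$ is invertible.

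The uniform bound then follows exactly as in the proof of Lemma \ref{lem: bounded inverse}. I would write
\begin{equation*}
\partial_\eta F_0[(u_a,0)]^{-1}\eta=\sum_{\ell\neq0}\hat T_\ell^{-1}\hat u_\ell e_\ell,\qquad \eta\in W^\Gamma,
\end{equation*}
noting that the sum ranges only over the modes retained in $W^\Gamma$. The asymptotics $\hat T_{\ell,u_1}\to(1/p_1)^2P_{E_1}\mathcal M_1|_{E_1}$ and $\hat T_{\ell,u_0}\to P_{E_0}\mathcal M_0|_{E_0}$ as $\ell\to\infty$ are unchanged, so the eigenvalues of the relevant blocks are bounded away from zero by some $c^{-1}>0$, giving $\|\partial_\eta F_0[(u_a,0)]^{-1}\eta\|\leq c\|\eta\|$ on $W^\Gamma$. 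Finally, to obtain the estimate at a general group element $gu_a$, I would invoke the same conjugation argument: $\nabla^2\mathcal A_0[gu_a]$ is conjugate to $\nabla^2\mathcal A_0[u_a]$ by $G$-equivariance, $G$ acts by isometries preserving the constraints, and—crucially—the relevant $g$ here commute with $\Gamma$ so that the conjugation respects the decomposition $W=W^\Gamma\oplus(W^\Gamma)^\perp$.

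The main obstacle, and the only place where the argument genuinely differs from the $\alpha\neq2$ case, is verifying that $W^\Gamma$ excludes precisely the resonant modes $\ell=\pm1$; this is where the hypotheses $m\geq2$ and $p_1=1$ are used, and where the $\Gamma$-invariance established in Lemma \ref{lem: alpha=2 invariance} is indispensable. I would make sure to state explicitly that the case $m=1$ is excluded (since then $m\mathbb Z=\mathbb Z$ would reintroduce the resonance), which is why the hypothesis $m\geq2$ appears in the statement.
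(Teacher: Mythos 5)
Your proposal is correct and follows essentially the same route as the paper's proof: decompose $\hat T_\ell=\hat T_{\ell,u_0}\oplus\hat T_{\ell,u_1}$ under \textbf{(C0)-(C1)}, observe that paths in $W^\Gamma$ carry only $u_1$-Fourier modes $\ell\in m\mathbb{Z}$ so the $2\pi/m$-nondegeneracy hypothesis supplies invertibility of exactly the blocks that act, and then transfer the uniform bound of Lemma \ref{lem: bounded inverse} via the $G$-conjugation, using that the $G$- and $\Gamma$-actions commute. Your explicit remarks that $m\geq 2$ with $p_1=1$ removes precisely the resonant modes $\ell=\pm 1$ only make visible what the paper leaves implicit, so there is no substantive difference.
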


\begin{proof}
By conditions \textbf{(C0)-(C1)}, 
\begin{equation*}
\hat{T}_{\ell }=\hat{T}_{\ell ,u_{0}}\oplus \hat{T}_{\ell ,u_{1}}
\end{equation*}%
because $u=(u_{0},u_{1})$. Since $a_{1}$ is $2\pi /m$-nondegenerate, the
matrix $\hat{T}_{\ell ,u_{1}}$ is invertible for the
Fourier modes $\ell =0,\pm m,\pm 2m,\dots $. A path $u=(u_{0},u_{1})$
belongs to $X^{\Gamma }$ if $u_{1}$ is $2\pi /m$-periodic. In particular,
the Fourier expansion of $u_{1}$ is fixed by $\Gamma $ only if 
\begin{equation*}
\hat{u}_{1,\ell }=0\text{ for }\ell \neq 0,\pm m,\pm 2m,\dots 
\end{equation*}%
and hence the operator $\partial _{\eta }F_{0}[(u_{a},0)]$ is invertible on $%
W^{\Gamma }$. The argument in the proof of Lemma \ref{lem: bounded inverse}
applies now in the fixed point space. Moreover, note that the group action
of $G$ commutes with that of $\Gamma $. Hence by $\mathbf{(C2)}$, the orbit $%
G(u_{a})$ belongs to $X_{0}^{\Gamma }\subset X^{\Gamma }$ and the functional 
$\mathcal{A}_{0}$ restricted to $X^{\Gamma }$ is still $G$-invariant.
\end{proof}

\subsection*{Lyapunov-Schmidt reduction}

Because of  Lemmas 3.1, 3.2 and 3.3, we can perform a Lyapunov-Schmidt
reduction as in Theorem 3.2, 3.3 and 3.4 in \cite{Braids}.

\begin{theorem}[Lyapunov-Schmidt reduction and uniform estimates ]
Under conditions {\normalfont\textbf{(A)-(B)}} and $\alpha \neq 2$, there is 
$\varepsilon _{0}>0$ such that for every $\varepsilon \in (0,\varepsilon
_{0})$, there is a $G$-invariant neighbourhood $\mathcal{V}\subset X_0$ of 
$u_{a}$ and an analytic $H$-equivariant mapping $\varphi _{\varepsilon }:%
\mathcal{V}\subset X_0 \rightarrow W$ such that $P_W\nabla \mathcal{A}(\xi +\eta )=0$ for $%
\xi \in \mathcal{V}$, $\xi+\eta\in X$ if and only if $\eta =\varphi _{\varepsilon }(\xi )$.
The system reduces to the finite-dimensional system $P_{X_0}\nabla \Psi_{\varepsilon}(\xi)=0$ in $\mathcal{V}$ where $\Psi _{\varepsilon }(\xi )=%
\mathcal{A}(\xi +\varphi _{\varepsilon }(\xi ))$, and $P_{X_0}$ is the projection of $X$ to $X_0$ composed with $P_X$. Furthermore, for each $\xi
\in \mathcal{V}$ and $g\in G$, the following estimate holds: 
\begin{equation}
\Vert \varphi _{\varepsilon }(\xi )\Vert \leq N_{1}(\varepsilon +\Vert \xi
-gu_{a}\Vert ^{2}).
\end{equation}%
If $\alpha =2$, the same result holds under the additional conditions {%
\normalfont\textbf{(C0)-(C2)}} after replacing $X_{0}$ and $W$ by their $%
\Gamma $-fixed point spaces.
\end{theorem}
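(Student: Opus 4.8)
The plan is to solve the equation $F_\varepsilon(\xi,\eta)=P_W\mathcal{D}_\varepsilon\nabla\mathcal{A}(\xi+\eta)=0$ for $\eta\in W$ as a function of $\xi$ by the analytic Implicit Function Theorem, carrying the constants uniformly through the limit $\varepsilon\to0$. First I would record the regularity of $F_\varepsilon$: on the collision-less set $\Omega$ the potentials $\phi_\alpha$ are real-analytic and $(-\partial_s^2+1)^{-1}$ is compact, so $F_\varepsilon$ is analytic in $(\xi,\eta)$; moreover, since $\mathcal{D}_\varepsilon\nabla\mathcal{H}=\mathcal{O}(\varepsilon)$ and the coefficients $(\nu/\omega_j)^2\to(1/p_j)^2$, the map extends continuously to $\varepsilon=0$ with limit $F_0$ as in \eqref{f0}. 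The crucial input is Lemma \ref{lem: bounded inverse}: $\partial_\eta F_0[(gu_a,0)]$ is invertible on $W$ with a bound $c$ that is uniform in $g\in G$ because $G$ acts by isometries. By joint continuity of $\partial_\eta F_\varepsilon$ in $(\xi,\eta,\varepsilon)$ and compactness of the orbit $G(u_a)$, there exist $\varepsilon_0>0$ and a tube radius $\rho>0$ such that $\partial_\eta F_\varepsilon[(\xi,\eta)]$ stays invertible with bound $2c$ whenever $\mathrm{dist}(\xi,G(u_a))<\rho$, $\|\eta\|\le\rho$ and $\varepsilon<\varepsilon_0$.

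Next I would produce $\varphi_\varepsilon$. For each $\xi$ in the $G$-invariant tube $\mathcal{V}:=\{\xi\in X_0:\mathrm{dist}(\xi,G(u_a))<\rho\}$ the fixed-point map $\eta\mapsto\eta-[\partial_\eta F_\varepsilon(\xi,0)]^{-1}F_\varepsilon(\xi,\eta)$ is a contraction on a small ball of $W$, giving a unique small solution $\eta=\varphi_\varepsilon(\xi)$; the analytic version of the Implicit Function Theorem upgrades this to analytic dependence on $\xi$. Because $\mathcal{A}$ is invariant under the diagonal group $H$, the operator $F_\varepsilon$ is $H$-equivariant, and uniqueness forces $\varphi_\varepsilon(h\xi)=h\varphi_\varepsilon(\xi)$, while the tube $\mathcal{V}$ is $G$-invariant by construction. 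The reduction identity then follows in the standard way: on $\mathcal{V}$ one has $P_W\nabla\mathcal{A}(\xi+\varphi_\varepsilon(\xi))=0$ by construction, and differentiating $\Psi_\varepsilon(\xi)=\mathcal{A}(\xi+\varphi_\varepsilon(\xi))$ shows, using $\delta\xi\in X_0$, $D\varphi_\varepsilon(\xi)\delta\xi\in W$ and the vanishing of the $W$-component, that $\nabla\Psi_\varepsilon(\xi)=P_{X_0}\nabla\mathcal{A}(\xi+\varphi_\varepsilon(\xi))$, so the full projected system is equivalent to $P_{X_0}\nabla\Psi_\varepsilon(\xi)=0$.

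The estimate rests on one observation: for a constant path $\xi\in X_0$ the rotational kinetic terms collapse via $\mathcal{J}^2=-I$, and $(-\partial_s^2+1)^{-1}$ acts as the identity, so $\nabla\mathcal{A}_0(\xi)$ is again a constant, i.e. it lies in $X_0$. Hence $F_\varepsilon(\xi,0)=P_W\mathcal{D}_\varepsilon\nabla\mathcal{A}_0(\xi)+P_W\mathcal{D}_\varepsilon\nabla\mathcal{H}(\xi)=0+\mathcal{O}(\varepsilon)$, uniformly for $\xi\in\mathcal{V}$. Writing $\varphi_\varepsilon(\xi)=-[\partial_\eta F_\varepsilon(\xi,0)]^{-1}\bigl(F_\varepsilon(\xi,0)+R_\varepsilon(\xi,\varphi_\varepsilon(\xi))\bigr)$ with remainder quadratic in $\eta$, $\|R_\varepsilon(\xi,\eta)\|\le C\|\eta\|^2$, and bootstrapping for small $\varepsilon$ yields $\|\varphi_\varepsilon(\xi)\|\le N_0\varepsilon$, which in particular implies the claimed bound $\|\varphi_\varepsilon(\xi)\|\le N_1(\varepsilon+\|\xi-gu_a\|^2)$ for every $g\in G$. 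For the gravitational case $\alpha=2$ the very same scheme applies after restricting to the $\Gamma$-fixed point spaces $X_0^\Gamma$ and $W^\Gamma$: by the Palais Principle of Symmetric Criticality the problem restricts to $X^\Gamma$, and the $\Gamma$-fixed point analogue of Lemma \ref{lem: bounded inverse} supplies the uniform invertibility of $\partial_\eta F_\varepsilon$ on $W^\Gamma$.

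The delicate point, and the reason this is not a routine application of the Implicit Function Theorem, is securing the uniform-in-$\varepsilon$ invertibility of $\partial_\eta F_\varepsilon$: the unperturbed action $\mathcal{A}_0$ blows up at different scales as $\varepsilon\to0$, since its Hessian blocks carry the factors $\nu^{-2}$ and $r_j^{\alpha-1}$ of different orders, so invertibility must be read off from the regularized limit $F_0$ rather than from $\mathcal{A}_0$ itself. The matrix $\mathcal{D}_\varepsilon$ is chosen precisely so that this limit exists and its block symbols $\hat{T}_\ell$ have eigenvalues bounded away from zero uniformly in $\ell$; propagating these bounds uniformly over the compact orbit $G(u_a)$ is where the argument, exactly as in the reductions of \cite{Braids}, does its real work.
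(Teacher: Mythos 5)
Your proposal is correct and follows essentially the same route as the paper, which itself defers the proof to the Lyapunov--Schmidt scheme of Theorems 3.2--3.4 in \cite{Braids}: uniform invertibility of $\partial_\eta F_0$ on $W$ (resp.\ $W^\Gamma$) from Lemmas \ref{lem: bounded inverse} and 3.3, persistence of invertibility for small $\varepsilon$ over the compact orbit $G(u_a)$ (note $X_0$ is finite-dimensional, so your tube argument is legitimate), the analytic implicit function theorem with $H$-equivariance forced by uniqueness, and the standard identity $\nabla\Psi_\varepsilon(\xi)=P_{X_0}\nabla\mathcal{A}(\xi+\varphi_\varepsilon(\xi))$. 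Your derivation of the estimate is in fact slightly sharper than the stated one: by observing that the unperturbed Euler--Lagrange operator maps constant paths to constant paths, so that $P_W\mathcal{D}_\varepsilon\nabla\mathcal{A}_0(\xi)=0$ identically on $X_0$ and $F_\varepsilon(\xi,0)=P_W\mathcal{D}_\varepsilon\nabla\mathcal{H}(\xi)=\mathcal{O}(\varepsilon)$ uniformly on the collisionless tube, you obtain $\Vert\varphi_\varepsilon(\xi)\Vert\leq N_0\varepsilon$, which trivially implies the bound $N_1(\varepsilon+\Vert\xi-gu_a\Vert^2)$ obtained in \cite{Braids} by Taylor expansion of $F_\varepsilon$ about the orbit.
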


We set $\xi =(\xi _{0},\xi _{1},\dots ,\xi _{n})$ where $\xi _{j}\in E_{j}$
are coordinates for the $k_{j}$-body problems, and $\xi _{0}\in E_{0}$ are
coordinates for the $n$-body problem formed by the centers of mass of the $%
n $ clusters. Once we fix $\varepsilon \in (0,\varepsilon _{0})$, the
function $\Psi _{\varepsilon }=\mathcal{V}\subset X_0\rightarrow \mathbb{R}$ is of the
form $\Psi _{\varepsilon }(\xi )=\mathcal{A}_{0}(\xi )+\mathcal{N}(\xi )$
where 
\begin{equation}
\mathcal{A}_{0}(\xi )=2\pi \left( V_{0}(\xi
_{0})+\sum_{j=1}^{n}r_{j}^{1-\alpha }V_{j}(\xi _{j})\right) .  \notag
\label{A0 reduced}
\end{equation}%
is $G$-invariant, and 
\begin{equation*}
\mathcal{N}(\xi )=\mathcal{A}_{0}(\xi +\varphi _{\varepsilon }(\xi ))-%
\mathcal{A}_{0}(\xi )+\mathcal{H}(\xi +\varphi _{\varepsilon }(\xi ))
\end{equation*}%
is $H$-invariant and satisfies the estimate

\begin{equation}
\Vert \mathcal{C}_{\varepsilon }\nabla \mathcal{N}(\xi )\Vert \leq
N(\varepsilon +\Vert \xi -gu_{a}\Vert ^{2}) , \label{estimate N}
\end{equation}%
where $\mathcal{C}_{\varepsilon }=\mathcal{I}_{n}\oplus r_{1}^{\alpha -1}%
\mathcal{I}_{k_{1}}\oplus \dots \oplus r_{n}^{\alpha -1}\mathcal{I}_{k_{n}}$ and $g\in G$%
. The proof of this estimate follows the same steps as in Theorem
3.5 in our previous work \cite{Braids}.

\subsection*{Symmetry reduction}

Even if the problem is now finite dimensional, it is still not possible to
continue the solutions of $P_{X_0}\nabla \Psi _{\varepsilon }(\xi )=0$ from $%
\varepsilon =0$ because $\Psi _{\varepsilon }$ still blows up when $%
\varepsilon \rightarrow 0$. We obtain a regular function by passing to the
quotient space under the action of $H$ on $\mathcal{V}$. Accordingly we
write 
\begin{equation}
\xi ^{\prime }=(\xi _{1},\dots ,\xi _{n})\in E^{\prime }=E_{1}\times \dots
\times E_{n}
\end{equation}%
and $a^{\prime }=(a_{1},\dots ,a_{n})\in E^{\prime }$. The gradient equation 
$P_{X_0}\nabla \Psi _{\varepsilon }(\xi _{0},\xi ^{\prime })=0$ splits into two
parts 
\begin{equation}
P_{E_0}\nabla\Psi _{\varepsilon }(\xi _{0},\xi ^{\prime })=0\quad %
\mbox{and}\quad P_{E^{\prime }}\nabla\Psi _{\varepsilon }(\xi _{0},\xi
^{\prime })=0.  \label{reduced eq}
\end{equation}%
We perform a second reduction to express the regular part $\xi _{0}$ with respect to the singular part $\xi ^{\prime }$. We solve  $\xi
_{0}(\xi ^{\prime },\varepsilon )$ from the equation $P_{E_0}\nabla \Psi
_{\varepsilon }(\xi _{0},\xi ^{\prime })=0$. 

The group $G=G_{0}\times
G^{\prime }$ with $G_{0}=U(1)$ and $G^{\prime }=U(1)^{n}$ acts diagonally on 
$E_{0}\times E^{\prime }$. We define the one-codimensional subspace of the regular part $E_{0}$, 
\begin{equation}
E_{0}^{\prime }=\left\{ \zeta _{0}=(\rho _{0,1}e^{i\theta _{0,1}},\dots
,\rho _{0,n}e^{i\theta _{0,n}})\mid \sum_{j=1}^{n}\theta _{0,j}=0\right\} .
\end{equation}%
For every $\xi _{0}\in E_{0}$ we can find $h\in H=\widetilde{U(1)}$
such that $\xi _{0}=h\zeta _{0}$ for some $\zeta _{0}\in E_{0}^{\prime }$.
Setting $\xi ^{\prime }=h\zeta ^{\prime }$ one uses $H$-invariance to get 
\begin{equation*}
\Psi _{\varepsilon }(\xi _{0},\xi ^{\prime })=\Psi _{\varepsilon }(h\zeta
_{0},h\zeta ^{\prime })=\Psi _{\varepsilon }(\zeta _{0},\zeta ^{\prime }).
\end{equation*}%
The function $\Psi _{\varepsilon }$ now only depends on $(\zeta _{0},\zeta
^{\prime })\in E_{0}^{\prime }\times E^{\prime }$ and \eqref{reduced eq}
become 
\begin{equation}
P_{E_0^{\prime}}\nabla \Psi _{\varepsilon }(\zeta _{0},\zeta ^{\prime })=0\quad %
\mbox{and}\quad P_{E^{\prime}}\nabla\Psi _{\varepsilon }(\zeta
_{0},\zeta ^{\prime })=0.  \label{reduced eq2}
\end{equation}

\begin{definition}
\label{Def3}The central configuration $a_{0}\in E_{0}$ is \textbf{%
nondegenerate} if the orbit $U(1)(a_{0})$ is a nondegenerate critical manifold of the
amended potential $\left.V_{0}\right|_{E_0}:E_{0}\rightarrow \mathbb{R}$ defined in %
\eqref{defn: amended potential}.
\end{definition}

In this case, the kernel of $P_{E_0}\left.\nabla _{u_{0}}^{2}V_{0}[a_{0}]\right|_{E_0}$ is generated
by $\mathcal{J}_{n}a_{0}$. This is
equivalent to the assumption that the matrix $\hat{T}_{0,u_{0}}$ in %
\eqref{G_0 blocks} is invertible in a complement to $\mbox{Span}(\mathcal{J}%
_{n}a_{0})$ in $E_0$.

\begin{theorem}
\label{main regular} Suppose that $a_{0}\in E_{0}$ is nondegenerate. Then
for $\varepsilon \in (0,\varepsilon _{0})$, the critical points of $\Psi
_{\varepsilon }(\zeta _{0},\zeta ^{\prime })$ in the (possibly smaller)
neighbourhood $\mathcal{V}$ are in one to one correspondence with the
critical points of the function $\Psi _{\varepsilon }^{\prime }:\mathcal{V}%
^{\prime }\subset E^{\prime }\rightarrow \mathbb{R}$ given by 
\begin{equation*}
\Psi _{\varepsilon }^{\prime }(\zeta ^{\prime })=\sum_{j=1}^{n}\left(
r_{j}/\varepsilon \right) ^{1-\alpha }V_{j}(\zeta _{j})+\mathcal{N}^{\prime
}(\zeta ^{\prime })\text{,}
\end{equation*}%
where $\mathcal{V}^{\prime }\subset E^{\prime}$ is a neighbourhood of the
orbit $G^{\prime }(a^{\prime })$, $V_{j}(\zeta _{j})$ is the amended
potential and 
\begin{equation}
\mathcal{N}^{\prime }(\zeta ^{\prime })=\varepsilon ^{\alpha -1}\left( \frac{%
1}{2\pi }\mathcal{A}_{0}(\zeta _{0}(\zeta ^{\prime },\varepsilon ),\zeta
^{\prime })-\sum_{j=1}^{n}r_{j}^{1-\alpha }V_{j}(\zeta _{j})\right) +\frac{%
\varepsilon ^{\alpha -1}}{2\pi }\mathcal{N}(\zeta _{0}(\zeta ^{\prime
},\varepsilon ),\zeta ^{\prime }),  \label{defn= N'}
\end{equation}%
where $\zeta _{0}(\cdot ,\varepsilon ):\mathcal{V}^{\prime
}\subset E^{\prime}\rightarrow \mathbb{R}$ is unique such that $P_{E_0^{\prime}}\nabla \Psi _{\varepsilon }(\zeta _{0}(\zeta ^{\prime },\varepsilon
),\zeta ^{\prime })=0.$
\end{theorem}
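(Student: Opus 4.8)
The plan is to carry out a third Lyapunov--Schmidt reduction, this time eliminating the \emph{regular} variable $\zeta_0$ in favour of the \emph{singular} variables $\zeta'$. The two equations in \eqref{reduced eq2} are the $E_0'$- and $E'$-components of $\nabla\Psi_\varepsilon$, so $(\zeta_0,\zeta')$ is a critical point of $\Psi_\varepsilon$ exactly when both vanish. First I would solve $P_{E_0'}\nabla\Psi_\varepsilon(\zeta_0,\zeta')=0$ for $\zeta_0=\zeta_0(\zeta',\varepsilon)$ by the implicit function theorem, substitute back, and finally multiply the resulting function by the positive constant $\varepsilon^{\alpha-1}/2\pi$ to obtain the regular function $\Psi_\varepsilon'$.

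The decisive structural observation is that the terms of $\mathcal A_0$ which blow up as $\varepsilon\to0$, namely $\sum_j r_j^{1-\alpha}V_j(\zeta_j)$, do not depend on $\zeta_0$; hence the $\zeta_0$-equation is genuinely regular. Indeed $P_{E_0'}\nabla_{\zeta_0}\Psi_\varepsilon=2\pi\,P_{E_0'}\nabla V_0(\zeta_0)+P_{E_0'}\nabla_{\zeta_0}\mathcal N$, and since the $\zeta_0$-block of $\mathcal C_\varepsilon$ is the identity, \eqref{estimate N} gives $\nabla_{\zeta_0}\mathcal N=\mathcal O(\varepsilon+\|\xi-gu_a\|^2)$, which in particular vanishes to second order along the orbit at $\varepsilon=0$. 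Thus at $\varepsilon=0$, $\zeta'=g'a'$ the equation reduces to $\nabla V_0(\zeta_0)=0$, solved by $\zeta_0=a_0$, and its $\zeta_0$-linearisation there is exactly $2\pi\,P_{E_0'}\nabla^2V_0[a_0]|_{E_0'}$. By Definition \ref{Def3} the kernel of $P_{E_0}\nabla^2V_0[a_0]|_{E_0}$ is $\mathrm{span}(\mathcal J_n a_0)$, the generator of the $U(1)$-action on $E_0$; since the slice $E_0'$ was built transversally to this very direction, the restriction to $E_0'$ is an isomorphism. Running the implicit function theorem at each point of the compact orbit $G'(a')$ and patching by local uniqueness, I obtain $\zeta_0(\cdot,\varepsilon)\colon\mathcal V'\to E_0'$ on a neighbourhood of $G'(a')$ with $\zeta_0(\zeta',\varepsilon)=a_0+\mathcal O(\varepsilon)$.

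Next I form $\widetilde\Psi_\varepsilon(\zeta'):=\Psi_\varepsilon(\zeta_0(\zeta',\varepsilon),\zeta')$. Differentiating and using $P_{E_0'}\nabla\Psi_\varepsilon(\zeta_0(\zeta'),\zeta')=0$, the $\partial_{\zeta'}\zeta_0$-contribution drops, so $\nabla_{\zeta'}\widetilde\Psi_\varepsilon=P_{E'}\nabla\Psi_\varepsilon(\zeta_0(\zeta'),\zeta')$; hence $\zeta'$ is critical for $\widetilde\Psi_\varepsilon$ iff $(\zeta_0(\zeta',\varepsilon),\zeta')$ solves both equations of \eqref{reduced eq2}, which is the asserted one-to-one correspondence. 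It remains to check that $\Psi_\varepsilon'$ defined through \eqref{defn= N'} equals $(\varepsilon^{\alpha-1}/2\pi)\widetilde\Psi_\varepsilon$, a strictly positive multiple, so the critical sets coincide. This is bookkeeping: writing $\tfrac1{2\pi}\mathcal A_0(\zeta_0,\zeta')=V_0(\zeta_0)+\sum_j r_j^{1-\alpha}V_j(\zeta_j)$, the parenthesis in \eqref{defn= N'} collapses to $V_0(\zeta_0(\zeta',\varepsilon))$, and adding $\varepsilon^{\alpha-1}\sum_j r_j^{1-\alpha}V_j(\zeta_j)=\sum_j(r_j/\varepsilon)^{1-\alpha}V_j(\zeta_j)$ reassembles $(\varepsilon^{\alpha-1}/2\pi)\widetilde\Psi_\varepsilon$ exactly. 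The rescaling is precisely what regularises the problem: $(r_j/\varepsilon)^{1-\alpha}\to p_j^{-2(1-\alpha)/(\alpha+1)}$ stays bounded, while the gradient of $\mathcal N'=\varepsilon^{\alpha-1}V_0(\zeta_0(\zeta'))+(\varepsilon^{\alpha-1}/2\pi)\mathcal N$ is controlled by \eqref{estimate N} together with the convergence of $(r_j/\varepsilon)^{1-\alpha}$, so $\Psi_\varepsilon'$ no longer blows up.

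The main obstacle is not the formal reduction but securing the implicit function theorem \emph{uniformly} in $\varepsilon$ down to $\varepsilon=0$: I need the $\zeta_0$-Hessian $2\pi P_{E_0'}\nabla^2V_0[a_0]|_{E_0'}+P_{E_0'}\nabla_{\zeta_0}^2\mathcal N$ to remain invertible on a fixed neighbourhood of $G'(a')$ for all small $\varepsilon$. Since \eqref{estimate N} controls only $\nabla\mathcal N$, I would upgrade it to a $C^1$-smallness of $\nabla\mathcal N$---either from the analyticity of $\varphi_\varepsilon$ via Cauchy estimates, or from the derivative bounds accompanying the estimate in \cite{Braids}---so that $\nabla_{\zeta_0}^2\mathcal N=\mathcal O(\varepsilon)$ near the orbit and the perturbed Hessian stays an isomorphism with an $\varepsilon$-independent bound. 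Compactness of $G'(a')$ then produces the uniform $\varepsilon_0$ and the neighbourhood $\mathcal V'$ in the statement.
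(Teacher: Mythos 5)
Your proposal is correct and takes essentially the same route as the paper: solve the slice equation $P_{E_0^{\prime}}\nabla \Psi _{\varepsilon }=0$ for $\zeta _{0}(\zeta ^{\prime },\varepsilon )$ by the implicit function theorem, using that the singular terms $\sum_{j}r_{j}^{1-\alpha }V_{j}$ are $\zeta _{0}$-independent, that the $\zeta _{0}$-block of $\mathcal{C}_{\varepsilon }$ is the identity in \eqref{estimate N}, and that nondegeneracy of $a_{0}$ makes $P_{E_0^{\prime}}\nabla ^{2}V_{0}[a_{0}]|_{E_0^{\prime}}$ invertible on the slice transversal to $\mathcal{J}_{n}a_{0}$, then substitute back and rescale by the positive factor $\varepsilon ^{\alpha -1}/2\pi $, exactly as in the paper's proof. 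Your closing paragraph on upgrading \eqref{estimate N} to second-derivative control of $\mathcal{N}$ is in fact slightly more careful than the paper, which asserts $\lim_{\varepsilon \rightarrow 0}P_{E_0^{\prime}}\mathcal{C}_{\varepsilon }\nabla ^{2}\mathcal{N}[a_{0},g^{\prime }a^{\prime }]=0$ without further comment.
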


\begin{proof}
Consider the equations obtained in \eqref{reduced eq2} 
\begin{equation*}
P_{E_0^{\prime}}\nabla \Psi _{\varepsilon }(\zeta _{0},\zeta ^{\prime })=0\quad %
\mbox{and}\quad P_{E^{\prime}}\nabla\Psi _{\varepsilon }(\zeta
_{0},\zeta ^{\prime })=0. 
\end{equation*}%
The uniform estimate 
\begin{equation*}
\Vert P_{X_0}\mathcal{C}_{\varepsilon }\nabla \mathcal{N}\left( \zeta _{0},\zeta
^{\prime }\right) \Vert \leq\Vert \mathcal{C}_{\varepsilon }\nabla \mathcal{N}\left( \zeta _{0},\zeta
^{\prime }\right) \Vert \leq N\left( \Vert \zeta ^{\prime
}-g^{\prime}a^{\prime }\Vert ^{2}+\Vert \zeta _{0}-a_{0}\Vert
^{2}+\varepsilon \right)\quad\mbox{for each}\quad g'\in G'
\end{equation*}%
implies that $$\lim_{\varepsilon \rightarrow 0} P_{E_0^{\prime}}\mathcal{C}_{\varepsilon }\nabla\mathcal{N}%
(a_{0},g^{\prime }a^{\prime })=0$$ and $\lim_{\varepsilon \rightarrow
0}P_{E_0^{\prime}}\mathcal{C}_{\varepsilon }\nabla ^{2}\mathcal{N}[a_{0},g^{\prime }a^{\prime }]=0$ for
each $g^{\prime }\in G'$. This is because the scaling matrix $\mathcal{C}_{\varepsilon }$ acts as the identity on the component $\zeta_0$. In particular, $P_{E_0^{\prime}}\nabla\Psi
_{0}( a_{0},g^{\prime }a^{\prime })=0$ and the Hessian 
\begin{equation*}
P_{E_0^{\prime}}\left.\nabla^{2}\Psi _{0}[a_{0},g^{\prime }a]\right|_{E_0^{\prime}}=P_{E_0^{\prime}}\left.\nabla^{2}V_{0}[a_{0}]\right|_{E_0^{\prime}}
\end{equation*}%
is non-singular on $E_0^{\prime}$ by the assumption that $a_{0}$ is nondegenerate. Using the
implicit function theorem and the compactness of $G^{\prime }$, there is,
for each $\varepsilon $ sufficiently small, a smooth function $\zeta
_{0}(\zeta ^{\prime },\varepsilon )$ defined on a neighbourhood $\mathcal{V}%
^{\prime }\subset E^{\prime }$ of the orbit $G^{\prime }(a^{\prime })$ such
that 
\begin{equation}
P_{E_0^{\prime}}\nabla\Psi _{\varepsilon }(\zeta _{0}(\zeta ^{\prime
},\varepsilon ),\zeta ^{\prime })=0  \label{zeta0 equation}
\end{equation}%
on this neighbourhood and $\zeta _{0}(a^{\prime },0)=a_{0}$. Hence, when we
fix $\varepsilon \in (0,\varepsilon _{0})$ with $\varepsilon _{0}$ possibly
smaller, and take a smaller neighbourhood $\mathcal{V}\subset
E^{\prime}_{0}\times E^{\prime }$, the critical points of $\Psi
_{\varepsilon }(\zeta _{0},\zeta ^{\prime })$ in $\mathcal{V}$ are in one to
one correspondence with the critical points of the function $\Psi
_{\varepsilon }^{\prime }:\mathcal{V}^{\prime }\subset E^{\prime
}\rightarrow \mathbb{R}$ given by 
\begin{equation*}
\Psi _{\varepsilon }^{\prime }(\zeta ^{\prime }):=\frac{\varepsilon ^{\alpha
-1}}{2\pi }\left( \mathcal{A}_{0}(\zeta _{0}(\zeta ^{\prime },\varepsilon
),\zeta ^{\prime })+\mathcal{N}(\zeta _{0}(\zeta ^{\prime },\varepsilon
),\zeta ^{\prime })\right) =\sum_{j=0}^{n}\left( r_{j}/\varepsilon \right)
^{1-\alpha }V_{j}(\zeta _{j})+\mathcal{N}^{\prime }(\zeta ^{\prime }).
\end{equation*}%
These are solutions of the equation $P_{E^{\prime}}\nabla\Psi _{\varepsilon }^{\prime }(\zeta ^{\prime })=0$.
Note that the regularizing factor of $\varepsilon ^{\alpha -1}/2\pi $ leaves
the equation \eqref{zeta0 equation} unchanged when we fix $\varepsilon>0$.
\end{proof}

\begin{lemma}
\label{lemma: estimate N'} There are constants $N^{\prime },N^{\prime \prime
}>0$ such that, for each $g^{\prime }\in G'$, 
\begin{equation*}
\Vert \zeta _{0}(\zeta ^{\prime },\varepsilon )- a_{0}\Vert \leq N^{\prime
}(\varepsilon +\Vert \zeta ^{\prime }-g^{\prime }a^{\prime }\Vert ^{2})\quad %
\mbox{and}\quad \Vert P_{E^{\prime}}\nabla_{\zeta
^{\prime }}\mathcal{N}(\zeta ^{\prime })\Vert \leq N^{\prime \prime }(\varepsilon +\Vert \zeta ^{\prime
}-g^{\prime }a^{\prime }\Vert ^{2}).
\end{equation*}
\end{lemma}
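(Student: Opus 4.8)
The plan is to prove the two bounds separately, expanding about the compact orbit $G'(a')$ and feeding in the uniform control of the coupling gradient from \eqref{estimate N}. For the first bound, recall (Theorem \ref{main regular}) that $\zeta_0(\zeta',\varepsilon)$ solves $P_{E_0'}\nabla_{\zeta_0}\Psi_\varepsilon(\zeta_0,\zeta')=0$; since $\mathcal{A}_0$ splits with $\nabla_{\zeta_0}\mathcal{A}_0=2\pi\nabla V_0(\zeta_0)$ and $\mathcal{C}_\varepsilon$ is the identity on the $E_0$-block, this reads
\begin{equation*}
2\pi\,P_{E_0'}\nabla V_0(\zeta_0)+P_{E_0'}\mathcal{C}_\varepsilon\nabla_{\zeta_0}\mathcal{N}(\zeta_0,\zeta')=0 .
\end{equation*}
Writing $\delta_0=\zeta_0-a_0$ and expanding $\nabla V_0$ at $a_0$, where $\nabla V_0(a_0)=0$ because $a_0$ is a central configuration, gives $L\delta_0=-2\pi R(\delta_0)-P_{E_0'}\mathcal{C}_\varepsilon\nabla_{\zeta_0}\mathcal{N}(a_0+\delta_0,\zeta')$ with $L=2\pi P_{E_0'}\nabla^2V_0[a_0]|_{E_0'}$ and $\|R(\delta_0)\|=O(\|\delta_0\|^2)$. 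By Definition \ref{Def3} the kernel of $P_{E_0}\nabla^2V_0[a_0]$ is $\mathrm{span}(\mathcal{J}_n a_0)$, and since $E_0'$ is a complement to this direction in $E_0$, the operator $L$ is invertible, so $\|L\delta_0\|\ge c\|\delta_0\|$. Bounding the right side with \eqref{estimate N} and the choice $g=(1,g')$, for which $\|\xi-gu_a\|^2=\|\delta_0\|^2+\|\zeta'-g'a'\|^2$, yields
\begin{equation*}
c\,\|\delta_0\|\le C\|\delta_0\|^2+N\big(\varepsilon+\|\delta_0\|^2+\|\zeta'-g'a'\|^2\big).
\end{equation*}
Shrinking $\mathcal{V}'$ so that $\|\delta_0\|$ is small absorbs the quadratic self-terms on the left and gives the first estimate with $N'=2N/c$. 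Uniformity in $g'\in G'$ follows from compactness of the orbit and $G$-equivariance of $\nabla^2V_0$, which keep $c$ (and the $g$-independent constant of \eqref{estimate N}) uniform along $G'(a')$.

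For the second bound I would first remove the implicit dependence through $\zeta_0$ by an envelope argument: since $D_{\zeta'}\zeta_0$ is tangent to the slice $E_0'$ on which $P_{E_0'}\nabla_{\zeta_0}\Psi_\varepsilon$ vanishes, the chain-rule term drops and
\begin{equation*}
\nabla_{\zeta'}\big[\Psi_\varepsilon(\zeta_0(\zeta',\varepsilon),\zeta')\big]=P_{E'}\nabla_{\zeta'}\Psi_\varepsilon\big|_{(\zeta_0(\zeta',\varepsilon),\zeta')},
\end{equation*}
so that, subtracting the leading term $\sum_j(r_j/\varepsilon)^{1-\alpha}V_j$, one gets $\nabla_{\zeta'}\mathcal{N}'=\frac{\varepsilon^{\alpha-1}}{2\pi}P_{E'}\nabla_{\zeta'}\mathcal{N}|_{(\zeta_0,\zeta')}$. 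Now \eqref{estimate N} restricted to the $E'$-block (again with $g=(1,g')$) bounds $\|P_{E'}\mathcal{C}_\varepsilon\nabla_{\zeta'}\mathcal{N}\|$ by $N(\varepsilon+\|\zeta_0-a_0\|^2+\|\zeta'-g'a'\|^2)$; because $\mathcal{C}_\varepsilon$ acts as $r_j^{\alpha-1}\mathcal{I}_{k_j}$ on the $j$-th cluster and $r_j$ is of order $\varepsilon$ by \eqref{expansion}, the weight $r_j^{\alpha-1}$ and the prefactor $\varepsilon^{\alpha-1}$ are comparable, so up to this bounded factor the same estimate controls the quantity $P_{E'}\nabla_{\zeta'}\mathcal{N}(\zeta')$ that enters $\nabla\mathcal{N}'$. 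Substituting the first estimate together with $\|\zeta_0-a_0\|^2\le C(\varepsilon+\|\zeta'-g'a'\|^2)$ collapses the right side to $N''(\varepsilon+\|\zeta'-g'a'\|^2)$.

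The hard part will be securing the quadratic, rather than merely linear, dependence on $\|\zeta'-g'a'\|$ uniformly in $g'$. This rests on the two base-point cancellations — $\nabla V_0(a_0)=0$ and the second-order vanishing of the coupling gradient packaged in \eqref{estimate N} — and on the bookkeeping that the cluster weight $r_j^{\alpha-1}$ in $\mathcal{C}_\varepsilon$ exactly offsets the singular factor $\varepsilon^{\alpha-1}$, so that no negative power of $\varepsilon$ survives. The absorption of the self-quadratic term $\|\delta_0\|^2$ forces a small neighbourhood, and checking that its radius may be chosen uniformly over the compact orbit $G'(a')$ is the final delicate point.
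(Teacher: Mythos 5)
Your proposal is correct, and its first half is essentially the paper's argument: Taylor-expand $\nabla V_0$ at $a_0$, trade $2\pi P_{E_0^{\prime}}\nabla V_{0}(\zeta_0)$ for $-P_{E_0^{\prime}}\nabla\mathcal{N}(\zeta_0,\zeta^{\prime})$ via the slice equation, invoke \eqref{estimate N} with $g=(1,g^{\prime})$, use invertibility of $P_{E_0^{\prime}}\left.\nabla^{2}V_{0}[a_{0}]\right|_{E_0^{\prime}}$ (nondegeneracy of $a_0$, with $E_0^{\prime}$ transverse to $\mbox{span}(\mathcal{J}_n a_0)$), absorb the quadratic self-term by shrinking $\mathcal{V}^{\prime}$, and get uniformity in $g^{\prime}$ from compactness of $G^{\prime}$ and equivariance. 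For the second estimate, however, you take a genuinely different and arguably cleaner route. The paper writes $\mathcal{N}^{\prime}(\zeta^{\prime})=\varepsilon^{\alpha-1}V_{0}(a_{0}+R_{\varepsilon}(\zeta^{\prime}))+\frac{\varepsilon^{\alpha-1}}{2\pi}\mathcal{N}(\zeta_{0}(\zeta^{\prime}),\zeta^{\prime})$ and bounds the gradient of the first term \emph{quantitatively}: by the mean value theorem and $\nabla V_{0}(a_{0})=0$ it is controlled by $e\Vert R_{\varepsilon}(\zeta^{\prime})\Vert$, which the first estimate \eqref{estimate R} then makes of the desired order. Your envelope argument instead shows this chain-rule contribution \emph{cancels exactly}: since $D_{\zeta^{\prime}}\zeta_{0}$ takes values in the slice $E_{0}^{\prime}$ on which $P_{E_0^{\prime}}\nabla_{\zeta_0}\Psi_{\varepsilon}$ vanishes, one gets the identity $\nabla_{\zeta^{\prime}}\mathcal{N}^{\prime}=\frac{\varepsilon^{\alpha-1}}{2\pi}P_{E^{\prime}}\nabla_{\zeta^{\prime}}\mathcal{N}\big|_{(\zeta_{0}(\zeta^{\prime},\varepsilon),\zeta^{\prime})}$, and \eqref{estimate N} plus the scaling comparison $r_j^{\alpha-1}\sim\varepsilon^{\alpha-1}$ from \eqref{expansion} (the ratio $(r_j/\varepsilon)^{\alpha-1}$ is bounded above and below) finishes the bound after substituting your first estimate for $\Vert\zeta_{0}-a_{0}\Vert^{2}$. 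What your version buys is that no bound on $D_{\zeta^{\prime}}\zeta_{0}$ (or on the Jacobian of $R_{\varepsilon}$, which the paper's notation quietly suppresses in the mean-value step) is ever needed, and the residual $V_0$-term disappears identically rather than being estimated; what the paper's version buys is a slightly more elementary computation that never differentiates the implicit function. Both hinge on the same two cancellations you identify — criticality of $a_0$ for $V_0$ and the second-order vanishing packaged in \eqref{estimate N} — so the quadratic dependence on $\Vert\zeta^{\prime}-g^{\prime}a^{\prime}\Vert$ comes out the same way in each.
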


\begin{proof}
We first write the Taylor expansion of the operator $$P_{E_0^{\prime}}\nabla 
\mathcal{A}_{0}(\zeta _{0}(\zeta ^{\prime },\varepsilon ),\zeta ^{\prime
})=2\pi P_{E_0^{\prime}}\nabla V_{0}(\zeta _{0}(\zeta ^{\prime },\varepsilon ))$$
around $\zeta _{0}=a_{0}$. For simplicity, we omit the dependence of $%
\varepsilon $ in the function $\zeta _{0}(\zeta ^{\prime },\varepsilon )$.
We may shrink the neighbourhood $\mathcal{V}^{\prime }$ such that for each $%
\zeta ^{\prime }\in \mathcal{V}^{\prime }$ we get 
\begin{equation*}
\Vert P_{E_0^{\prime}}\nabla V_{0}(\zeta _{0}(\zeta ^{\prime }))+P_{E_0^{\prime}}\nabla^{2}V_{0}[a_{0}](\zeta _{0}(\zeta ^{\prime })-a_{0})\Vert \leq
N_{1}^{\prime }\Vert \zeta _{0}(\zeta ^{\prime })-a_{0}\Vert ^{2}
\end{equation*}%
for some constant $N_{1}^{\prime }$. By the reverse triangle inequality, 
\begin{equation}
\Vert P_{E_0^{\prime}}\nabla^{2}V_{0}[a_{0}](\zeta _{0}(\zeta ^{\prime
})-a_{0})\Vert \leq \Vert P_{E_0^{\prime}}\nabla V_{0}(\zeta _{0}(\zeta
^{\prime }))\Vert +N_{1}^{\prime }\Vert \zeta _{0}(\zeta ^{\prime
})-a_{0}\Vert ^{2}.  \label{lem36 eq1}
\end{equation}%
Since $\zeta _{0}(\zeta ^{\prime })$ solves
uniquely the equation $P_{E_0^{\prime}}\nabla\Psi _{\varepsilon }(\zeta _{0}(\zeta ^{\prime
}),\zeta ^{\prime })=0$, we have 
\begin{equation*}
2\pi P_{E_0^{\prime}}\nabla V_{0}(\zeta _{0}(\zeta ^{\prime }))=-P_{E_0^{\prime}}\nabla\mathcal{N}(\zeta _{0}(\zeta ^{\prime }),\zeta ^{\prime }).
\end{equation*}%
The estimates for $\mathcal{N}$ in \eqref{estimate N} yields 
\begin{equation*}
2\pi \Vert P_{E_0^{\prime}}\nabla V_{0}(\zeta _{0}(\zeta ^{\prime })\Vert
^{2}\leq \Vert \nabla\mathcal{N}(\zeta _{0}(\zeta ^{\prime }),\zeta ^{\prime })\Vert^2\leq  N\left( \varepsilon +\Vert \zeta _{0}(\zeta ^{\prime
})-a_{0}\Vert ^{2}+\Vert \zeta ^{\prime }-a^{\prime }\Vert ^{2}\right)
\end{equation*}%
and \eqref{lem36 eq1} becomes%
\begin{equation*}
\Vert P_{E_0^{\prime}}\nabla^{2}V_{0}[a_{0}](\zeta _{0}(\zeta ^{\prime
})-a_{0})\Vert \leq N_{2}^{\prime }(\varepsilon +\Vert \zeta _{0}(\zeta
^{\prime })-a_{0}\Vert ^{2}+\Vert \zeta ^{\prime }-a^{\prime }\Vert ^{2})
\end{equation*}%
for some $N_{2}^{\prime }$. Since $P_{E_0^{\prime}}\left.\nabla^{2}V_{0}[a_{0}]\right|_{E_0^{\prime}}$ is
invertible, there is $c>0$ such that $\Vert P_{E_0^{\prime}}\left.\nabla^{2}V_{0}[a_{0}]\right|_{E_0^{\prime}}\Vert \geq 2c$. Therefore 
\begin{equation*}
\Vert \zeta _{0}(\zeta ^{\prime })-a_{0}\Vert (2c-N_{2}^{\prime }\Vert \zeta
_{0}(\zeta ^{\prime })-a_{0}\Vert )\leq N_{2}^{\prime }(\varepsilon +\Vert
\zeta ^{\prime }-a^{\prime }\Vert ^{2}).
\end{equation*}%
We can take $\mathcal{V}^{\prime }$ smaller such that $\Vert \zeta
_{0}(\zeta ^{\prime })-a_{0}\Vert \leq c/N_{2}^{\prime }$ and then 
\begin{equation*}
c\Vert \zeta _{0}(\zeta ^{\prime })-a_{0}\Vert \leq N_{2}^{\prime
}(\varepsilon +\Vert \zeta ^{\prime }-a^{\prime }\Vert ^{2})~.
\end{equation*}%
We now set $N^{\prime }=N_{2}^{\prime }/c$. This allows us to write $\zeta
_{0}(\zeta ^{\prime })=a_{0}+R_{\varepsilon }(\zeta ^{\prime })$, where $%
R_{\varepsilon }(\zeta ^{\prime })$ is the remainder which satisfies by the
above 
\begin{equation}
\Vert R_{\varepsilon }(\zeta ^{\prime })\Vert \leq N^{\prime }(\varepsilon
+\Vert \zeta ^{\prime }-a^{\prime }\Vert ^{2}).  \label{estimate R}
\end{equation}

To obtain the second estimate, we use the definition \eqref{defn= N'} and we
replace $\zeta _{0}(\zeta ^{\prime })$ by $a_{0}+R_{\varepsilon }(\zeta
^{\prime })$. By \eqref{A0 reduced}, the first terms of \eqref{defn= N'}
become 
\begin{equation*}
\frac{\varepsilon ^{\alpha -1}}{2\pi }\mathcal{A}_{0}(a_{0}+R_{\varepsilon
}(\zeta ^{\prime }),\zeta ^{\prime })-\sum_{j=1}^{n}\left( r_{j}/\varepsilon
\right) ^{1-\alpha }V_{j}(\zeta _{j})=\varepsilon ^{\alpha
-1}V_{0}(a_{0}+R_{\varepsilon }(\zeta ^{\prime })).
\end{equation*}%
Applying the mean value theorem, there is some $\mu \in \lbrack 0,1]$ such
that%
\begin{equation*}
P_{E^{\prime}}\nabla V_{0}(a_{0}+R_{\varepsilon }(\zeta ^{\prime
}))=P_{E^{\prime}}\nabla ^{2}V_{0}(a_{0}+\mu R_{\varepsilon }(\zeta
^{\prime }))\left( R_{\varepsilon }(\zeta ^{\prime })\right) ,
\end{equation*}%
and there is a constant $e$ such that 
\begin{equation}
\left\Vert P_{E^{\prime}}\nabla V_{0}(a_{0}+R_{\varepsilon }(\zeta
^{\prime }))\right\Vert \leq e\left\Vert R_{\varepsilon }(\zeta ^{\prime
})\right\Vert .  \label{in}
\end{equation}%
We finally get 
\begin{equation*}
\Vert P_{E^{\prime}}\nabla \mathcal{N}(\zeta ^{\prime })\Vert \leq
\varepsilon ^{\alpha -1}e\left\Vert R(\zeta ^{\prime })\right\Vert
+N(\varepsilon +\Vert \zeta ^{\prime }-a\Vert ^{2}+\Vert R_{\varepsilon
}(\zeta ^{\prime })\Vert ^{2})\leq N^{\prime \prime }(\varepsilon +\Vert
\zeta ^{\prime }-a^{\prime }\Vert ^{2})
\end{equation*}%
for some $N^{\prime \prime }$ after using the uniform estimate 
\eqref{estimate
N} and \eqref{estimate R}. The arguments can be repeated replacing $%
a^{\prime }$ by $g^{\prime }a^{\prime }$ for each $g^{\prime }\in G^{\prime
} $ and the neighbourhood $\mathcal{V}^{\prime }$ can be taken as a
neighbourhood of the orbit $G^{\prime }(a^{\prime })$ by compactness of $%
G^{\prime }$.
\end{proof}

\subsection*{Lyusternik-Schnirelmann application}

We now show that the function 
\begin{equation}
\Psi _{\varepsilon }^{\prime }(\zeta ^{\prime })=\sum_{j=1}^{n}\left(
r_{j}/\varepsilon \right) ^{1-\alpha }V_{j}(\zeta _{j})+\mathcal{N}^{\prime
}(\zeta ^{\prime })\text{,}  \label{new reduced potential}
\end{equation}%
has critical points in the neighbourhood $\mathcal{V}^{\prime }\subset
E^{\prime }$ of the orbit $G^{\prime }(a^{\prime })$.

\begin{theorem}
\label{main category}If $a_{0}$ is a nondegenerate central configuration and 
$a_{j}$ is a $2\pi p_j$-nondegenerate central configuration for each $%
j=1,\dots,n$. Then, for each $\varepsilon \in (0,\varepsilon _{0})$, there
is a neighbourhood $\mathcal{V}^{\prime }\subset E^{\prime }$ of the orbit $%
G^{\prime }(a^{\prime })$ such that the number of critical points of the
function $\Psi^{\prime} _{\varepsilon }:\mathcal{V}^{\prime }\to \mathbb{R}$
is bounded below by $\mbox{Cat}(G^{\prime }/K)$ where $K$ is the stabiliser
of $a^{\prime }$.
\end{theorem}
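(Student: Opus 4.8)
The plan is to view $\Psi_{\varepsilon}^{\prime}$ as a symmetry-breaking perturbation of a $G^{\prime}$-invariant function whose critical set is precisely the orbit $G^{\prime}(a^{\prime})$, and then to count critical points by reducing to a smooth function on the compact quotient $G^{\prime}/K$ and invoking Lyusternik--Schnirelmann theory. First I would use the form of $\Psi_{\varepsilon}^{\prime}$ furnished by Theorem \ref{main regular} (where nondegeneracy of $a_{0}$, Definition \ref{Def3}, was used to solve for $\zeta_{0}$) to split $\Psi_{\varepsilon}^{\prime}(\zeta^{\prime})=\Phi_{\varepsilon}(\zeta^{\prime})+\mathcal{N}^{\prime}(\zeta^{\prime})$ with leading part $\Phi_{\varepsilon}(\zeta^{\prime})=\sum_{j=1}^{n}(r_{j}/\varepsilon)^{1-\alpha}V_{j}(\zeta_{j})$. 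This term is manifestly $G^{\prime}=U(1)^{n}$-invariant, and by \eqref{expansion} each coefficient satisfies $(r_{j}/\varepsilon)^{1-\alpha}\to p_{j}^{2(\alpha-1)/(\alpha+1)}>0$, so the coefficients remain in a fixed compact set of positive reals for small $\varepsilon$. Each $a_{j}$ is a critical point of $V_{j}$, and the $2\pi p_{j}$-nondegeneracy (Definition \ref{Def1}) forces the block $\hat{T}_{0,u_{j}}=P_{E_{j}}\nabla_{u_{j}}^{2}V_{j}[a_{j}]|_{E_{j}}$ to be invertible on the orthogonal complement of $\mbox{span}(\mathcal{J}_{k_{j}}a_{j})$ in $E_{j}$. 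Consequently the orbit $M:=G^{\prime}(a^{\prime})=U(1)^{n}(a_{1},\dots,a_{n})$ is a critical manifold of $\Phi_{\varepsilon}$ that is nondegenerate in its normal directions, uniformly in $\varepsilon$, and $M\cong G^{\prime}/K$ with $K$ the stabiliser of $a^{\prime}$.

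Next I would carry out a Lyapunov--Schmidt reduction in the directions normal to $M$. Working inside the finite-dimensional tubular neighbourhood $\mathcal{V}^{\prime}$, I write $\zeta^{\prime}=m\oplus\zeta_{\perp}$ with $m\in M$ and $\zeta_{\perp}$ in the normal fibre, so that the normal part of $P_{E^{\prime}}\nabla\Psi_{\varepsilon}^{\prime}=0$ becomes $P_{\perp}(\nabla\Phi_{\varepsilon}+\nabla\mathcal{N}^{\prime})=0$. The normal Hessian of $\Phi_{\varepsilon}$ is uniformly invertible along $M$ by the previous paragraph, and by Lemma \ref{lemma: estimate N'} the perturbation obeys $\Vert P_{E^{\prime}}\nabla\mathcal{N}^{\prime}(\zeta^{\prime})\Vert\leq N^{\prime\prime}(\varepsilon+\Vert\zeta^{\prime}-g^{\prime}a^{\prime}\Vert^{2})$, hence it is uniformly $C^{1}$-small near the orbit. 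The implicit function theorem then produces, for every sufficiently small $\varepsilon$, a unique small solution $\zeta_{\perp}=\zeta_{\perp}(m,\varepsilon)$ depending smoothly on $m$; compactness of $M$ ensures that it is defined over all of $M$. Substituting back gives a smooth reduced function $f_{\varepsilon}:M\to\mathbb{R}$, $f_{\varepsilon}(m)=\Psi_{\varepsilon}^{\prime}(m\oplus\zeta_{\perp}(m,\varepsilon))$, whose critical points are in one-to-one correspondence with the critical points of $\Psi_{\varepsilon}^{\prime}$ lying in $\mathcal{V}^{\prime}$.

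Finally, since $f_{\varepsilon}$ is a $C^{1}$ function on the compact manifold $M=G^{\prime}/K$, the classical Lyusternik--Schnirelmann theorem guarantees that it has at least $\mbox{Cat}(M)=\mbox{Cat}(G^{\prime}/K)$ critical points, and therefore $\Psi_{\varepsilon}^{\prime}$ has at least that many critical points in $\mathcal{V}^{\prime}$, as claimed. When the stabiliser $K$ is finite modulo the $U(1)$-factors acting on the single-body clusters, $M\cong U(1)^{n_{0}}$ and the bound specialises to $\mbox{Cat}(U(1)^{n_{0}})=n_{0}+1$.

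The step I expect to be the main obstacle is the normal reduction with estimates uniform in $\varepsilon$: one must verify that the implicit function theorem applies on a single neighbourhood independent of $\varepsilon$, which needs both the uniform invertibility of the normal Hessian of $\Phi_{\varepsilon}$ (clear from the convergence of its coefficients to positive constants) and uniform control of the first and second derivatives of $\mathcal{N}^{\prime}$ near $M$. The quadratic factor $\Vert\zeta^{\prime}-g^{\prime}a^{\prime}\Vert^{2}$ in Lemma \ref{lemma: estimate N'} is decisive here: it shows that on the orbit $M$ itself the gradient of $\mathcal{N}^{\prime}$ is only of order $\varepsilon$, so the normal solution $\zeta_{\perp}$ is $\mathcal{O}(\varepsilon)$-small and the symmetry-breaking perturbation does not destroy the transverse nondegeneracy inherited from $\Phi_{\varepsilon}$.
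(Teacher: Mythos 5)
Your proposal is correct and takes essentially the same route as the paper: the same splitting of $\Psi_{\varepsilon}^{\prime}$ into the $G^{\prime}$-invariant leading part $\sum_{j}(r_{j}/\varepsilon)^{1-\alpha}V_{j}$ plus $\mathcal{N}^{\prime}$, the same use of the limit $(r_{j}/\varepsilon)^{1-\alpha}\to p_{j}^{2(\alpha-1)/(\alpha+1)}$ and of Lemma \ref{lemma: estimate N'} to kill $\nabla\mathcal{N}^{\prime}$ and its Hessian on the orbit, the same identification of the normal Hessian with the blocks $\hat{T}_{0,u_{j}}$ made invertible by $2\pi p_{j}$-nondegeneracy, and the same reduction to a function on $G^{\prime}(a^{\prime})\cong G^{\prime}/K$ followed by Lyusternik--Schnirelmann. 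Your implicit-function-theorem reduction in the normal fibres, with uniformity in $\varepsilon$ obtained from the quadratic estimate and compactness of the orbit, is precisely what the paper compresses into its ``standard application of the Palais-slice coordinates.''
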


\begin{proof}
Notice that to critical points of the function $\Psi _{\varepsilon }^{\prime
}(\zeta ^{\prime })$ restricted to the subspace $E^{\prime }\subset E^{N}$
are the solutions of $P_{E^{\prime }}\nabla \Psi _{\varepsilon }^{\prime
}(\zeta ^{\prime })=0$. Using the expansion (\ref{expansion}), we obtain $%
\lim_{\varepsilon \rightarrow 0}\left( r_{j}/\varepsilon \right) ^{1-\alpha
}=p_{j}^{2(\alpha -1)/(\alpha +1)}$, and 
\begin{equation*}
P_{E_{j}}\nabla \Psi _{0}^{\prime }(a^{\prime })=\left( p_{j}^{2(\alpha
-1)/(\alpha +1)}P_{E_{j}}\nabla V_{j}(a_{j})+\lim_{\varepsilon \rightarrow
0}P_{E_{j}}\nabla \mathcal{N}^{\prime }(a^{\prime })\right) .
\end{equation*}%
By the second estimate in Lemma \ref{lemma: estimate N'}, $\Vert
\lim_{\varepsilon \rightarrow 0}P_{E^{\prime }}\nabla _{\zeta ^{\prime }}%
\mathcal{N}^{\prime }(\zeta ^{\prime })\Vert \leq N^{\prime }\Vert \zeta
^{\prime }-a^{\prime }\Vert ^{2}$, we have 
\begin{equation*}
\lim_{\varepsilon \rightarrow 0}P_{E^{\prime }}\nabla \mathcal{N}^{\prime
}(a^{\prime })=0,\qquad \lim_{\varepsilon \rightarrow 0}P_{E^{\prime
}}\nabla ^{2}\mathcal{N}^{\prime }[a^{\prime }]|_{E^{\prime }}=0.
\end{equation*}
Since $a_{j}$ is a critical point of $V_{j}$ for each $j=1,\dots ,n$, we get 
$P_{E_{j}}\nabla V_{j}(a_{j})$ and $P_{E^{\prime }}\nabla \Psi _{0}^{\prime
}(a^{\prime })=0$. Furthermore, the same estimate implies that the Hessian
of $\Psi _{0}^{\prime }$ respect to $E^{\prime }$ is 
\begin{equation*}
P_{E^{\prime }}\nabla ^{2}\Psi _{0}^{\prime }[a^{\prime
}]|_{E^{\prime }}=p_{1}^{2(\alpha -1)/(\alpha +1)}\hat{T}_{0,u_{1}}\oplus \dots
\oplus p_{n}^{2(\alpha -1)/(\alpha +1)}\hat{T}_{0,u_{n}},
\end{equation*}%
where 
\begin{equation*}
\hat{T}_{0,u_{j}}=P_{E_{j}}\nabla ^{2}V_{j}[a_{j}]|_{E_{j}}\in \mbox{End}%
(E_{j}).
\end{equation*}%
By the $2\pi p_{j}$-nondegeneracy assumption on $a_{j}$, each block $\hat{T}%
_{0,u_{j}}$ is non-singular on a subspace $W_{j}$ complementary to $%
T_{a_{j}}U(1)(a_{j})$ in $E_{j}$. Consequently, $W=\bigoplus_{j=1}^{n}W_{j}$
is a complement to $T_{a^{\prime }}G^{\prime }(a^{\prime })$ in $E^{\prime }$%
. The argument above is valid if we replace $a^{\prime }$ by $g^{\prime
}a^{\prime }$ for any $g^{\prime }\in G^{\prime }$. A standard application
of the Palais-Slice coordinates as in \cite{Braids} allows us to express the
normal coordinates in $W$ in term of the coordinates along the group orbit $%
G^{\prime }(a^{\prime })$ after taking $\varepsilon _{0}$ and $\mathcal{V}%
^{\prime }$ possibly smaller. For $\varepsilon \in (0,\varepsilon _{0})$,
the solutions of $P_{E^{\prime }}\nabla \Psi _{\varepsilon }^{\prime }(\zeta
^{\prime })=0$ in $\mathcal{V}^{\prime }$ are in one to one correspondence
with the critical points the function $\Psi _{\varepsilon }^{\prime
}:G^{\prime }(a^{\prime })\rightarrow \mathbb{R}$. By the
Lyusternik-Schnirelmann theorem for compact manifolds \cite{Fadell, LS}, the
number of critical points of this function is bounded below by $\mbox{Cat}%
(G^{\prime }/K)$ where $K$ is the stabiliser of $a^{\prime }$.
\end{proof}

\subsection*{Existence of carousel solutions}

We state our main result regarding the existence of carousel solutions of
the $N$-body problem. In summary, we proved that the existence of periodic
solutions near the orbit $G(u_{a})$ with $u_{a}=(a_{0},a_{1},%
\dots ,a_{n})$ reduces to determining whether the function 
\eqref{new
reduced potential} admits critical points.  Recall that we
assume that $k_{j}>1$ for $j=1,\dots ,n_{0}$ and $k_{j}=1$ for $%
j=n_{0}+1,\dots ,n$. Thus $K=U(1)^{n-n_{0}}$, because $G^{\prime
}=U(1)^{n}$ acts trivially on each subspace $E_{j}=\{0\}$ for $%
j=n_{0}+1,\dots ,n$. By Theorem \ref{main category},
there are at least $\mbox{Cat}(G^{\prime }/K)=n_{0}+1$ solutions. Therefore, we have,

\begin{theorem}[Carousels for non-gravitational potentials]
\label{main result}Set $\alpha \neq 2$ and $k_{j}>1$ for $j=1,\dots ,n_{0}$
and $k_{j}=1$ for $j=n_{0}+1,\dots ,n$. Fix integers $p_{1},\dots ,p_{n_0}\in 
\mathbb{Z}\backslash \{0\}$ and choose the frequencies $\omega _{j},\nu $
and the amplitudes $r_{j}$ according to the conditions {\normalfont\textbf{%
(A)-(B)}}. Suppose that $a_{0}$ is a nondegenerate central configuration of
the $n$-body problem and $a_{j}$ is a $2\pi p_{j}$-nondegenerate central
configuration of the $k_{j}$-body problem for $j=1,\dots ,n_{0}$. Then for
every sufficiently small $\varepsilon $, there are at least $\mbox{Cat}(G^{\prime }/K)=n_{0}+1$ solutions  of the $N$-body problem \eqref{NBP}
with components of the form%
\begin{eqnarray*}
q_{j,k}(t) &=&\exp (tJ)u_{0,j}(\nu t)+r_{j}\exp (t\omega _{j}J)u_{j,k}(\nu
t),\quad j=1,\dots ,n_{0}, \quad k=1, \dots, k_j\\
q_{j,1}(t) &=&\exp (tJ)u_{0,j}(\nu t)\quad \mbox{for}\;j=n_{0}+1,\dots ,n.
\end{eqnarray*}%
where $u_{0,j}(\nu t)=a_{0,j}+\mathcal{O}_{X}(\varepsilon )$ and $%
u_{j,k}(\nu t)=e^{\vartheta _{j}J}a_{j,k}+\mathcal{O}_{X}(\varepsilon )$ for 
$j=1,\dots ,n_{0}$ with some phases $\vartheta _{j}\in S^{1}$.
\end{theorem}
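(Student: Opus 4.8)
The plan is to assemble the reduction results established in this section and close with the Lyusternik--Schnirelmann count. First I would check that every hypothesis feeding the earlier statements is present. Since $\alpha\neq 2$ and each $a_j$ is $2\pi p_j$-nondegenerate for $j=1,\dots,n_0$, Lemma~\ref{lem: bounded inverse} gives the uniform invertibility of $\partial_\eta F_0[(gu_a,0)]$ on $W$, which is exactly the input required by the Lyapunov--Schmidt reduction theorem. Applying that theorem yields the analytic $H$-equivariant map $\varphi_\varepsilon:\mathcal{V}\to W$ with $\|\varphi_\varepsilon(\xi)\|\leq N_1(\varepsilon+\|\xi-gu_a\|^2)$ and reduces the equation $P_W\nabla\mathcal{A}=0$ to finding critical points of the finite-dimensional $\Psi_\varepsilon$ on $\mathcal{V}\subset X_0$.

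Next I would run the symmetry reduction. Using nondegeneracy of $a_0$ (Definition~\ref{Def3}), Theorem~\ref{main regular} solves the regular component $\zeta_0=\zeta_0(\zeta',\varepsilon)$ from $P_{E_0'}\nabla\Psi_\varepsilon=0$ and reduces the count to critical points of $\Psi'_\varepsilon:\mathcal{V}'\subset E'\to\mathbb{R}$ on a neighbourhood of the orbit $G'(a')$. Lemma~\ref{lemma: estimate N'} supplies the estimates $\|\zeta_0(\zeta',\varepsilon)-a_0\|=\mathcal{O}(\varepsilon)$ together with the vanishing of $\nabla\mathcal{N}'$ and its Hessian at $a'$ as $\varepsilon\to 0$; these guarantee that the limiting functional $\Psi'_0$ is governed by the amended potentials $V_j$ alone, so the $2\pi p_j$-nondegeneracy of each $a_j$ makes $G'(a')$ a nondegenerate critical manifold of $\Psi'_0$.

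I would then invoke Theorem~\ref{main category}, which bounds the number of critical points of $\Psi'_\varepsilon$ below by $\mathrm{Cat}(G'/K)$, with $K$ the stabiliser of $a'$ in $G'=U(1)^n$. The decisive computation is to identify $K$: because $a_j=0\in E_j=\{0\}$ for $j=n_0+1,\dots,n$, the corresponding $U(1)$-factors fix $a'$, while the factors for $j=1,\dots,n_0$ act freely on the nontrivial central configurations $a_j$. Hence $K=U(1)^{\,n-n_0}$, so $G'/K\cong U(1)^{n_0}$ and $\mathrm{Cat}(U(1)^{n_0})=n_0+1$ by the cup-length bound for the torus. This produces at least $n_0+1$ critical points $\zeta'$ of $\Psi'_\varepsilon$.

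Finally I would unwind the chain of reductions. Each such $\zeta'$ corresponds bijectively to a critical point $u=\xi+\varphi_\varepsilon(\xi)$ of $P_X\nabla\mathcal{A}$, hence to a solution of the $N$-body problem after rewriting in the physical coordinates via \eqref{solutions} with $r_j,\omega_j,\nu$ fixed by conditions \textbf{(A)--(B)}. Combining $\|\varphi_\varepsilon\|=\mathcal{O}(\varepsilon)$, $\|\zeta_0-a_0\|=\mathcal{O}(\varepsilon)$, and the fact that the critical points lie within $\mathcal{O}(\varepsilon)$ of $G'(a')$ yields $u_{0,j}(\nu t)=a_{0,j}+\mathcal{O}_X(\varepsilon)$ and $u_{j,k}(\nu t)=e^{\vartheta_j J}a_{j,k}+\mathcal{O}_X(\varepsilon)$, where the phase $\vartheta_j\in S^1$ encodes the relative orientation selected at the critical point. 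I expect the main difficulty to be not any single step but the bookkeeping across the three composed reductions: one must verify that the uniform-in-$\varepsilon$ estimates survive each passage, so that $\Psi'_0$ is the genuine Morse-theoretic model whose topology---the category of the torus---controls the final count.
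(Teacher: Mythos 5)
Your proposal is correct and takes essentially the same approach as the paper: the paper's proof of Theorem \ref{main result} is exactly the assembly you describe, namely Lemma \ref{lem: bounded inverse} feeding the Lyapunov--Schmidt reduction, then Theorem \ref{main regular} and Lemma \ref{lemma: estimate N'} to eliminate the regular component $\zeta_0$, then Theorem \ref{main category} with the identification $K=U(1)^{n-n_0}$ (since $G'$ acts trivially on $E_j=\{0\}$ for $j=n_0+1,\dots,n$) and $\mbox{Cat}(U(1)^{n_0})=n_0+1$. Your explicit freeness argument for the first $n_0$ circle factors is a slightly more detailed justification of the stabiliser computation than the paper's one-line remark, but the content is identical.
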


These solutions are quasi-periodic if $\nu \notin \mathbb{Q}$, and periodic
if $\nu \in \mathbb{Q}$. In the case that $\nu =(p-q)/q$ is rational, then $%
\varepsilon ^{-\left( \alpha +1\right) /2}=p/q$ and $\omega
_{j}=(q+p_{j}p)/q $ are rational. Thus, for any fixed integer $q>0$, there
is some $p_{0}>0$ such that, for each $p>p_{0}$ , the components $q_j(t)$ are $2\pi q$-periodic. In these solutions the centers of mass
of $n$ clusters (close to the central configuration $a_{0}$) wind around the
origin $q$ times, while each $j$-cluster winds around its center
of masses $q+p_{j}p$ times. The sign of the frequency $\omega _{j}$ is determined by $p_{j}$ and represents
whether the $j$-cluster has prograde or retrograde rotation with respect to the whole system. In the case of rational $\nu =\left( p-q\right) /q$, the \emph{%
prograde} ($p_{j}>0$) refers to the case that the cluster rotates in the
same direction as the main relative equilibrium, while \emph{retrograde} ($%
p_{j}<0$) refers to the case that the cluster rotates in the opposite
direction.

The strategy still applies for the gravitational potential under extra
assumptions. In this case, we have from condition {\normalfont\textbf{(C1)}}
that 
\begin{equation*}
r_{1}=\varepsilon ,\qquad \omega _{1}=\varepsilon ^{-\left( \alpha +1\right)
/2},\qquad \nu =\varepsilon ^{-\left( \alpha +1\right) /2}-1.
\end{equation*}

\begin{theorem}[Carousels for gravitational potentials]
\label{main result2}Set $\alpha =2$, assume conditions {\normalfont\textbf{%
(C0)-(C2)}} and choose the frequencies $\omega _{1},\nu $ and the amplitude $%
r_{1}$ according to the conditions {\normalfont\textbf{(C1)}}. Suppose that $%
a_{0}$ is a nondegenerate central configuration of the $n$-body problem, and 
$a_{1}$ is a $2\pi /m$-nondegenerate central configuration. Then for every
sufficiently small $\varepsilon $, there are at least $\mbox{Cat}(G^{\prime
}/K)=2$ solutions of the $N$-body problem with components of the form%
\begin{eqnarray*}
q_{1,k}(t) &=&\exp (tJ)u_{0,1}(\nu t)+r_{1}\exp (t\omega _{1}J)u_{1,k}(\nu
t), \\
q_{j,1}(t) &=&\exp (tJ)u_{0,j}(\nu t)\quad \mbox{for}\;j=2,\dots ,n.
\end{eqnarray*}%
where $u_{0,j}(\nu t)=a_{0,j}+\mathcal{O}_{X}(\varepsilon )$ and $%
u_{1,k}(\nu t)=e^{\vartheta _{1}J}a_{1,k}+\mathcal{O}_{X}(\varepsilon )$ for
some phase $\vartheta _{1}\in S^{1}$. Furthermore, each $u_{1,k}(s)$ is $%
2\pi /m$-periodic with $m\geq 2$ and 
\begin{equation}
u_{0,j}(s)=\exp (-\theta J)u_{0,\sigma (j)}(s+\theta )
\end{equation}%
where $(\theta ,\sigma )$ is the generator of the discrete symmetry group $%
\Gamma $ introduced in {\normalfont\textbf{(C0)}}.
\end{theorem}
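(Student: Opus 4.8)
The plan is to rerun the entire reduction machinery that produced Theorem \ref{main result}, but carried out inside the $\Gamma$-fixed point space $X^{\Gamma}$ from start to finish. The gravitational degeneracy manifests in the blocks $\hat{T}_{\ell,u_1}$ failing to be invertible exactly at the resonant modes $\ell=\pm p_1=\pm 1$; conditions \textbf{(C0)-(C2)} are engineered so that these modes are simply absent from the admissible space, and the whole point is to show that this restriction does not disturb any of the nondegeneracy structure driving the two Lyapunov-Schmidt reductions.

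First I would invoke Lemma \ref{lem: alpha=2 invariance}: since $\mathcal{A}$ is $\Gamma$-invariant, the Palais Principle of Symmetric Criticality guarantees that any critical point of $\mathcal{A}|_{X^{\Gamma}}$ is a genuine critical point of $\mathcal{A}$, hence a solution of the $N$-body problem. So it suffices to produce critical points in $X^{\Gamma}$. Next, Lemma 3.3 supplies the crucial invertibility: because $a_1$ is $2\pi/m$-nondegenerate and every $u_1\in X^{\Gamma}$ is $2\pi/m$-periodic, the operator $\partial_\eta F_0[(gu_a,0)]$ is invertible on $W^{\Gamma}$. The fixed-point version of the Lyapunov-Schmidt reduction theorem then applies with $X_0$ and $W$ replaced by their $\Gamma$-fixed point spaces, producing the reduced functional $\Psi_\varepsilon$ on $X_0^{\Gamma}$ together with the uniform estimate \eqref{estimate N}.

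The second reduction, Theorem \ref{main regular}, requires the nondegeneracy of $a_0$. Condition \textbf{(C2)} places $u_a$ in $X^{\Gamma}$ and makes $a_0$ symmetric under $2\pi/m$-rotations. Since the Hessian $P_{E_0}\nabla^2 V_0[a_0]|_{E_0}$ is $\Gamma$-invariant, it preserves the splitting $E_0=E_0^{\Gamma}\oplus(E_0^{\Gamma})^{\perp}$, and because the $U(1)$-action commutes with $\Gamma$ the kernel direction $\mathcal{J}_n a_0$ itself lies in $E_0^{\Gamma}$; hence nondegeneracy of $a_0$ in $E_0$ descends to nondegeneracy on $E_0^{\Gamma}$, and the implicit-function-theorem argument solving for $\zeta_0(\zeta',\varepsilon)$ goes through on the fixed-point space, with Lemma \ref{lemma: estimate N'} carrying over verbatim. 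Finally I would apply Theorem \ref{main category} with $2\pi/m$-nondegeneracy of $a_1$ playing the role of $2\pi p_j$-nondegeneracy: under \textbf{(C0)} only the first cluster is divided ($n_0=1$), so $K=U(1)^{n-1}$, $G'/K\cong U(1)$, and the Lyusternik-Schnirelmann bound yields $\mbox{Cat}(U(1))=2$ critical points of $\Psi_\varepsilon'$ in $\mathcal{V}'$. Tracing these back through the reductions produces two solutions of the stated form, and the symmetry relations $u_{1,k}$ being $2\pi/m$-periodic and $u_{0,j}(s)=\exp(-\theta J)u_{0,\sigma(j)}(s+\theta)$ are automatic from membership in $X^{\Gamma}$.

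The hard part will be the compatibility verification flagged above, namely that passing to $X^{\Gamma}$ does not destroy the two nondegeneracy structures. Concretely one must check (i) that discarding the Fourier modes $\ell\notin m\mathbb{Z}$ removes precisely the resonant directions $\ell=\pm 1$ while leaving the transverse Hessian $\hat{T}_{0,u_1}$ nonsingular on the complement of $\mbox{span}(\mathcal{J}_{k_1}a_1)$ in $E_1$ — this is the content of $2\pi/m$-nondegeneracy and is exactly why one needs $m\geq 2$ — and (ii) that the orbit $G(u_a)$ and the amended-potential Hessian at $a_0$ are consistent with the $\Gamma$-symmetry, which is where \textbf{(C1)} (equal masses along $\sigma$-orbits) and \textbf{(C2)} (rotational symmetry of $a_0$) are used. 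Once these checks are in place, every estimate and every implicit-function-theorem application from the $\alpha\neq 2$ argument transfers without change.
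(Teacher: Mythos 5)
Your proposal is correct and takes essentially the same route as the paper: Theorem \ref{main result2} is obtained there by running the whole reduction chain of Section 3 inside the $\Gamma$-fixed point space --- Lemma \ref{lem: alpha=2 invariance} plus Palais' symmetric criticality, the invertibility lemma on $W^{\Gamma}$, the Lyapunov--Schmidt theorem with $X_0$ and $W$ replaced by their $\Gamma$-fixed subspaces, then Theorem \ref{main regular} and Theorem \ref{main category} with $n_0=1$, giving $\mbox{Cat}(G^{\prime}/K)=\mbox{Cat}(U(1))=2$, with the symmetry relations automatic from membership in $X^{\Gamma}$. Your explicit compatibility checks (that $\mathcal{J}_{n}a_{0}\in E_{0}^{\Gamma}$ and that the nondegeneracy of $a_0$ descends to the fixed-point space) are precisely the points the paper leaves implicit, and they hold as you argue.
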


\begin{example}
\label{exam}
The particular case $k_{1}=2$ and $k_{2},\dots ,k_{n}=1$ is studied in \cite%
{Braids} in the general setting $E=\mathbb{R}^{2d}$.
This follows from the fact that the Euler-Lagrange equations 
\eqref{EL: v
tilde} for $j=1$ are equivalent to a $2$-body problem in a rotating frame of
frequency $\omega $. That is, the system for $u_{1}\in E_{1}$ can be parametrized by $w_{1}\in E$ as 
\begin{equation*}
E_{1}=\left\{ u_{1}=(u_{1,1},u_{1,2})\in E^{2}:u_{1,1}=\frac{m_{1,2}}{%
m_{1,1}+m_{1,2}}w_{1},u_{1,2}=-\frac{m_{1,1}}{m_{1,1}+m_{1,2}}w_{1}\right\} .
\end{equation*}%
In these coordinates $\left( u_{0},w_{1}\right) \in E_{0}\times E$ the
action $\mathcal{A}_{0}(u_{0},u_{1})$ becomes 
\begin{equation*}
\mathcal{A}_{0}(u_{0},w_{1})=\int_{0}^{2\pi }\left( \mathcal{L}_{0}(u_{0},\dot{u}%
_{0})+r_{1}^{1-\alpha }\mathcal{L}_{1}(w_{1},\dot{w}_{1})\right) ,
\end{equation*}%
where 
\begin{equation*}
\mathcal{L}_{1}(w_{1},\dot{w}_{1})=\frac{m_{1,1}m_{1,2}}{m_{1,1}+m_{1,2}}\left\Vert
(\partial _{t}+\omega J)w_{1}\right\Vert ^{2}+m_{1,1}m_{1,2}\phi _{\alpha
}(\left\Vert w_{1}\right\Vert ),
\end{equation*}%
which corresponds to the Lagrangian of the Kepler problem in a
rotating frame of frequency $\omega $. We are then left with a
Kepler problem for $\mathcal{L}_{1}(w_{1},\dot{w}_{1})$ and an $n$-body problem for $%
\mathcal{L}_{0}(u_{0},\dot{u}_{0})$. We can normalize the total mass of the cluster by
setting $m_{1,1}+m_{1,2}=1$. The solutions \eqref{solutions} are of the
form%
\begin{align*}
q_{1,1}(t)& =\exp (tJ)u_{0,1}(\nu t)+\varepsilon m_{1,2}\exp (\omega
_{1}tJ)w_{1}(\nu t), \\
q_{1,2}(t)& =\exp (tJ)u_{0,1}(\nu t)-\varepsilon m_{1,1}\exp (\omega
_{1}tJ)w_{1}(\nu t), \\
q_{j,1}(t)& =\exp (tJ)u_{0,j}(\nu t),\quad j=2,\dots ,n,
\end{align*}%
which are exactly the solutions obtained in \cite{Braids} for the
particular case $E=\mathbb{R}^{2}$.
\end{example}

\section{The \texorpdfstring{$2\pi p$} --nondegeneracy property of the \texorpdfstring{$k$}--polygon}

In this section we verify the $2\pi p$-nondegeneracy property of a polygonal
central configuration (with frequency one) for $k$ bodies with masses equal
to one. We shall denote by $E_{red}^{k}$ the subspace of $E^{k}$ of central
configurations with center of mass fixed at the origin. With our previous
notations, we have $E_{j}=E_{red}^{k_{j}}$. By \eqref{T block}, a central
configuration $a\in E_{red}^{k}$ is $2\pi p$-nondegeneracy if the block $%
\hat{T}_{\ell ,u}\in \mbox{End}((E_{red}^{k})^{\mathbb{C}})$ given by $$\hat{T%
}_{\ell ,u}=(\ell ^{2}+1)^{-1}P_{E_{red}^{k}}M_{a}(\ell
/p)|_{E_{red}^{k}}\in \mbox{End}((E_{red}^{k})^{\mathbb{C}})$$ is a
non-singular matrix. Here 
\begin{equation}
M_{a}(\lambda )=\lambda ^{2}\mathcal{I}-2i\lambda \mathcal{J}+\nabla
_{u}^{2}V[a]\in \mbox{End}((E^{k})^{\mathbb{C}})  \label{Ma}
\end{equation}%
and $V$ is the amended potential%
\begin{equation}
V(u)=\frac{1}{2}\sum_{j=1}^{k}\left\Vert u_{j}\right\Vert ^{2}+\sum_{1\leq
i<j\leq k}\phi _{\alpha }\left( \left\Vert u_{j}-u_{i}\right\Vert \right) 
\text{.}  \label{section4 amended potential}
\end{equation}%
Specifically, the $2\pi p$-nondegeneracy property of $a$ is equivalent to
the conditions:

\begin{enumerate}
\item[(a)] $\hat{T}_{\ell ,u}$ is invertible for all $\ell \neq 0$,

\item[(b)] $\hat{T}_{0,u}$ has a one dimensional kernel generated by $%
\mathcal{J}_{k}a$.
\end{enumerate}

\subsection{Spectrum of the polygonal configuration}

First we find the ratio of the polygonal relative equilibrium with frequency one.

\begin{proposition}
The polygonal configuration
\begin{equation}
a=(s_{1})^{\frac{1}{\alpha+1}}(\exp(J\zeta)e_{1},\dots,\exp(kJ\zeta
)e_{1}),\quad e_{1}=%
\begin{bmatrix}
1\\
0
\end{bmatrix}
\label{polygonal configuration}%
\end{equation}
is a central configuration of frequency one, where $\zeta=2\pi/k$ and
\[
s_{1}=\frac{1}{2^{\alpha}}\sum_{j=1}^{k-1}\frac{1}{\sin^{\alpha-1}(j\zeta/2)}.
\]

\end{proposition}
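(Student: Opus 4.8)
The plan is to verify directly that the configuration $a$ is a critical point of the amended potential $V$ in \eqref{section4 amended potential}, which by the definition of central configuration is exactly what needs to be shown. Since all masses equal one, $\nabla_{u_j}V(u)=u_j-\sum_{i\neq j}(u_j-u_i)/\|u_j-u_i\|^{\alpha+1}$, so the condition $\nabla_{u_j}V(a)=0$ is the system
\[
a_j=\sum_{i\neq j}\frac{a_j-a_i}{\|a_j-a_i\|^{\alpha+1}},\qquad j=1,\dots,k.
\]
Writing $\omega=\exp(J\zeta)$ and $a_j=R\,\omega^j e_1$ with $R=s_1^{1/(\alpha+1)}$, I would first exploit the rotational symmetry of the regular polygon: since $\omega$ is an isometry under which the whole vertex set is invariant, the equation at vertex $j$ is the image under $\omega^j$ of the equation at $j=0$, so it suffices to establish the identity at a single vertex.

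Next I would compute the edge lengths. Because each $\omega^i$ is an isometry, $\|a_j-a_i\|=R\,\|(\omega^{i-j}-I)e_1\|$, and using $\|(\omega^m-I)e_1\|=2|\sin(m\zeta/2)|$ this gives $\|a_j-a_i\|=2R\,|\sin((i-j)\zeta/2)|$. Substituting $i=j+m$ and factoring out $\omega^j$, the right-hand side at vertex $j$ becomes
\[
\frac{\omega^j}{2^{\alpha+1}R^{\alpha}}\sum_{m=1}^{k-1}\frac{(I-\omega^m)e_1}{|\sin(m\zeta/2)|^{\alpha+1}},
\]
so, equating with $a_j=R\,\omega^j e_1$ and cancelling $\omega^j$, the central configuration equation reduces to the single vector identity
\[
\sum_{m=1}^{k-1}\frac{(I-\omega^m)e_1}{|\sin(m\zeta/2)|^{\alpha+1}}=2^{\alpha+1}R^{\alpha+1}\,e_1.
\]

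The core computation, which I expect to be the only real work, is evaluating this sum; everything else is formal. The vector $(I-\omega^m)e_1$ has components $(1-\cos m\zeta,\,-\sin m\zeta)$. For the tangential component I would use the reflection symmetry $m\mapsto k-m$: since $k\zeta/2=\pi$ one has $|\sin((k-m)\zeta/2)|=|\sin(m\zeta/2)|$ while $\sin((k-m)\zeta)=-\sin(m\zeta)$, so these components cancel in pairs and the sum is purely radial. For the radial component I would use $1-\cos m\zeta=2\sin^2(m\zeta/2)$, which lowers the exponent and gives
\[
\sum_{m=1}^{k-1}\frac{2\sin^2(m\zeta/2)}{|\sin(m\zeta/2)|^{\alpha+1}}\,e_1
=2\sum_{m=1}^{k-1}\frac{1}{|\sin(m\zeta/2)|^{\alpha-1}}\,e_1.
\]
Comparing with the required right-hand side yields $2^{\alpha+1}R^{\alpha+1}=2\sum_{m=1}^{k-1}\sin^{1-\alpha}(m\zeta/2)$, that is, $R^{\alpha+1}=2^{-\alpha}\sum_{m=1}^{k-1}\sin^{1-\alpha}(m\zeta/2)=s_1$, hence $R=s_1^{1/(\alpha+1)}$, precisely the stated scaling. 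The only mild bookkeeping point is the absolute values, but since $m\zeta/2\in(0,\pi)$ for $1\le m\le k-1$ the sines are positive and the absolute values may be dropped throughout, so no subtlety arises there.
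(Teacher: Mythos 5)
Your verification is correct: the equivariance argument reducing the system to a single vertex, the edge-length formula $\|a_j-a_i\|=2R\,\lvert\sin((i-j)\zeta/2)\rvert$, the cancellation of the tangential component under $m\mapsto k-m$ (with the self-paired term $m=k/2$ vanishing identically when $k$ is even), and the identity $1-\cos m\zeta=2\sin^{2}(m\zeta/2)$ pinning down $R^{\alpha+1}=s_{1}$ all check out, and since $m\zeta/2\in(0,\pi)$ the absolute values indeed drop. Note also that a critical point of $V$ on the ambient space $E^{k}$ restricts to one on $E_{red}^{k}$, and the polygon has zero center of mass since $\sum_{m=1}^{k}\omega^{m}e_{1}=0$, so your stronger ambient computation suffices.

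Your route is genuinely different from the paper's. The paper computes nothing directly: it introduces the two-parameter potential $\widetilde{V}(u;\omega)$ of \eqref{V}, observes the homogeneity identity $\nabla\widetilde{V}(ru;r^{-(\alpha+1)}\omega)=r^{-\alpha}\nabla\widetilde{V}(u;\omega)$, cites \cite{GaIz11} for the fact that the \emph{unit} polygon is a critical point of $\widetilde{V}(\,\cdot\,;s_{1})$, and then rescales by $r=s_{1}^{1/(\alpha+1)}$ to trade frequency $s_{1}$ for frequency one. Your proof is a self-contained, first-principles verification that in effect reproves the cited fact from \cite{GaIz11} while simultaneously fixing the radius; what it buys is elementarity and a transparent derivation of the constant $s_{1}$ from the trigonometric sum. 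What the paper's argument buys is the explicit scaling structure, which is not a throwaway: the same identity \eqref{scaling property} is reused immediately afterwards, in the proof of Proposition \ref{prop: normal form}, to relate $\nabla_{u}^{2}V[a]$ to $\nabla_{u}^{2}\widetilde{V}[(\widetilde{a};s_{1})]$ and import the block matrices $\widetilde{B}_{j}$ from \cite{GaIz11}; a direct computation at the scaled polygon would have to redo that diagonalisation by hand.
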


\begin{proof}
We shall show that $a$ is a critical point of the amended potential $V$.
Consider the function
\begin{equation}
\widetilde{V}(u;\omega)=\frac{\omega}{2}\sum_{j=1}^{k}\left\Vert
u_{j}\right\Vert ^{2}+\sum_{1\leq i<j\leq k}\phi_{\alpha}\left(  \left\Vert
u_{j}-u_{i}\right\Vert \right)  .\label{V}%
\end{equation}
It satisfies $\widetilde{V}(u;1)=V(u)$ and has the scaling property
\begin{equation}
\nabla\widetilde{V}(ru;r^{-\left(  \alpha+1\right)  }\omega)=r^{-\alpha}%
\nabla\widetilde{V}(u;\omega)\quad\mbox{for any}\quad
r>0.\label{scaling property}%
\end{equation}
Thus $(\widetilde{a};\omega)$ is critical point of $\widetilde{V}$ if and only
if $(r\widetilde{a};r^{-\left(  \alpha+1\right)  }\omega)$ is a critical
points of $\widetilde{V}$ for any $r>0$. By \cite{GaIz11}, the unitary
polygon
\[
\widetilde{a}=(\exp(J\zeta)e_{1},\dots,\exp(kJ\zeta)e_{1}),\quad e_{1}=%
\begin{bmatrix}
1\\
0
\end{bmatrix}
\]
with $\omega=s_{1}$ is a critical point of $\widetilde{V}$. It follows that
$(a;1)$ where $a=(s_{1})^{\frac{1}{\alpha+1}}\widetilde{a}$ is a critical
point of $\widetilde{V}(u;1)=V(u)$. In other words, $a$ is a central
configuration with frequency one.
\end{proof}

\subsubsection*{Block diagonalisation}

Let $S_{k}$ be the permutation group of $k$ letters. The group $G=S_{k}\times
SO(2)$ acts on $E^{k}$ by
\[
(\sigma,\theta)\cdot(u_{1},\dots,u_{k})=\left(  \exp(-J\theta)u_{\sigma
(1)},\dots,\exp(-J\theta)u_{\sigma(k)}\right)  .
\]
The amended potential \eqref{section4 amended potential} is $G$-invariant. Let
$a\in E^{k}$ be the polygonal configuration \eqref{polygonal configuration}.
Its stabiliser is the subgroup $G_{a}=C_{k}$ generated by the element
$(\sigma,\zeta)\in G$ where $\sigma=(1\,2\,\dots\,k)$ and $\zeta=\frac{2\pi
}{k}$. The $G_{a}$-equivariant property of the Hessian $\nabla_{u}^{2}V[a]$ is
used in Proposition 7 of \cite{GaIz11} to find the irreducible representations
of $(E^{\mathbb{C}})^{k}$. By Schur's lemma the Hessian of $V$ is equivalent
to a block diagonal matrix with $k$ blocks corresponding to the isotypic components.

\begin{definition}
For $j=1,\dots,k$, we define isomorphisms $T_{j}:E^{\mathbb{C}}\rightarrow
W_{j}$ by%
\begin{equation}
T_{j}(w)=\frac{1}{\sqrt{k}}(\exp((ijI+J)\zeta)w,\dots,\exp(k(ijI+J)\zeta
)w),\label{Tmu}%
\end{equation}
where
\[
W_{j}=\left\{  (\exp((ijI+J)\zeta)w,\dots,\exp(k(ijI+J)\zeta)w)\mid w\in
E^{\mathbb{C}}\right\}  \subset(E^{\mathbb{C}})^{k}.
\]

\end{definition}

Specifically, in \cite{GaIz11} is proved that the subspaces $W_{j}$ are the
\defn{isotypic components} under the action of $G_{a}$. The group $G_{a}$ acts
on each subspace $W_{j}$ by rotating each component by $\exp(ij\zeta J)$.
Since the subspaces $W_{j}$ are mutually orthogonal, the endomorphism
$P\in\mbox{End}((E^{\mathbb{C}})^{k})$ defined by
\[
P(w_{1},\dots,w_{k})=\sum_{j=1}^{k}T_{j}(w_{j})
\]
is orthogonal. Since $P$ rearranges the coordinates of the isotypic
decomposition, it follows by Schur's Lemma that
\begin{equation}
P^{-1}\nabla_{u}^{2}V[a]P=B_{1}\oplus\dots\oplus B_{k}\label{diaghess}%
\end{equation}
where each $B_{j}\in\mbox{End}(E^{\mathbb{C}})$ satisfies
\begin{equation}
\nabla_{u}^{2}V[a]T_{j}(w)=T_{j}(B_{j}w).\label{eigen}%
\end{equation}

Define
\begin{equation}
s_{j}=\frac{1}{2^{\alpha}}\sum_{l=1}^{k-1}\frac{\sin^{2}(jl\zeta/2)}%
{\sin^{\alpha+1}(l\zeta/2)},\qquad\zeta=\frac{2\pi}{k}\text{.}%
\end{equation}
Following \cite{GaIz11}, the numbers $s_{j}$ have the following properties:
They are $k$-periodic, that is $s_{k+j}=s_{j}$.
They are symmetric with respect to $[k/2]$, that is $s_{[k/2]-j}=s_{[k/2]-j}$.
They increase as
$j$ increases, that is
\[
s_{j+1}>s_{j},\qquad 0\leq j\leq n/2\text{.}%
\]
In particular, $s_{j}>s_{0}=0$ for
$j=1,...,[n/2]$ with its maximum attained at $j=[n/2].$

\begin{proposition}
[Normal form of the amended potential]\label{prop: normal form} Each
endomorphism $B_{j}$ is a matrix of the form%
\begin{equation}
B_{j}=(1+\alpha_{j})I-\beta_{j}R-\gamma_{j}iJ\label{Bell}%
\end{equation}
where $I$ is the identity matrix,
\[
R=%
\begin{pmatrix}
1 & 0\\
0 & -1
\end{pmatrix}
\qquad J=%
\begin{pmatrix}
0 & -1\\
1 & 0
\end{pmatrix}
\]
and the coefficients are given by
\begin{equation}
\alpha_{j}=\frac{\alpha-1}{4s_{1}}(s_{j+1}+s_{j-1}),\qquad\beta_{j}%
=\frac{\alpha+1}{2s_{1}}(s_{j}-s_{1}),\qquad\gamma_{j}=\frac{\alpha-1}{4s_{1}%
}(s_{j+1}-s_{j-1}).\label{coeff}%
\end{equation}
\label{EnBk2}
\end{proposition}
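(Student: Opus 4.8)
The plan is to compute the Hessian $\nabla_u^2 V[a]$ explicitly and then read off each block $B_j$ directly from the defining relation \eqref{eigen}, $\nabla_u^2 V[a]\,T_j(w) = T_j(B_j w)$, exploiting the rotational structure of the polygon. First I would record the Hessian of a single pair interaction. Writing $f(r)=\phi_\alpha(\|r\|)$ and using $\phi_\alpha'(r)=-r^{-\alpha}$, a direct computation gives $\nabla^2 f(r) = \|r\|^{-\alpha-1}\big((\alpha+1)\,\hat r\otimes\hat r - I\big)$ with $\hat r = r/\|r\|$. Since $V$ depends only on the differences $u_j-u_i$, the action of $\nabla_u^2 V[a]$ on a configuration $x=(x_1,\dots,x_k)$ has $l$-th component $x_l + \sum_{m\ne l}\nabla^2 f(a_l-a_m)(x_l-x_m)$, the leading $x_l$ coming from the quadratic term $\tfrac12\sum_j\|u_j\|^2$ in \eqref{section4 amended potential}.

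Next I would evaluate the chords of the polygon. From \eqref{polygonal configuration} one obtains $a_l-a_m = 2c\,\sin\!\big(\tfrac{(l-m)\zeta}{2}\big)\exp\!\big(\tfrac{(l+m)\zeta}{2}J\big)Je_1$ with $c=s_1^{1/(\alpha+1)}$, so the separation $d_{l-m}=2c\,|\sin((l-m)\zeta/2)|$ depends only on $p:=l-m$, and the unit chord is a rotation of $e_2=Je_1$. Using $e_2e_2^{\top}=\tfrac12(I-R)$ together with the anticommutation $RJ=-JR$ (equivalently $R\exp(\theta J)=\exp(-\theta J)R$), the rank-one part becomes $\hat r\otimes\hat r = \tfrac12\big(I-\exp((l+m)\zeta J)R\big)$, whence
\[
\nabla^2 f(a_l-a_m)=\kappa_p\Big(\tfrac{\alpha-1}{2}I-\tfrac{\alpha+1}{2}\exp((l+m)\zeta J)R\Big),\qquad \kappa_p:=\tfrac{1}{2^{\alpha+1}s_1\sin^{\alpha+1}(p\zeta/2)} .
\]

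The heart of the argument is to apply this to $T_j(w)$, whose $l$-th component is $\tfrac{1}{\sqrt k}e^{ijl\zeta}\exp(lJ\zeta)w$. Reindexing the sum by $p=l-m$ and repeatedly using the anticommutation relation to move $\exp(lJ\zeta)$ to the left, all of the $l$-dependence factors out as $\tfrac{1}{\sqrt k}e^{ijl\zeta}\exp(lJ\zeta)$, which confirms \eqref{eigen} and yields the closed form
\[
B_j = I + \sum_{p=1}^{k-1}\kappa_p\Big(\tfrac{\alpha-1}{2}I-\tfrac{\alpha+1}{2}\exp(-p\zeta J)R\Big)\big(I-e^{-ijp\zeta}\exp(-p\zeta J)\big).
\]
Expanding the product (and simplifying $\exp(-p\zeta J)R\exp(-p\zeta J)=R$) splits $B_j$ into components along $I$, $R$ and $iJ$, while the component along $JR$ drops out. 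I would then evaluate the resulting scalar sums by the product-to-sum identities, the key input being $\sum_p\kappa_p\sin^2(jp\zeta/2)=s_j/(2s_1)$ (equivalently $\sum_p\kappa_p\cos(jp\zeta)=S_0-s_j/s_1$, with $S_0:=\sum_p\kappa_p$), while every sum of the form $\sum_p\kappa_p\sin(\cdot\,p\zeta)$ vanishes by the symmetry $p\leftrightarrow k-p$. The cross terms $\cos(jp\zeta)\cos(p\zeta)$ and $\sin(jp\zeta)\sin(p\zeta)$ produce exactly $s_{j+1}\pm s_{j-1}$, and the unknown constant $S_0$ cancels in the coefficient of $I$. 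Collecting terms gives precisely $B_j=(1+\alpha_j)I-\beta_j R-\gamma_j iJ$ with the stated $\alpha_j,\beta_j,\gamma_j$.

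The main obstacle is the bookkeeping in the last two steps: factoring the $l$-dependence out cleanly relies on the anticommutation identities between $R$, $J$ and the rotations, and one must track the phase factors $e^{-ijp\zeta}$ carefully so that the product-to-sum expansion delivers the shifted sums $s_{j\pm1}$ and the imaginary unit lands correctly on $J$ to form the $iJ$ term. I expect the verification that the $S_0$-terms cancel — so that $B_j$ depends only on the $s_j$ — to be the point requiring the most care.
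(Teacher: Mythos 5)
Your proposal is correct, and every step checks out, but it takes a genuinely different route from the paper. The paper's own proof is essentially a reduction plus a citation: it uses the scaling property \eqref{scaling property} to write $\nabla_u^2 V[a]=s_1^{-1}\nabla_u^2\widetilde V[(\widetilde a;s_1)]$, reducing to the unit polygon with frequency $s_1$, and then imports the blocks $\widetilde B_j=(s_1+\widetilde\alpha_j)I-\widetilde\beta_j R-\widetilde\gamma_j\, iJ$ ready-made from \cite{GaIz11}, so that $B_j=\widetilde B_j/s_1$ gives \eqref{coeff} immediately. You instead supply from scratch the computation the paper outsources, and the details work as you outline: the pair Hessian $\nabla^2 f(r)=\|r\|^{-\alpha-1}\bigl((\alpha+1)\hat r\otimes\hat r-I\bigr)$ is right; the chord identity and $\hat r\otimes\hat r=\tfrac12\bigl(I-\exp((l+m)\zeta J)R\bigr)$ follow from $e_2e_2^{\top}=\tfrac12(I-R)$ and $R\exp(\theta J)=\exp(-\theta J)R$; after reindexing by $l-m$ the $l$-dependence factors out exactly as claimed (the needed identity being $\exp(-p\zeta J)R\bigl(I-e^{-ijp\zeta}\exp(-p\zeta J)\bigr)=\exp(-p\zeta J)R-e^{-ijp\zeta}R$), which reproves \eqref{eigen} along the way rather than invoking Schur's lemma; and expanding your closed form using $\exp(-p\zeta J)R\exp(-p\zeta J)=R$, the evaluation $\sum_p\kappa_p\cos(jp\zeta)=S_0-s_j/s_1$, the vanishing of all $\sin$-sums under $p\leftrightarrow k-p$, and the product-to-sum identities gives coefficient of $I$ equal to $1+\tfrac{\alpha-1}{4s_1}(s_{j+1}+s_{j-1})$, coefficient of $R$ equal to $-\tfrac{\alpha+1}{2}(S_0-1)+\tfrac{\alpha+1}{2}(S_0-s_j/s_1)=-\beta_j$, and coefficient of $iJ$ equal to $-\tfrac{\alpha-1}{4s_1}(s_{j+1}-s_{j-1})$, with $S_0$ cancelling precisely where you predicted and the $JR$-component dying because it carries a $\sin$-sum. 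As for what each approach buys: the paper's argument is two lines and stays consistent with the normalization of \cite{GaIz11} (unit polygon, frequency $s_1$), at the cost of not being self-contained; yours is self-contained, makes visible why only $I$, $R$, $iJ$ survive and where the shifted sums $s_{j\pm1}$ come from, and uses only $\phi_\alpha'(r)=-r^{-\alpha}$ rather than homogeneity, so it treats $\alpha=1$ and $\alpha>1$ uniformly without the logarithmic caveat — at the cost of the trigonometric bookkeeping you correctly identified as the delicate point. One cosmetic caveat: you reuse the letter $p$ for the chord index $l-m$, which collides with the paper's fixed integer $p$ in the $2\pi p$-nondegeneracy condition; rename it to avoid confusion.
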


\begin{proof}
By the scaling property \eqref{scaling property} of the amended potential
$\widetilde{V}$ given in \eqref{V}, we have%
\begin{equation}
\nabla_{u}^{2}V[a]=\nabla_{u}^{2}\widetilde{V}[(a\mathbf{;}1)]=\frac{1}{s_{1}%
}\nabla_{u}^{2}\widetilde{V}[(\widetilde{a}\mathbf{;}s_{1})].\label{scaling}%
\end{equation}
By \eqref{eigen} we get
\[
\nabla_{u}^{2}V[a]T_{j}(w)=\frac{1}{s_{1}}\nabla_{u}^{2}\widetilde
{V}[(\widetilde{a}\mathbf{;}s_{1})]T_{j}(w)=T_{j}\left(  \frac{1}{s_{1}%
}\widetilde{B}_{j}w\right)  .
\]
The matrices $\widetilde{B}_{j}$ are computed in \cite{GaIz11} and
are given by
\[
\widetilde{B}_{j}=(s_{1}+\widetilde{\alpha}_{j})I-\widetilde{\beta}_{j
}R-\widetilde{\gamma}_{j}iJ
\]
where
\[
\widetilde{\alpha}_{j}=\frac{\alpha-1}{4}(s_{j+1}+s_{j-1}),\qquad
\widetilde{\beta}_{j}=\frac{\alpha+1}{2}(s_{j}-s_{1}),\qquad
\widetilde{\gamma}_{j}=\frac{\alpha-1}{4}(s_{j+1}-s_{j-1}).
\]
The result follows from $\nabla_{u}^{2}V[a]T_{j}(w)=T_{j}(B_{j}w)$ with
$B_{j}=\frac{1}{s_{1}}\widetilde{B}_{j}$.
\end{proof}

We can now find a block diagonalisation of the matrix
\[
M_{a}(\lambda)=\lambda^{2}\mathcal{I}-2i\lambda\mathcal{J}+\nabla_{u}^{2}V[a]
\]
that appears in \eqref{Ma}. With respect to the isotypic decompositions, we get by
\eqref{diaghess} 
\[
P^{-1}M_{a}(\lambda)P=m_{1}(\lambda)\oplus\dots\oplus m_{k}(\lambda),
\]
where each block $m_{j}(\lambda)\in\mbox{End}(E^{\mathbb{C}})$ is given by
\begin{equation}
m_{j}(\lambda)=\lambda^{2}I-2\lambda iJ+B_{j}.\label{Ecblock}%
\end{equation}

\subsubsection*{Zero eigenvalues arising from symmetries}

The analysis of the spectrum of the blocks $m_{j}(\lambda)$ for $j=1,k-1,k$ is
special because the block $m_{k}(\lambda)$ contains the generator of the
$SO(2)$-orbit and the blocks $m_{1}(\lambda)$ and $m_{k-1}(\lambda)$ contain vectors in the
orthogonal complement to $(E_{red}^k)^{\mathbb{C}}$. These cases are treated
separately in Lemma \ref{Prop1} and Lemma \ref{Prop2}.

\begin{lemma}
\label{Prop1} Given the block diagonal matrix $P^{-1}M_{a}(\lambda
)P=m_{1}(\lambda)\oplus\dots\oplus m_{k}(\lambda)$,

\begin{enumerate}
\item The block $m_{k}(\ell/p)$ is invertible for all $\ell\in\mathbb{Z}%
/\{0\}$ when $p\left(  3-\alpha\right)  ^{1/2}\notin\mathbb{N}$.

\item The block $m_{k}(0)\in\mbox{End}(E^{\mathbb{C}})$ is invertible on the
orthogonal complement of the line spanned by $e_{2}=Je_{1}$ which is a
generator of the space tangent to the $SO(2)$-orbit at $a$.
\end{enumerate}
\end{lemma}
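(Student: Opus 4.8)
The plan is to compute the block $m_k(\lambda)$ explicitly and read off its determinant, from which both statements follow immediately. First I would evaluate the coefficients $\alpha_k,\beta_k,\gamma_k$ of \eqref{coeff} using the properties of the numbers $s_j$ listed above: by $k$-periodicity we have $s_k=s_0=0$ and $s_{k+1}=s_1$, while $s_{k-1}=s_{-1}=s_1$ since $s_{-j}=s_j$. This gives $\alpha_k=\tfrac{\alpha-1}{2}$, $\beta_k=-\tfrac{\alpha+1}{2}$ and $\gamma_k=0$, so that by \eqref{Bell}
\begin{equation*}
B_k=\frac{\alpha+1}{2}(I+R)=\begin{pmatrix}\alpha+1&0\\0&0\end{pmatrix}.
\end{equation*}
Substituting this into \eqref{Ecblock} yields
\begin{equation*}
m_k(\lambda)=\begin{pmatrix}\lambda^2+\alpha+1&2i\lambda\\-2i\lambda&\lambda^2\end{pmatrix},\qquad \det m_k(\lambda)=\lambda^2\big(\lambda^2-(3-\alpha)\big).
\end{equation*}

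For part (1) I would substitute $\lambda=\ell/p$. Since $\ell\neq 0$ the factor $\lambda^2$ never vanishes, so $m_k(\ell/p)$ is singular exactly when $(\ell/p)^2=3-\alpha$, i.e. $\ell=\pm p(3-\alpha)^{1/2}$. When $3-\alpha<0$ there is no real solution and the block is invertible for every $\ell$; when $3-\alpha\ge 0$, a nonzero integer $\ell$ with $\ell^2=p^2(3-\alpha)$ exists precisely when $p(3-\alpha)^{1/2}\in\mathbb{N}$. Hence, under the hypothesis $p(3-\alpha)^{1/2}\notin\mathbb{N}$, the block $m_k(\ell/p)$ is invertible for all $\ell\in\mathbb{Z}\setminus\{0\}$.

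For part (2) I would set $\lambda=0$, so that $m_k(0)=B_k$. From the matrix form of $B_k$ its kernel is spanned by $e_2=Je_1$, while $B_k e_1=(\alpha+1)e_1\neq 0$ shows that $m_k(0)$ is invertible on the orthogonal complement $\mathrm{span}(e_1)$. It remains to identify $e_2$ with the generator of the tangent to the $SO(2)$-orbit. For this I would track $\mathcal{J}_k a$ through the isomorphism $T_k$ of \eqref{Tmu}: because $\exp(ik\zeta I)=\exp(2\pi i\,I)=I$, the factor $\exp((ikI+J)\ell\zeta)$ collapses to $\exp(\ell\zeta J)$, whence $T_k(e_1)$ is proportional to $a$ and, since $J$ commutes with $\exp(\ell\zeta J)$, the vector $\mathcal{J}_k a$ is proportional to $T_k(Je_1)=T_k(e_2)$. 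Thus the orbit direction $\mathcal{J}_k a$ lies in the $k$-th isotypic block and corresponds to $e_2$, as claimed.

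The routine parts are the trigonometric identities behind $s_k=0$ and $s_{k\pm1}=s_1$ together with the $2\times 2$ determinant. The only point requiring care is the collapse $\exp(ik\zeta I)=I$, which is exactly what singles out the $k$-th block as the one containing the rotational zero mode; getting the indexing right there—so that $e_2$, rather than some other vector, is the generator of the orbit tangent—is the main thing I expect to have to verify carefully.
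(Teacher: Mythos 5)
Your proposal is correct and follows essentially the same route as the paper: both compute $\alpha_k,\beta_k,\gamma_k$ from the symmetry properties of the $s_j$, obtain the explicit matrix $m_k(\lambda)$, locate its singular values at $\lambda\in\{0,\sqrt{3-\alpha}\}$, and identify $e_2$ with the orbit generator via $\mathcal{J}_k a \propto T_k(e_2)$. The only cosmetic difference is that you read the singular set off the factored determinant $\lambda^2\bigl(\lambda^2-(3-\alpha)\bigr)$ while the paper computes the two eigenvalues $\mu_k^{\pm}(\lambda)$ of the Hermitian block, which amounts to the same computation.
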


\begin{proof}
\begin{enumerate}
\item The coefficients in \eqref{EnBk2} have the property $s_{j}=s_{-j
}=s_{k-j}$ and $s_{k}=0$. The coefficients \eqref{coeff} are then given by
\[
\alpha_{k}=\frac{\alpha-1}{2},\qquad\beta_{k}=-\frac{\alpha+1}{2},\qquad
\gamma_{k}=0.
\]
From \eqref{Bell} we get Thus we have that%
\[
m_{k}(\lambda)=\lambda^{2}I-2\lambda iJ+\frac{\alpha+1}{2}(I+R)=\left(
\begin{array}
[c]{cc}%
\lambda^{2}+\alpha+1 & 2\lambda i\\
-2\lambda i & \lambda^{2}%
\end{array}
\right)  
\]
which is the linearisation of the Kepler problem as in \cite{Braids} and
corresponds to the invariant manifold of homographic solutions. This matrix
has eigenvalues
\[
\mu_{k}^{\pm}(\lambda)=\frac{\alpha+1}{2}+\lambda^{2}\pm\frac{1}{2}%
\sqrt{(\alpha+1)^{2}+16\lambda^{2}}.
\]
The eigenvalue $\mu_{k}^{+}(\lambda)\neq0$ for all $\lambda$ and $\mu_{k}%
^{-}(\lambda)\neq0$ if $\lambda\notin\{0,\sqrt{3-\alpha}\}$. In our analysis,
$\lambda=\ell/p$ where $\ell,p$ are integers and $\ell\neq0$. In particular,
the operator is invertible if we suppose that $p\left(  3-\alpha\right)
^{1/2}\notin\mathbb{N}$. When $\alpha=2$, note that the eigenvalues $\mu
_{k}^{-}(\ell/p)$ for $\ell=\pm p$ are equal to zero due to the existence of
the homographic elliptic orbits of the gravitational $k$-body problem.

\item Since $a=s_{1}^{\frac{1}{\alpha+1}}(\exp(J\zeta)e_{1},\dots,\exp
(kJ\zeta)e_{1})$, the generator of the tangent space of its $SO(2)$-orbit is
\[
\mathcal{J}_{k}a=s_{1}^{\frac{1}{\alpha+1}}(J\exp(J\zeta)e_{1},\dots
,J\exp(kJ\zeta)e_{1})=s_{1}^{\frac{1}{\alpha+1}}T_{k}(e_{2})
\]
since $Je_{1}=e_{2}$. Therefore, the matrix $m_{k}(0)=\mbox{diag}(\alpha+1,0)$
is singular only on the space tangent to the $U(1)$-orbit of $a$ which is
generated by $e_{2}$ in $E^{\mathbb{C}}$.
\end{enumerate}\end{proof}

We now analyze the spectrum of $m_{1}(\lambda)$ and $m_{k-1}(\lambda)$. In our
analysis, the matrix $M_{a}(\lambda)$ is an endomorphism of the subspace
reduced by the symmetry of translations $(E_{red}^{k})^{\mathbb{C}}$.
Therefore, the spectrum of the blocks $m_{1}(\lambda)$ and $m_{k-1}(\lambda)$
must be analyzed on the subspaces $T_{j}^{-1}((W_{j})_{red})$ for $j=1$ and
$j=k-1$ respectively. We use the notation%

\[
(W_{j})_{red}=W_{j}\cap(E_{red}^{k})^{\mathbb{C}}.
\]

\begin{lemma}
\label{Prop2}If $\alpha>1$, the matrix $m_{j}(\lambda)$ restricted to the
subspace ${T_{j}^{-1}((W_{j})_{red})}$ for $j=1,k-1$ is invertible for any
$\lambda$.
\end{lemma}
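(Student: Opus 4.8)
The plan is to diagonalize both blocks with respect to the complex structure $J$ and thereby reduce each assertion to the strict positivity of a single scalar. Work in the eigenbasis of $J$ on $E^{\mathbb{C}}$: let $v_{\pm}$ be the (orthogonal) eigenvectors with $Jv_{\pm}=\pm i\,v_{\pm}$, e.g. $v_{+}=(i,1)^{T}$ and $v_{-}=(-i,1)^{T}$, so that $\exp(lJ\zeta)v_{\pm}=e^{\pm il\zeta}v_{\pm}$. First I would identify the one-dimensional lines $T_{j}^{-1}((W_{j})_{red})$ for $j=1,k-1$. Writing $w=w_{+}v_{+}+w_{-}v_{-}$, the center of mass of $T_{j}(w)$ is
\[
\tfrac{1}{\sqrt{k}}\sum_{l=1}^{k}e^{ilj\zeta}\exp(lJ\zeta)w=\tfrac{1}{\sqrt{k}}\Bigl(w_{+}v_{+}\sum_{l=1}^{k}e^{il(j+1)\zeta}+w_{-}v_{-}\sum_{l=1}^{k}e^{il(j-1)\zeta}\Bigr).
\]
Since $\sum_{l=1}^{k}e^{ilm\zeta}$ equals $k$ when $m\equiv 0\ (\mathrm{mod}\ k)$ and vanishes otherwise, for $k\geq 3$ the vector $T_{1}(w)$ has zero center of mass iff $w_{-}=0$ and $T_{k-1}(w)$ iff $w_{+}=0$. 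Hence $T_{1}^{-1}((W_{1})_{red})=\mbox{span}(v_{+})$ and $T_{k-1}^{-1}((W_{k-1})_{red})=\mbox{span}(v_{-})$.

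The crucial simplification, which I would point out next, is that the $R$-term disappears at these two indices. From the symmetry $s_{j}=s_{k-j}$ in the stated properties of the $s_{j}$ one has $s_{k-1}=s_{1}$, so by \eqref{coeff}
\[
\beta_{1}=\tfrac{\alpha+1}{2s_{1}}(s_{1}-s_{1})=0,\qquad \beta_{k-1}=\tfrac{\alpha+1}{2s_{1}}(s_{k-1}-s_{1})=0.
\]
Consequently $m_{1}(\lambda)$ and $m_{k-1}(\lambda)$ in \eqref{Ecblock} reduce to $(\lambda^{2}+1+\alpha_{j})I-(2\lambda+\gamma_{j})iJ$, and since both $I$ and $iJ$ are diagonal in the basis $v_{\pm}$, each block is diagonal there and in particular preserves the reduced lines. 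Thus there is no projection subtlety: the restriction to $T_{j}^{-1}((W_{j})_{red})$ is genuine multiplication by the relevant eigenvalue, and invertibility means this scalar is nonzero.

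Then I would compute the two eigenvalues using $iJv_{+}=-v_{+}$ and $iJv_{-}=v_{-}$. On $v_{+}$ the block $m_{1}(\lambda)$ acts by $(\lambda^{2}+1+\alpha_{1})+(2\lambda+\gamma_{1})=(\lambda+1)^{2}+(\alpha_{1}+\gamma_{1})$, and on $v_{-}$ the block $m_{k-1}(\lambda)$ acts by $(\lambda^{2}+1+\alpha_{k-1})-(2\lambda+\gamma_{k-1})=(\lambda-1)^{2}+(\alpha_{k-1}-\gamma_{k-1})$. Using $s_{0}=s_{k}=0$ and $s_{k-2}=s_{2}$ in \eqref{coeff} gives
\[
\alpha_{1}+\gamma_{1}=\tfrac{(\alpha-1)s_{2}}{2s_{1}},\qquad \alpha_{k-1}-\gamma_{k-1}=\tfrac{(\alpha-1)s_{2}}{2s_{1}},
\]
so both restrictions equal $(\lambda\pm1)^{2}+\tfrac{(\alpha-1)s_{2}}{2s_{1}}$.

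Finally I would conclude by a sign argument. For $k\geq 3$ both $s_{1}$ and $s_{2}$ are strictly positive (sums of nonnegative terms, with $s_{2}>0$ because $\sin(l\zeta)$ does not vanish for some $1\leq l\leq k-1$), so under the hypothesis $\alpha>1$ the constant $\tfrac{(\alpha-1)s_{2}}{2s_{1}}$ is strictly positive. Hence $(\lambda\pm1)^{2}+\tfrac{(\alpha-1)s_{2}}{2s_{1}}>0$ for every real $\lambda$, and in particular for $\lambda=\ell/p$, which gives the invertibility of $m_{j}(\lambda)$ on $T_{j}^{-1}((W_{j})_{red})$ for $j=1,k-1$. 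The only delicate point is the first step — correctly pinning down the reduced lines through the center-of-mass computation — after which the observation $\beta_{1}=\beta_{k-1}=0$ trivializes the diagonalization and the positivity is forced by $\alpha>1$.
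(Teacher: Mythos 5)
Your proposal is correct and takes essentially the same approach as the paper: both diagonalize $m_{1}(\lambda)$ and $m_{k-1}(\lambda)$ in the $J$-eigenbasis $(1,\pm i)$ (possible precisely because $\beta_{1}=\beta_{k-1}=0$), discard the direction whose image under $T_{j}$ is the constant vector with nonzero center of mass, and reduce invertibility on $T_{j}^{-1}((W_{j})_{red})$ to the positivity of $(\lambda\pm 1)^{2}+\tfrac{(\alpha-1)s_{2}}{2s_{1}}$ for $\alpha>1$. The only cosmetic difference is that you pin down the reduced lines upfront via the roots-of-unity sum, whereas the paper exhibits the excluded eigenvector directly by computing $T_{j}(w_{1})=\tfrac{1}{\sqrt{k}}(w_{1},\dots,w_{1})$; the underlying computation is identical.
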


\begin{proof}
\paragraph{Case $j=1$.}
By \eqref{Ecblock} and Proposition \ref{prop: normal form}, the coefficients
in \eqref{Bell} are
\[
\alpha_{1}=\gamma_{1}=\frac{\alpha-1}{4s_{1}}s_{2}\qquad\beta_{1}=0
\]
and then
\[
m_{1}(\lambda)=(\lambda^{2}+1+\alpha_{1})I-\left(  2\lambda+\alpha_{1}\right)
iJ=\left(
\begin{array}
[c]{cc}%
\lambda^{2}+1+\alpha_{1} & \left(  2\lambda+\alpha_{1}\right)  i\\
-\left(  2\lambda+\alpha_{1}\right)  i & \lambda^{2}+1+\alpha_{1}%
\end{array}
\right)  \text{.}%
\]
The eigenvalues of $m_{1}(\lambda)$ are $\mu_{1}(\lambda)=\left(
\lambda-1\right)  ^{2}\ $with eigenvector $w_{1}=(1,i)$ and $\mu_{2}%
(\lambda)=\left(  \lambda+1\right)  ^{2}+2\alpha_{1}$ with eigenvector
$w_{2}=(1,-i)$. The first eigenvalue $\mu_{1}(\lambda)$ vanishes when
$\lambda=1$, but  we will show that the corresponding eigenvector $w_{1}$ does not belong to
$T_{1}^{-1}((W_{1})_{red})$. 
By \eqref{Tmu}
\[
T_{1}(w_{1})=\frac{1}{\sqrt{k}}(\exp((i I+J)\zeta)w_{1},\dots,\exp(k(i
I+J)\zeta)w_{1}).
\]
Observe that
\[
\exp(j\zeta J)w_{1}=\left(
\begin{array}
[c]{cc}%
\cos j\zeta & -\sin j\zeta\\
\sin j\zeta & \cos j\zeta
\end{array}
\right)  \left(
\begin{array}
[c]{c}%
1\\
i
\end{array}
\right)  =\left(
\begin{array}
[c]{c}%
\cos\left(  j\zeta\right)  -i\sin\left(  j\zeta\right) \\
i\cos\left(  j\zeta\right)  +\sin\left(  j\zeta\right)
\end{array}
\right)  =e^{-ij\zeta}I w_{1}%
\]
for $j=1, \dots, k$ from which it follows that
\[
T_{1}(w_{1})=\frac{1}{\sqrt{k}}\left(  w_{1}, \dots, w_{1}\right) .
\]
In particular, $T_{1}(w_{1})$ does not belong to $(W_{1})_{red}$. The matrix
$m_{1}(\lambda)$ restricted to $T_{1}^{-1}((W_{1})_{red})$ is given by
$\left(  \lambda+1\right)  ^{2}+2\alpha_{1}$, which is invertible for
$\alpha>1$.

\paragraph{Case $j=k-1$.}

By \eqref{Ecblock} and Proposition \ref{prop: normal form}, the coefficients
in \eqref{Bell} are
\[
\alpha_{k-1}=-\gamma_{k-1}=\frac{\alpha-1}{4s_{1}}s_{2}=\alpha_{1}\qquad
\beta_{1}=0
\]
and then
\[
m_{k-1}(\lambda)=\left(
\begin{array}
[c]{cc}%
\lambda^{2}+1+\alpha_{1} & \left(  2\lambda-\alpha_{1}\right)  i\\
-\left(  2\lambda-\alpha_{1}\right)  i & \lambda^{2}+1+\alpha_{1}%
\end{array}
\right)
\]
are $\mu_{1}(\lambda)=\left(  \lambda+1\right)  ^{2}$ with eigenvector
$w_{1}=(1,-i)$ and $\mu_{2}(\lambda)=\left(  \lambda-1\right)  ^{2}%
+2\alpha_{1}$ with eigenvector $w_{2}=(1,i)$. Since $\exp(j\zeta
J)w_{1}=e^{ij\zeta}Iw_{1}$ for $j=1,\dots, k$, we get
\[
T_{k-1}(w_{1})=\frac{1}{\sqrt{k}}(w_{1}, \dots, w_{1}).
\]
In particular, $T_{k-1}(w_{1})$ does not belong to $(W_{k-1})_{red}$. The
matrix $m_{k-1}(\lambda)$ restricted to $T_{k-1}^{-1}((W_{k-1})_{red})$ is
given by $\left(  \lambda-1\right)  ^{2}+2\alpha_{1}$, which is invertible for
$\alpha>1$.
\end{proof}

\subsection{The \texorpdfstring{$2\pi p$}--nondegeneracy property of the \texorpdfstring{$k$}--polygon for weak
forces}

We now study the special case of week forces. 

\begin{proposition}
Assume that  

$$\mbox{(i)}\quad p\left(  3-\alpha\right)  ^{1/2}\notin\mathbb{N}
\qquad \mbox{(ii)}\quad p^{2}\frac{j(k-j)}{k-1}\notin\mathbb{N}\quad j=2,...,k-2.$$

Then there exists $\delta>0$
such that the $k$-polygon is $2\pi p$-nondegenerate for any $\alpha
\in(1,1+\delta)$.
\end{proposition}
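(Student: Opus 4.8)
The plan is to argue by perturbation from the limiting case $\alpha=1$, using the block decomposition already in hand. By the discussion preceding the statement, the $2\pi p$-nondegeneracy of the $k$-polygon is equivalent to the invertibility of every block $m_j(\ell/p)$ restricted to $(W_{j})_{red}$ for all $\ell\neq 0$, together with the requirement that the kernel of $\bigoplus_j m_j(0)|_{(W_{j})_{red}}$ be exactly $\mbox{span}(\mathcal{J}_k a)$. Lemma \ref{Prop1} disposes of the block $j=k$ under hypothesis (i): it is invertible for $\ell\neq 0$, and at $\ell=0$ its only kernel direction is $e_2\propto\mathcal{J}_k a$, which is precisely the admissible zero mode. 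Lemma \ref{Prop2} disposes of $j=1,k-1$ on $(W_{j})_{red}$ for every $\lambda$ since $\alpha>1$. Moreover, as shown in Lemma \ref{Prop2}, the two translation directions are the constant vectors $T_1(w_1)$ and $T_{k-1}(w_1)$, so they lie in $W_1\oplus W_{k-1}$; hence $(W_{j})_{red}=W_j$ for every $j=2,\dots,k-2$. It therefore only remains to show that $m_j(\ell/p)$ is invertible for all $\ell\in\mathbb{Z}$ and all $j=2,\dots,k-2$.

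First I would compute the blocks in the limit $\alpha\to 1^{+}$. At $\alpha=1$ the coefficients of Proposition \ref{prop: normal form} collapse to $\alpha_j=\gamma_j=0$ and $\beta_j=(s_j-s_1)/s_1$, so that $m_j(\lambda)|_{\alpha=1}=\lambda^2 I-2\lambda i J+(I-\beta_j R)$, a $2\times 2$ matrix whose determinant is $(\lambda^2-1)^2-\beta_j^2$. Using the classical identity $\sum_{l=1}^{k-1}\sin^2(jl\pi/k)/\sin^2(l\pi/k)=j(k-j)$ one gets $s_j|_{\alpha=1}=\tfrac12 j(k-j)$, whence $s_1|_{\alpha=1}=\tfrac{k-1}{2}$ and $\beta_j|_{\alpha=1}=j(k-j)/(k-1)-1$. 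The determinant then factors, vanishing exactly when $\lambda^2=j(k-j)/(k-1)$ or $\lambda^2=2-j(k-j)/(k-1)$.

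Next I would check that hypothesis (ii) keeps the lattice $\{\ell/p:\ell\in\mathbb{Z}\}$ away from these two roots at $\alpha=1$. If $(\ell/p)^2$ equalled the first root, then $p^2\,j(k-j)/(k-1)=\ell^2$; if it equalled the second, then $p^2\,j(k-j)/(k-1)=2p^2-\ell^2$. In either case the quantity $p^2\,j(k-j)/(k-1)$ is an integer, and since $j(k-j)>0$ for $2\leq j\leq k-2$ it is a strictly positive integer, hence lies in $\mathbb{N}$, contradicting (ii). Thus $m_j(\ell/p)|_{\alpha=1}$ is invertible for every $\ell\in\mathbb{Z}$ and every $j=2,\dots,k-2$; in particular the mode $\ell=0$ is invertible, which secures the kernel requirement.

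Finally I would promote this from $\alpha=1$ to a neighbourhood by a compactness-plus-continuity argument, which I expect to be the only genuinely delicate point, because there are infinitely many Fourier modes $\ell$ to control simultaneously. The resolution is to separate scales: since $s_1$ is bounded below and the $s_j$ bounded above uniformly for $\alpha\in[1,1+\delta]$, the norms $\Vert B_j\Vert$ are uniformly bounded, so $m_j(\lambda)=\lambda^2 I+O(|\lambda|+1)$ is invertible once $|\lambda|=|\ell/p|$ exceeds a fixed $R$, uniformly in $j$ and in $\alpha\in[1,1+\delta]$. This leaves only the finitely many modes $|\ell|\leq pR$. For each of these finitely many pairs $(j,\ell)$, the determinant $\det m_j(\ell/p)$ is an analytic function of $\alpha$ that is nonzero at $\alpha=1$ by the previous step, so it remains nonzero on some interval $[1,1+\delta_{j,\ell})$. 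Taking $\delta$ to be the minimum of these finitely many $\delta_{j,\ell}$ — shrinking it further if necessary so that hypothesis (i) keeps the $j=k$ block invertible through Lemma \ref{Prop1}, which at $\alpha=1$ amounts to the automatically true statement $p\sqrt{2}\notin\mathbb{N}$ — yields a single $\delta>0$ for which every relevant block is invertible for all $\alpha\in(1,1+\delta)$, save for the prescribed kernel direction $\mathcal{J}_k a$ in the block $j=k$ at $\ell=0$. This establishes conditions (a) and (b), and hence the $2\pi p$-nondegeneracy of the $k$-polygon on $(1,1+\delta)$.
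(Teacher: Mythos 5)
Your proof is correct and follows essentially the same strategy as the paper: dispose of the blocks $j=k$ and $j=1,k-1$ via Lemmas \ref{Prop1} and \ref{Prop2}, compute the middle blocks explicitly at $\alpha=1$ using $s_j=\tfrac12 j(k-j)$, invoke hypothesis (ii), and then combine a uniform large-$|\ell|$ invertibility bound with continuity in $\alpha$ for the finitely many remaining modes. In fact, on one point you are more careful than the paper: the determinant $\det m_j(\lambda)=(\lambda^2-1)^2-\beta_j^2$ at $\alpha=1$ vanishes not only at $\lambda^2=j(k-j)/(k-1)$, as the paper's ``if and only if'' asserts, but also at $\lambda^2=2-j(k-j)/(k-1)$, which is strictly positive for instance when $j=2$ or $j=k-2$ (it equals $2/(k-1)$); the paper's proof silently ignores this second family of roots, whereas you show that hypothesis (ii) excludes it as well, since $(\ell/p)^2=2-j(k-j)/(k-1)$ would force $p^2 j(k-j)/(k-1)=2p^2-\ell^2\in\mathbb{N}$. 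Your two further touches --- the observation that the translation directions span $T_1(w_1)$ and $T_{k-1}(w_1)$, so that $(W_j)_{red}=W_j$ for $2\le j\le k-2$, and the remark that after shrinking $\delta$ condition (i) holds automatically near $\alpha=1$ because $p\sqrt{2}\notin\mathbb{N}$ --- are both correct and make explicit what the paper leaves implicit.
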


\begin{proof}
By Proposition \ref{Prop1} and assumption (i), the block $m_{k}(\ell/p)$ is
invertible for $\ell\neq0$ and $m_{k}(0)$ is invertible in an orthogonal
complement to the generator of the orbit. By Proposition \ref{Prop2} and the
assumption that $\alpha>1$, the blocks $m_{j}(\ell/p)$ restricted to $T_{j}^{-1}((W_{j})_{red})$ are invertible for $j=1,k-1$. It remains to show that the blocks $m_{j}(\ell/p)$ are invertible for all
 $j=2,\dots, k-2$ and $\ell\in\mathbb{Z}$.

In the logarithmic case
$\alpha=1$ the numbers $s_{j}$ can be computed explicitly \cite{GaIz12} and are given by
\[
s_{j}=\frac{j(k-j)}{2}\text{.}%
\]
In this case, the coefficients in \eqref{coeff} are $$\alpha_{j}=\gamma_j=0,\qquad \beta_j=\frac{s_j}{s_1}-1.$$ We can compute the matrix \eqref{Ecblock} explicitly and the determinant is
\[
\det\left( m_{j}(\lambda)\right)=(\lambda^{2}-1)^{2}-\beta_{j}^{2}.
\]
Thus $\det\left( m_{j}(\lambda)\right)\neq 0$ if and only
if
\[
\lambda^{2}\neq\frac{s_{j}}{s_{1}}=\frac{j(k-j)}{k-1}.
\]
Therefore, the blocks $m_{j}(\ell/p)$ are invertible if and only if assumption
(ii) holds. Notice that there is $\ell_0>0$ such that the blocks $m_{j}(\ell/p)$ are always invertible for
$\left\vert \ell\right\vert >\ell_{0}$, which reflects the compactness nature of
the operators. By continuity of $m_{j}(\ell/p)$ with respect to $\alpha$, there is a
$\delta>0$ such that for any $\alpha\in(1,1+\delta)$, the remaining blocks
$m_{j}(\ell/p)$ for $\left\vert \ell\right\vert <\ell_{0}$ are invertible for
all $j=2,\dots,k-2$. The result follows.
\end{proof}

For $\alpha\in(1,1+\delta)$ and $k\in\mathbb{N}$ set
\[
\mathcal{C}_{k,\alpha}=\left\{  p\in\mathbb{N}:p\notin(k-1)\mathbb{N}%
\cup(3-\alpha)^{-1/2}\mathbb{N}\right\}  .
\]
For each prime $k-1$ and $p\in\mathcal{C}_{k,\alpha}$ we have that conditions
(i) and (ii) hold. This follows from the fact that $k-1$ does not divide
$p^{2}$ if $p\in\mathcal{C}_{k,\alpha}$, neither $j(k-j)$ for $j=2,...,k-1$
because $k-1$ is prime.

Now, in the case that $\left(  3-\alpha\right)  ^{-1/2}$ is irrational, the
set $\mathcal{C}_{k,\alpha}$ consists of the integers $p$ that are not divided
by $k-1$. While if $\left(  3-\alpha\right)  ^{-1/2}=\mathfrak{p}%
/\mathfrak{q}$ is rational, then $\left(  3-\alpha\right)  ^{-1/2}>1/2$ for
$\alpha\in(1,1+\delta)$, $\mathfrak{p}>1$ and $\mathcal{C}_{k,\alpha}=\left\{  p\in\mathbb{N}:p\notin(k-1)\mathbb{N}%
\cup\mathfrak{p}\mathbb{N}\right\}  $ is the infinite set of integers $p$ that
are not a multiple of $k-1$ or $\mathfrak{p}$. In both cases $\mathcal{C}%
_{k,\alpha}$ is an infinite set. We have the following theorem,

\begin{theorem}
There is a small $\delta>0$ such that for any $\alpha\in(1,1+\delta)$, the
$k$-polygon is $2\pi p$-nondegenerate if $k-1$ is prime and $p$ is chosen from
the infinite set $\mathcal{C}_{k,\alpha}$.
\end{theorem}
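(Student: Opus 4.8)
The plan is to obtain this theorem as an immediate packaging of the preceding Proposition together with the elementary observations on $\mathcal{C}_{k,\alpha}$ already recorded. That Proposition reduces $2\pi p$-nondegeneracy, for $\alpha$ in a right-neighbourhood of $1$, to the two arithmetic conditions (i) $p(3-\alpha)^{1/2}\notin\mathbb{N}$ and (ii) $p^{2}j(k-j)/(k-1)\notin\mathbb{N}$ for $j=2,\dots,k-2$. So it suffices to check that, when $k-1$ is prime and $p\in\mathcal{C}_{k,\alpha}$, both conditions hold, and then to invoke the Proposition to produce the $\delta$.

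First I would dispose of condition (i). By definition, $p\in\mathcal{C}_{k,\alpha}$ forbids $p\in(3-\alpha)^{-1/2}\mathbb{N}$; writing $p=(3-\alpha)^{-1/2}n$ is the same as $p(3-\alpha)^{1/2}=n\in\mathbb{N}$, so the exclusion $p\notin(3-\alpha)^{-1/2}\mathbb{N}$ is literally condition (i). Hence (i) is automatic for every $p\in\mathcal{C}_{k,\alpha}$, with no constraint on $\alpha$ beyond that which defines the set.

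The arithmetic heart is condition (ii), and here I would use the primality of $k-1$. Since $p\notin(k-1)\mathbb{N}$, the prime $k-1$ does not divide $p$, and because a prime divides a square only if it divides the base, $k-1$ does not divide $p^{2}$. For each $j$ with $2\le j\le k-2$ both $j$ and $k-j$ lie strictly between $1$ and $k-1$, so $k-1$ divides neither of them. A prime dividing a product divides one of its factors, whence $k-1$ does not divide $p^{2}j(k-j)$; equivalently $p^{2}j(k-j)/(k-1)\notin\mathbb{N}$, which is exactly (ii). Note that (ii) does not involve $\alpha$ at all, so this step is purely number-theoretic.

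With (i) and (ii) in hand for such a $p$, the Proposition furnishes a $\delta>0$ making the $k$-polygon $2\pi p$-nondegenerate for all $\alpha\in(1,1+\delta)$, and the infinitude of $\mathcal{C}_{k,\alpha}$ has already been established in both the rational and irrational cases for $(3-\alpha)^{-1/2}$. The one point I would be careful about is the order of quantifiers: the $\delta$ delivered by the Proposition depends on $p$ through the cutoff $\ell_{0}$ beyond which the blocks $m_{j}(\ell/p)$ are automatically invertible, and $\ell_{0}$ grows with $p$. Thus the clean reading is to fix $p\in\mathcal{C}_{k,\alpha}$ first and then extract $\delta=\delta(k,p)$; asserting a single $\delta$ valid simultaneously for all $p\in\mathcal{C}_{k,\alpha}$ would require uniform control of $\ell_{0}$ that is not available from the Proposition, so I would either fix $p$ at the outset or restate the conclusion with this dependence made explicit. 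This subtlety, rather than any analytic difficulty, is the only place where I expect care to be needed.
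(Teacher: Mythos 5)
Your proof coincides with the paper's: the theorem there is stated as an immediate consequence of the preceding proposition, with exactly your two observations — that $p\in\mathcal{C}_{k,\alpha}$ gives condition (i) verbatim, and that the prime $k-1$ divides neither $p^{2}$ nor $j(k-j)$ for $j=2,\dots,k-2$, giving condition (ii) — plus the already-recorded infinitude of $\mathcal{C}_{k,\alpha}$ in both the rational and irrational cases for $(3-\alpha)^{-1/2}$. Your closing caveat on the quantifiers is well taken: the paper's proposition fixes $p$ and its continuity argument (through the cutoff $\ell_{0}$, which grows with $p$) only yields $\delta=\delta(k,p)$, so the single $\delta$ in the theorem's phrasing should indeed be read with $p$ fixed first, exactly as you propose.
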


\subsection{The \texorpdfstring{$2\pi/m$}--nondegeneracy property of the \texorpdfstring{$n$}--polygon for
gravitational forces}

In this section we verify the $2\pi/m$-nondegeneracy property in the
gravitational case. Before proving this property, we use computer-assisted
proofs to validate that there are no integers $\ell\in\mathbb{N}$ such that
$\det\left(m_{j}(\ell)\right)=0$ for $j=2, \dots, k-2$.

\begin{proposition}
\label{Prop3}For each $k$ from $4$ to $1000$, the polynomial
\[
P_{j}(\lambda)=\det \left( m_{j}(\lambda)\right)
\]
has no integer roots for $j=2, \dots, k-2$.
\end{proposition}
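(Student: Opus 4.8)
The plan is to turn the statement into a finite, rigorously certifiable computation. First I would write the determinant explicitly. By \eqref{Ecblock} and the normal form of Proposition~\ref{prop: normal form}, the block is
\[
m_{j}(\lambda)=
\begin{pmatrix}
\lambda^{2}+1+\alpha_{j}-\beta_{j} & (2\lambda+\gamma_{j})i\\
-(2\lambda+\gamma_{j})i & \lambda^{2}+1+\alpha_{j}+\beta_{j}
\end{pmatrix},
\]
so that, using $\alpha_{j}\mp\gamma_{j}=\frac{\alpha-1}{2s_{1}}s_{j\mp1}$ from \eqref{coeff},
\[
P_{j}(\lambda)=\bigl(\lambda^{2}+1+\alpha_{j}\bigr)^{2}-\bigl(2\lambda+\gamma_{j}\bigr)^{2}-\beta_{j}^{2}
=\Bigl((\lambda-1)^{2}+\tfrac{\alpha-1}{2s_{1}}s_{j-1}\Bigr)\Bigl((\lambda+1)^{2}+\tfrac{\alpha-1}{2s_{1}}s_{j+1}\Bigr)-\beta_{j}^{2}.
\]
This is a real quartic with positive leading coefficient whose coefficients, at $\alpha=2$, are explicit rational combinations of the four trigonometric sums $s_{1},s_{j-1},s_{j},s_{j+1}$.

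Second, I would confine the integer roots a priori. For $2\le j\le k-2$ both additive constants $\frac{\alpha-1}{2s_{1}}s_{j\pm1}$ are positive, so the product in the factored form is at least $(\lambda^{2}-1)^{2}$; hence any real root obeys $(\lambda^{2}-1)^{2}\le\beta_{j}^{2}$, i.e.\ $|\lambda|\le\sqrt{1+\beta_{j}}$. The known growth of the $s_{j}$ gives $\beta_{j}=O(k^{2}/\log k)$, so the candidate integers satisfy $|\ell|\le\ell_{0}(k,j)=\lfloor\sqrt{1+\beta_{j}}\rfloor=O(k)$. Thus, for each of the finitely many pairs $(k,j)$ in range, only finitely many integers $\ell$ need be tested.

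Third, for each admissible triple $(k,j,\ell)$ I would certify $P_{j}(\ell)\neq0$ by rigorous interval arithmetic. Each $s_{j}$ is a finite sum of values of $\sin$ at rational multiples of $\pi$; one computes guaranteed enclosures of $s_{1},s_{j-1},s_{j},s_{j+1}$, propagates them through \eqref{coeff} to enclosures of $\alpha_{j},\beta_{j},\gamma_{j}$, and then through the quartic to an enclosure of $P_{j}(\ell)$, declaring success when this interval excludes $0$. A more economical variant replaces the loop over $\ell$ by rigorous isolation of the at most four real roots of $P_{j}$ together with a check that each lies at positive distance from $\mathbb{Z}$.

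The difficulty here is one of rigor and scale rather than of ideas. Because the $s_{j}$ are irrational, no enclosure can certify an exact zero—only separation from zero—so the computation must use certified arithmetic and increase the working precision adaptively whenever an enclosure of $P_{j}(\ell)$ straddles $0$; one must also manage the roughly $\sum_{k}O(k^{2})$ evaluations needed up to $k=1000$. A conceptually cleaner but computationally heavier alternative is to note that each $\sin(l\pi/k)$ is algebraic, so $P_{j}(\ell)$ lies in a cyclotomic field and its nonvanishing can be decided by exact symbolic arithmetic.
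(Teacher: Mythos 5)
Your proposal follows the paper's proof essentially step for step: the same factorization $P_{j}(\lambda)=\bigl((\lambda-1)^{2}+\alpha_{j}-\gamma_{j}\bigr)\bigl((\lambda+1)^{2}+\alpha_{j}+\gamma_{j}\bigr)-\beta_{j}^{2}$ with the observation that at $\alpha=2$ the constants $\alpha_{j}\mp\gamma_{j}=s_{j\mp1}/(2s_{1})$ are nonnegative, the same resulting bound $P_{j}(\lambda)\geq(\lambda^{2}-1)^{2}-\beta_{j}^{2}$ confining integer roots to a finite range (the paper checks $\ell=0,\dots,\sqrt{\beta_{j}^{2}+1}$), and the same certification of the finite list by rigorous interval arithmetic (the paper uses INTLAB in MATLAB). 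Your remarks on adaptive precision and the exact cyclotomic-field alternative go beyond what the paper records, but the core argument is identical.
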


\begin{proof}
When $\alpha=2$, we get by \eqref{coeff}
$$\alpha_j-\gamma_j=\frac{s_{j-1}}{2s_1}\geq 0\qquad \alpha_j+\gamma_j=\frac{s_{j+1}}{2s_1}\geq 0.$$
Since
\begin{align*}
P_{j}(\lambda)  & =(\left(  \lambda-1\right)  ^{2}+\alpha_{j}-\gamma
_{j})(\left(  \lambda+1\right)  ^{2}+\alpha_{j}+\gamma_{j})-\beta_{j}^{2}\\
& \geq\left(  \lambda-1\right)  ^{2}\left(  \lambda+1\right)  ^{2}-\beta
_{j}^{2}\geq\left(  \lambda^{2}-1\right)  ^{2}-\beta_{j}^{2}%
\end{align*}
The polynomial has not roots for $\lambda^{2}-1\geq\beta_{j}^{2}$. The result
is obtained by validating rigorously, using interval arithmetics in the package INTLAB in
MATLAB, that $P_{j}(\ell)\neq0$ for $\ell=0,\dots,\sqrt{\beta_{j}^{2}+1}$ and
$j=2, \dots, k-2$.
\end{proof}

\begin{theorem}
For $\alpha=2$, the $k$-polygon is $2\pi/m$-nondegenerate with $m>1$ for any
$k=4,\dots,1000$. In addition, there is a small $\delta>0$ such that for all
$\alpha\in(2-\delta,2+\delta)/\{2\}$, the $k$-polygon is $2\pi$-nondegenerate
for any $k=4,\dots,1000$.
\end{theorem}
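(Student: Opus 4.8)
The plan is to reduce both claims to the invertibility of the blocks $m_{j}(\lambda)$ of \eqref{Ecblock} at integer arguments $\lambda=\ell/p$, using the isotypic block-diagonalization $P^{-1}M_{a}(\lambda)P=m_{1}(\lambda)\oplus\dots\oplus m_{k}(\lambda)$ together with the three regimes already isolated: the blocks $j=1,k-1,k$, which carry the translation and rotation directions and are controlled by Lemmas \ref{Prop1} and \ref{Prop2}, and the generic blocks $j=2,\dots,k-2$, which sit entirely inside $(E_{red}^{k})^{\mathbb{C}}$ and are controlled by Proposition \ref{Prop3}. Recall that $2\pi p$-nondegeneracy is conditions (a)--(b) for $\hat{T}_{\ell,u}=(\ell^{2}+1)^{-1}P_{E_{red}^{k}}M_{a}(\ell/p)|_{E_{red}^{k}}$, and that the $2\pi/m$-condition is the same with $p=1$ but only the modes $\ell\in m\mathbb{Z}$ tested.

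For the first claim I fix $\alpha=2$, $p=1$ and $m\geq2$, so the modes to check are $\ell\in m\mathbb{Z}\setminus\{0\}$, all with $|\ell|\geq2$. For $j=k$, the proof of Lemma \ref{Prop1} gives eigenvalues $\mu_{k}^{\pm}(\lambda)$ with $\mu_{k}^{+}$ never zero and $\mu_{k}^{-}$ vanishing only on $\{0,\pm\sqrt{3-\alpha}\}=\{0,\pm1\}$ by evenness; since $|\ell|\geq2$ skips $\pm1$, each $m_{k}(\ell)$ is invertible. For $j=1,k-1$, Lemma \ref{Prop2} applies because $\alpha=2>1$, so the restrictions to $T_{j}^{-1}((W_{j})_{red})$ are invertible for every $\lambda$. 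For $j=2,\dots,k-2$, Proposition \ref{Prop3} gives $P_{j}(\ell)=\det m_{j}(\ell)\neq0$ for every integer $\ell$ and every $k=4,\dots,1000$, so these full blocks are invertible at all $\ell\in m\mathbb{Z}$. At $\ell=0$ one has $m_{k}(0)=\mbox{diag}(\alpha+1,0)$, whose single kernel direction $e_{2}$ yields $T_{k}(e_{2})\propto\mathcal{J}_{k}a$, while $m_{1}(0),m_{k-1}(0)$ reduce to $1+2\alpha_{1}>0$ and $P_{j}(0)\neq0$; hence the kernel of $\hat{T}_{0,u}$ on the reduced space is exactly $\mbox{span}(\mathcal{J}_{k}a)$, giving (b). This is $2\pi/m$-nondegeneracy.

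For the second claim I take $p=1$ and all integers $\ell\neq0$. The only obstruction at $\alpha=2$ is the elliptic resonance $\mu_{k}^{-}(\pm1)=0$ coming from $\sqrt{3-\alpha}=1$. For $\alpha\in(2-\delta,2+\delta)\setminus\{2\}$ with $\delta$ small, $3-\alpha\in(1-\delta,1+\delta)\setminus\{1\}$ so $\sqrt{3-\alpha}\notin\mathbb{N}$, and Lemma \ref{Prop1}(1) now makes $m_{k}(\ell)$ invertible for all $\ell\neq0$; Lemma \ref{Prop2} still applies since $\alpha>1$, and the $\ell=0$ kernel computation (with $m_{k}(0)=\mbox{diag}(\alpha+1,0)$ for every $\alpha$) is unchanged, so (b) persists. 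What remains is to upgrade Proposition \ref{Prop3}: I must show $P_{j}(\ell)\neq0$ for all $\ell\in\mathbb{Z}$, all $j=2,\dots,k-2$ and all $k=4,\dots,1000$ once $\alpha$ is close enough to $2$.

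The hard part is exactly this uniform perturbation, and I would settle it by compactness in both $\lambda$ and $\alpha$. Each $P_{j}(\lambda)=(\lambda^{2}+1+\alpha_{j})^{2}-(2\lambda+\gamma_{j})^{2}-\beta_{j}^{2}$ is a monic quartic in $\lambda$ whose coefficients \eqref{coeff} depend continuously on $\alpha$; over the compact set $\alpha\in[2-\delta_{0},2+\delta_{0}]$ and the finitely many pairs $(k,j)$ with $4\leq k\leq1000$, $2\leq j\leq k-2$, all real roots stay in a fixed bounded interval, so there is one integer $\ell_{0}$ with $P_{j}(\ell;\alpha)\neq0$ whenever $|\ell|>\ell_{0}$ (this is the compactness built into the operator). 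Only the finite set of triples $(k,j,\ell)$ with $|\ell|\leq\ell_{0}$ then survives; for each of them $P_{j}(\ell;2)\neq0$ by Proposition \ref{Prop3}, so continuity of $\alpha\mapsto P_{j}(\ell;\alpha)$ gives a neighbourhood of $2$ on which it stays nonzero, and taking $\delta$ to be the minimum over this finite collection produces a single $\delta$ valid for all $k,j,\ell$. Assembling the four ingredients gives $2\pi$-nondegeneracy on $(2-\delta,2+\delta)\setminus\{2\}$, and the exclusion of $\alpha=2$ is forced precisely by the $m_{k}(\pm1)$ resonance.
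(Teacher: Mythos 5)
Your proposal is correct and follows essentially the same route as the paper: the same isotypic splitting into the blocks $m_k$ (resonant only at $\ell=\pm\sqrt{3-\alpha}=\pm1$ when $\alpha=2$, hence avoided by $\ell\in m\mathbb{Z}$, $m\geq2$), the blocks $m_1,m_{k-1}$ handled on $T_j^{-1}((W_j)_{red})$ via Lemma \ref{Prop2}, and the blocks $m_j$, $j=2,\dots,k-2$, via the computer-assisted Proposition \ref{Prop3} plus continuity in $\alpha$. Your explicit compactness argument (uniform root bound for the monic quartics $P_j$ giving a single $\ell_0$, then continuity over the finite set of triples $(k,j,\ell)$) and your explicit $\ell=0$ kernel verification merely spell out details the paper's proof leaves implicit.
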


\begin{proof}
By Proposition \ref{Prop1} and the fact that $\sqrt{3-\alpha}$ is integer only
if $\alpha=2$ with $\sqrt{3-\alpha}=1$, we have that the block $m_{k}(\ell)$
is fine for $\ell\in m\mathbb{Z}$ if $\alpha=2$ and for $\ell\in\mathbb{Z}$ if
$\alpha\in(2-\delta,2+\delta)\setminus \{2\}$. The blocks $m_{j}(\ell
)$ restricted to ${T_{j}^{-1}((W_{j})_{red})}$ are always invertible for
$j=1,k-1$ by Proposition \ref{Prop2}. Proposition \ref{Prop3} and the
continuity respect $\alpha$ imply that the blocks $m_{j}(\ell)$ are invertible
for all $\alpha\in(2-\delta,2+\delta)$, $j=2,\dots,n-2$ and $\ell\in
\mathbb{Z}$. The result follows.
\end{proof}

\subsection{The \texorpdfstring{$2\pi p$}--nondegeneracy property of the Lagrange triangle for
different masses}

Since the conditions of Remark \ref{Rem} for the Hamiltonian is equivalent
to the $2\pi p$--nondegeneracy property, we can use previous computations
made for the analysis of the stability of the Lagrange triangular configuration.
In the case $\alpha =2$, according to the Gascheau result regarding the
$3$-body problem \cite{roberts}, the linear hamiltonian system at the
Lagrange triangular configuration with masses $m_{j}$ has four pairs of
zero-eigenvalues and four complex roots off the imaginary axis (leading to
instability) when 
\begin{equation*}
\beta =27\frac{m_{1}m_{2}+m_{1}m_{3}+m_{2}m_{3}}{\left(
m_{1}+m_{2}+m_{3}\right) ^{2}}>1\text{.}
\end{equation*}%
The four pairs of zero-eigenvalues correspond to the center of mass and the
homographic elliptic orbits. The linearization of the homographic elliptic
orbits leads to our block $m_{k}(\lambda )$. Therefore, the Lagrange
triangular configuration is $2\pi /m$-nondegenerate for any $m>1$ when $%
\beta >1$. 

In the case $\alpha \neq 2$, after fixing the center of mass equal to zero,
there are four pairs of non-zero eigenvalues. Two pairs correspond to the
linearization in the submanifold of homographic solutions having the block
of the Kepler problem in our analysis $m_{k}(\lambda )$. This block is $2\pi
p$-nonresonant when $p\notin (3-\alpha )^{-1/2}\mathbb{N}$. The other two
pairs of eigenvalues can be found explicitly in \cite{Port}, and correspond
in our setting to
\begin{eqnarray*}
\lambda _{1}^{\pm } &=&\pm \frac{1}{6}i\sqrt{18(1-\alpha )+6\sqrt{9(\alpha
-1)^{2}-\beta (\alpha +3)^{2}}} \\
\lambda _{2}^{\pm } &=&\pm \frac{1}{6}i\sqrt{18(1-\alpha )-6\sqrt{9(\alpha
-1)^{2}-\beta (\alpha +3)^{2}}}.
\end{eqnarray*}%
These eigenvalues are off the imaginary axis when 
\begin{equation}
\beta >9\left( \frac{3-\alpha }{1+\alpha }\right) ^{2}.  \label{routh}
\end{equation}%
Therefore, we conclude that for $\alpha \neq 2$, the Lagrange triangular
configuration is $2\pi p$-nondegenerate when the inequalities $p\notin
(3-\alpha )^{-1/2}\mathbb{N}$ and (\ref{routh}) hold. Furthermore, since the
eigenvalues are analytic in $\alpha $ and $\beta $, the Lagrange triangular
configuration is generically $2\pi p$-nondegenerate for the homogeneous
exponent $\alpha $ and the set of masses $m_{j}$. Observe that for equal
masses $m_{j}=1$ the inequality (\ref{routh}) does not hold precisely for $%
\alpha =1$. This is the degeneracy found in the previous analysis for the
blocks $m_{1}$ and $m_{n-1}$ in the case of $\alpha =1$. 

\paragraph{Acknowledgements.}

We acknowledge the assistance of Ramiro Chavez Tovar with the preparation of
the figures. M. Fontaine is
supported by the FWO-EoS project G0H4518N. C. Garc\'{\i}a-Azpeitia is
supported by PAPIIT-UNAM grant  IA100121.

\vspace{0.5cm} 
\begin{minipage}[t]{7cm}
MF:  {marine.fontaine.math@gmail.com}\\
{\tt Departement Wiskunde\\
Universiteit Antwerpen \\
2020 Antwerpen, BE.}

\end{minipage}
\hfill 
\begin{minipage}[t]{10cm}
CGA:  {cgazpe@mym.iimas.unam.mx}\\
{\tt Depto. Matem\'{a}ticas y Mec\'{a}nica IIMAS \\
Universidad Aut\'onoma de M\'exico \\
Apdo. Postal 20-726, Ciudad de M\'exico, MX.}
\end{minipage}
\end{document}